\newcommand{\spec}{\mathrm{Spec}}
\newcommand{\ord}{\mathrm{ord}}
\newcommand{\h}{\;\#\;}
\newcommand{\posint}{\mathbb{Z^+}}
\newcommand{\ideal}[1]{\langle #1\rangle}
\newcommand{\servar}[1]{\langle\langle #1 \rangle\rangle}
\newtheorem{thm}{Theorem}[section]
\newtheorem{lem}[thm]{Lemma}
\newtheorem{cor}[thm]{Corollary}
\theoremstyle{definition}
\newtheorem{defn}[thm]{Definition}
\newtheorem{exmp}{Example}[section]
\title[Dynamic Newton--Puiseux Theorem] {Dynamic Newton--Puiseux Theorem}
\author[B Mannaa]{Bassel Mannaa}
\address{Department of Computer Science and Engineering\\
University of Gothenburg\\\newline
SE-412 96 \\ Gothenburg \\ Sweden}
\email{bassel.mannaa@cse.gu.se}
\urladdr{http://www.cse.chalmers.se/~bassel}
\author[T Coquand]{Thierry Coquand}
\email{thierry.coquand@cse.gu.se}
\address{Department of Computer Science and Engineering\\
University of Gothenburg\\\newline
SE-412 96 \\ Gothenburg \\ Sweden}
\urladdr{http://www.cse.chalmers.se/~coquand}
\keywords{Puiseux expansion, Algebraic curve, Constructive algebra}
\begin{document}

\begin{abstract}
A constructive version of Newton--Puiseux theorem for computing the Puiseux expansions of algebraic curves is presented. The proof is based on a classical proof by Abhyankar. Algebraic numbers are evaluated dynamically; hence the base field need not be algebraically closed and a factorization algorithm of polynomials over the base field is not needed. The extensions obtained are a type of regular algebras over the base field and the expansions are given as formal power series over these algebras.
\end{abstract}

\maketitle
\section*{Introduction}
Newton--Puiseux Theorem states that, for an algebraically closed field $K$ of zero characteristic, given a polynomial $F \in K[[X]][Y]$ there exist a positive integer $m$ and a factorization $F = \prod_{i=1}^n (Y - \eta_i)$ where each $\eta_i \in K[[X^{1/m}]][Y]$. These roots $\eta_i$ are called the \emph{Puiseux expansions} of $F$. The theorem was first proved by Newton \cite{newton} with the use of Newton polygon. Later, Puiseux \cite{Puiseux_recherchessur} gave an analytic proof. 
It is usually stated as: 
\emph{The field of fractional power series \footnote{Also known as the field of Puiseux series.}, i.e. the field $K\servar{X}=\bigcup_{m\in\posint} K((X^{1/m}))$, is algebraically closed} \cite{walker}. Abhyankar
\cite{Abhyankar90algebraicgeometry} presents another proof of this result, the
``Shreedharacharya's Proof of Newton's Theorem''. This proof is not constructive as it stands. Indeed it assumes decidable equality on the ring $K[[X]]$ of power series over a field, but given two arbitrary power series we cannot decide whether they are equal in \emph{finite} number of steps. We explain in this paper how to modify
his argument by adding a separability assumption to provide a constructive proof of the result:
The field of fractional power series is \emph{separably} algebraically closed.
In particular, the termination of Newton--Puiseux algorithm is justified constructively in this case.
This termination is justified by a non constructive reasoning in most references \cite{walker, duval2,Abhyankar90algebraicgeometry}, with 
the exception of \cite{edwards2005essays} (For an introduction to constructive algebra, see \cite{Mines,lombardi_book}). 
Following that, we show that the field of fractional power series algebraic over $K(X)$ is algebraically closed.

 Another contribution of this paper is to analyze in a constructive framework what happens if the field $K$ is 
not supposed to be algebraically closed.
The difference with \cite{edwards2005essays}, which provides also such an analysis, is that we do not assume the 
irreducibility of polynomials to be decidable.
This is achieved through the method of \emph{dynamic evaluation} \cite{duval0}, which replaces factorization by gcd computations. The reference  
\cite{Coste2001203} provides a proof theoretic analysis of this method. \\
With dynamic evaluation we obtain algebras, \emph{triangular separable algebras}, as separable extensions of the base field and the Puiseux expansions are given over these algebras. Theorem \ref{dynamicnewton-extended} shows that the extensions produced by the algorithm are minimal in the sense that if $R$ is one such extension and $A$ is any other algebra over the base field such that $F(X,Y)$ factors linearly over $A[[X^{1/r}]]$ for some positive integer $r$, then $A$ splits $R$ which in case $A$ and $R$ were fields would be equivalent to saying that $A$ contains the normal closure of $R$. But this then shows that $R$ splits itself, which in case $R$ is a field is equivalent to saying that $R$ is a normal extension (Corollary \ref{spliteachother}). Theorem \ref{dynamicnormal2} will then show that any two triangular separable algebras $A$ and $B$ that split each other are in fact powers of a some triangular separable algebra, i.e. $A \cong R^m$ and $B \cong R^n$ for some triangular separable algebra $R$ and positive integers $m,n$.

This algorithm gives less information than Duval's rational Puiseux expansion algorithm \cite{duval2} since we can easily obtain the classical Puiseux  expansions of a polynomial from the rational ones (Rational Puiseux expansions describe the roots of the polynomial by pairs of power series, i.e. a parametrization, with rational coefficients). In \cite{duval2} the rational expansions of a polynomial $F(X,Y) \in K[X,Y]$ are given as long as $F(X,Y)$ is absolutely irreducible, i.e. irreducible in $\bar{K}[X,Y]$, where $\bar{K}$ is the algebraic closure of $K$. It would be interesting to also justify Duval's algorithm in a constructive framework. 

\section{A constructive version of Abhyankar's Proof}

 We recall that a (discrete) field is defined to be a non trivial ring in which any element is $0$ or invertible. For a ring $R$, the formal power series ring $R[[X]]$ is the set of sequences $\alpha=\alpha(0) + \alpha(1) X + \alpha(2) X^2+...$, with $\alpha(i) \in R$ \cite{Mines}. 

An \emph{apartness} relation $\#$ on a set is a symmetric relation satisfying $x \h y \rightarrow x \h z \lor y \h z$ and $\neg x \h x$. An apartness is tight if it satisfies $\neg x \h y \rightarrow x = y$. In addition to the ring identities, a ring with apartness satisfies $x_1 + y_1 \h x_2 + y_2 \rightarrow x_1 \h x_2 \lor y_1 \h y_2$, $x_1 y_1 \h x_2 y_2 \rightarrow x_1 \h x_2 \lor y_1 \h y_2$ and $0\h1$, see \cite{Mines, vandalen}.

Next we define the apartness relation on power series as in \cite[Ch 8] {vandalen}.
\begin{defn}
\label{apartd}
Let $R$ be a ring with apartness. For $\alpha, \beta \in R[[X]]$ we define $\alpha \h \beta$ if $\exists n\; \alpha(n) \h \beta(n)$.  
\end{defn}

The relation $\#$ as defined above is an apartness relation and makes $R[[X]]$ into a ring with apartness \cite{vandalen}. This definition of $\#$ applies to the ring of polynomials $R[X] \subset R[[X]]$.

We note that, if $K$ is a discrete field then for $\alpha \in K[[X]]$ we have $\alpha \h 0$ iff $\alpha(j)$ is invertible for some $j$. For $F = \alpha_0 Y^n + ...+ \alpha_n \in K[[X]][Y]$, we have $F \h 0$ iff $\alpha_i(j)$ is invertible for some $j$ and $0\leq i\leq n$.

 Let $R$ be a commutative ring with apartness. Then $R$ is an \emph{integral domain} if it satisfies $x \h 0 \land y \h 0 \rightarrow  xy \h 0$ for all $x,y \in R$. A \emph{Heyting} field is an integral domain satisfying $x \h 0 \rightarrow \exists y\; xy = 1$. The Heyting field of fractions of $R$ is the Heyting field obtained by inverting the elements $c \h 0$ in $R$ and taking the quotient by the appropriate equivalence relation, see \cite [Ch 8,Theorem 3.12] {vandalen}. For $a$ and $b \h 0$ in R we have $a/b \h 0$ iff $a \h 0$.
  
For a discrete field $K$, an element $\alpha \h 0$ in $K[[X]]$ can be written as $X^m \sum_{i \in \mathbb{N}} a_i X^i$ with $m \in \mathbb{N}$ and $a_0 \neq 0$. It follows that the ring $K[[X]]$ is an integral domain. If $a_0 \neq 0$ we have that $\sum_{i \in \mathbb{N}} a_i X^i$ is invertible in $K[[X]]$. We denote by $K((X))$, the Heyting field of fractions of $K[[X]]$, we also call it the Heyting field of Laurent series over $K$. Thus an element apart from $0$ in $K((X))$ can be written as $X^n \sum_{i \in \mathbb{N}} a_i X^i$ with $a_0 \neq 0$ and $n \in \mathbb{Z}$, i.e. as a series where finitely many terms have negative exponents. 
 
 Unless otherwise qualified, in what follows, a field will always denote a discrete field.
  
\begin{defn}[Separable polynomial]
Let $R$ be a ring. A polynomial $p \in R[X]$ is separable if there exist $r,s\in R[X]$ such that $r p + s p' = 1$, where $p' \in R[X]$ is the derivative of $p$.
\end{defn}
\begin{lem}
\label{sepdivisorsep}
Let $R$ be a ring and $p \in R[X]$ separable. If $p = f g$ then both $f$ and $g$ are separable.
\end{lem}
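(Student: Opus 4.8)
The plan is to unwind the definition of separability directly: we are given a Bézout-type identity $rp + sp' = 1$ with $r,s \in R[X]$, and we must manufacture a similar identity for $f$ (the case of $g$ being symmetric). The only structural fact needed is the Leibniz rule for the formal derivative on $R[X]$, namely $p' = (fg)' = f'g + fg'$, which holds over any commutative ring.

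First I would substitute $p = fg$ and $p' = f'g + fg'$ into $rp + sp' = 1$, obtaining $rfg + s(f'g + fg') = 1$. Then I would regroup the left-hand side according to which factor is visible: the terms $rfg$ and $sfg'$ are multiples of $f$, while $sf'g$ is a multiple of $f'$. Concretely, $f\bigl(rg + sg'\bigr) + f'\bigl(sg\bigr) = 1$. Reading off $a := rg + sg' \in R[X]$ and $b := sg \in R[X]$ gives $af + bf' = 1$, which is exactly the statement that $f$ is separable.

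For $g$, the same computation with the roles of $f$ and $g$ interchanged (using $p' = g'f + gf'$) yields $g\bigl(rf + sf'\bigr) + g'\bigl(sf\bigr) = 1$, so $g$ is separable with witnesses $rf + sf'$ and $sf$. This completes the argument.

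I do not expect any real obstacle here: the proof is a one-line rearrangement once the Leibniz rule is in hand, and it requires no hypotheses on $R$ beyond being a commutative ring (in particular no integrality, no discreteness of the field, and no restriction on characteristic). The only point worth stating carefully is that the witnesses produced are genuinely polynomials over $R$, which is immediate since $r,s,f,g,f',g'$ all lie in $R[X]$ and $R[X]$ is closed under the ring operations.
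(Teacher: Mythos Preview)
Your proof is correct and is essentially identical to the paper's: both substitute $p=fg$ and $p'=f'g+fg'$ into the identity $rp+sp'=1$ and regroup to read off the separability witnesses for each factor. The only cosmetic difference is that the paper writes out the case of $g$ first (obtaining $(rf+sf')g + sf\,g' = 1$) and says ``similarly for $f$'', whereas you do $f$ first.
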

\begin{proof}
Let $r p + s p' = 1$ for $r,s \in R[X]$. Then $r f g + s (fg' + f' g) = (rf + s f') g + sf g' = 1$, thus $g$ is separable. Similarly for $f$.
\end{proof}

\begin{lem}
\label{separablevarchangepersist}
Let $R$ be a ring. If $p(X) \in R[X]$ is separable and $u \in R$ a unit then $p(u Y) \in R[Y]$ is separable.
\end{lem}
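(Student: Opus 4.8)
The plan is to use the Bézout-style characterization of separability directly and transport it along the substitution $X \mapsto uY$. By hypothesis there are $r,s \in R[X]$ with $rp + sp' = 1$. I would apply the $R$-algebra endomorphism $\varphi\colon R[X] \to R[Y]$ sending $X$ to $uY$ to this identity, obtaining $r(uY)\,p(uY) + s(uY)\,p'(uY) = 1$ in $R[Y]$.

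The one point that needs care is relating $p'(uY)$ to the formal derivative of $q(Y) := p(uY)$. First I would record the chain-rule identity for formal derivatives: if $q(Y) = p(uY)$ with $u$ a constant, then $q'(Y) = u\,p'(uY)$; this is immediate by $R$-linearity from the monomial case $p = X^k$, where $q = u^k Y^k$ and $q' = k u^k Y^{k-1} = u\,(k u^{k-1} Y^{k-1})$. Since $u$ is a unit, this gives $p'(uY) = u^{-1} q'(Y)$.

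Substituting back yields $r(uY)\,q(Y) + \bigl(u^{-1} s(uY)\bigr)\,q'(Y) = 1$. Setting $\tilde r(Y) = r(uY) \in R[Y]$ and $\tilde s(Y) = u^{-1} s(uY) \in R[Y]$ (well-defined since $u$ is invertible in $R$), we get $\tilde r\, q + \tilde s\, q' = 1$, so $q = p(uY)$ is separable. I do not expect any real obstacle here; the only thing worth stating explicitly is the formal chain rule, and the rest is a direct substitution.
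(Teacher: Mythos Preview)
Your proof is correct and is exactly the natural verification. The paper states this lemma without proof, so there is nothing to compare against; your argument---substituting $X\mapsto uY$ in the B\'ezout identity $rp+sp'=1$ and using the formal chain rule $q'(Y)=u\,p'(uY)$ together with invertibility of $u$---is precisely the expected one-line justification the authors omitted.
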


The following result is usually proved with the assumption of existence of a decomposition into irreducible
factors. We give a proof without this assumption. It works over a field of any characteristic.

\begin{lem}
\label{squarefree}
Let $f$ be a monic polynomial in $K[X]$ where $K$ is a field. If $f'$ is the derivative of $f$ 
and $g$ monic is the gcd of $f$ and $f'$ then writing $f = hg$ we have that $h$ is separable.
We call $h$ the separable associate of $f$.
\end{lem}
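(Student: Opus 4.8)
It suffices to prove $\gcd(h,h')=1$, for then the Euclidean algorithm produces $r,s\in K[X]$ with $rh+sh'=1$.

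The engine is a degree descent. I claim that if $d$ is monic and either $d^{2}\mid h$, or $d\mid h$ with $d'=0$, then $d=1$. First, such a $d$ divides $g$: since $d\mid h\mid f$ we have $d\mid f$, and $d$ also divides $h'$ (write $h=d^{2}\cdot(h/d^{2})$, or $h=d\cdot(h/d)$ when $d'=0$, and differentiate), hence $d$ divides $f'=g'h+gh'$; so $d\mid\gcd(f,f')=g$. Now suppose $d^{k}\mid g$ with $k\ge 1$. Using $q^{k}\mid p\Rightarrow q^{k-1}\mid p'$ (and $q^{k}\mid p\Rightarrow q^{k}\mid p'$ when $q'=0$) one checks that $d^{k+1}$ divides both $g'h$ and $gh'$, hence $d^{k+1}\mid f'$; since also $d^{k+1}\mid f$ and $g=\gcd(f,f')$, we get $d^{k+1}\mid g$. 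Iterating, $d^{n}\mid g$ for every $n$, which is impossible when $\deg d\ge 1$ because $g\ne 0$. Hence $d=1$.

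Two consequences: from the first case, $h$ has no nonconstant square factor; from the second, no nonconstant $d$ with $d'=0$ divides $h$. To conclude, set $P=\gcd(h,h')$ and $Q=h/P$. Then $c:=\gcd(P,Q)$ satisfies $c^{2}\mid PQ=h$, so $c=1$, i.e.\ $\gcd(P,Q)=1$. Differentiating $h=PQ$ gives $h'=P'Q+PQ'$, and since $P\mid h'$ and $P\mid PQ'$ we get $P\mid P'Q$; coprimality of $P$ and $Q$ then forces $P\mid P'$, so $P'=0$ (a nonzero multiple of $P$ cannot have degree less than $\deg P$). But $P\mid h$ and $P'=0$, so by the second consequence $P=1$. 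Therefore $\gcd(h,h')=1$, i.e.\ $h$ is separable.

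The delicate point, and the crux, is the descent in positive characteristic: the divisibility bookkeeping $d^{k}\mid g\Rightarrow d^{k+1}\mid g'h,\,gh'$ must be checked to survive the possibilities $d'=0$ and $f'=0$, and it is genuine squarefreeness of $h$ that is used for $\gcd(P,Q)=1$, not the sharper characteristic-zero fact that $h$ is the product of the distinct irreducible factors of $f$. Notably, no factorization into irreducibles and no assumption on $K$ beyond being a discrete field enters; everything reduces to gcd computations together with the degree bound on $g$.
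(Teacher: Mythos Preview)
Your proof is correct and shares the central engine with the paper's---the ``degree descent'' showing that a suitable divisor $d$ satisfies $d^{n}\mid g$ for all $n$ and hence must be a unit---but the packaging is genuinely different. The paper goes straight for $a=\gcd(h,h')$ and runs the descent on $a$ itself; to control the derivative loss in the induction step it introduces the auxiliary $d=\gcd(a,a')$, writes $a=l_{2}d$, $a'=m_{2}d$, and proves the key divisibility $l_{2}\mid l_{1}$ (where $h=l_{1}a$), which yields $a^{n}\mid p\Rightarrow a^{n+1}\mid hp'$. You instead isolate two simpler hypotheses on $d$ (either $d^{2}\mid h$, or $d\mid h$ with $d'=0$) under which the descent runs without any auxiliary gcd, and then assemble the conclusion in two moves: squarefreeness of $h$ gives $\gcd(P,Q)=1$ for $P=\gcd(h,h')$, $Q=h/P$, whence $P\mid P'$ and $P'=0$; the second hypothesis then forces $P=1$. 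Your route avoids the paper's $\gcd(a,a')$ trick at the cost of a two-stage finish, and it makes the positive-characteristic bookkeeping (the case $d'=0$) quite transparent. Both arguments stay within gcd computations over a discrete field, with no appeal to irreducible factorization.
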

\begin{proof} Let $a$ be the gcd of $h$ and $h'$. We have $h = l_1 a$.
Let $d$ be the gcd of $a$ and $a'$. We have $a = l_2 d$ and $a' = m_2 d$, with $l_2$ and
$m_2$ coprime.

 The polynomial $a$ divides $h' = l_1a' + l_1'a$ and hence that $a = l_2 d$ divides
$l_1 a' = l_1 m_2 d$. It follows that $l_2$ divides $l_1m_2$ and since $l_2$ and $m_2$
are coprime, that $l_2$ divides $l_1$.

 Also, if $a^n$ divides $p$ then $p = q a^n$ and $p' = q'a^n + nqa'a^{n-1}$. Hence
$d a^{n-1}$ divides $p'$. Since $l_2$ divides $l_1$, this implies 
that $a^n = l_2 d a^{n-1}$ divides $l_1 p'$. So $a^{n+1}$ divides $al_1 p' = h p'$.

 Since $a$ divides $f$ and $f'$, $a$ divides $g$. We show that $a^n$ divides $g$ for all $n$ by induction on $n$. If $a^n$ divides $g$
we have just seen that $a^{n+1}$ divides $g'h$. Also $a^{n+1}$ divides $h'g$ since $a$ divides $h'$.
So $a^{n+1}$ divides $g'h+h'g = f'$. On the other hand, $a^{n+1}$ divides $f = hg = l_1ag$.
So $a^{n+1}$ divides $g$ which is the gcd of $f$ and $f'$. 

 This implies that $a$ is a unit.
\end{proof}

 If $F$ is in $R[[X]][Y]$ we let $F_Y$ be the derivative of $F$ with respect to $Y$. 

\begin{lem} 
\label{apartness}
Let $K$ be a field and let $F  = \sum_{i = 0}^n \alpha_i Y^{n-i}\in K[[X]][Y]$ be separable over $K((X))$, then $\alpha_n \h 0 \lor \alpha_{n-1} \h 0$
\end{lem}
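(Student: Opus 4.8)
The plan is to specialise the separability identity at $Y = 0$. By hypothesis $F$ is separable as an element of $K((X))[Y]$, so there exist $r, s \in K((X))[Y]$ with $rF + sF_Y = 1$. The substitution $Y \mapsto 0$ is a ring homomorphism $K((X))[Y]\to K((X))$, and from the indexing $F = \sum_{i=0}^{n}\alpha_i Y^{n-i}$ one reads off that the two lowest-degree coefficients are $F(0) = \alpha_n$ (the term $i=n$) and $F_Y(0) = \alpha_{n-1}$ (the term $i=n-1$ of $F_Y = \sum_{i=0}^{n-1}(n-i)\alpha_i Y^{n-i-1}$). Applying the homomorphism to the Bézout identity therefore gives $r(0)\,\alpha_n + s(0)\,\alpha_{n-1} = 1$ in $K((X))$.

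Next I would extract the required disjunction using only the axioms of a ring with apartness. Since $0 \h 1$ holds in the Heyting field $K((X))$, the left-hand side is apart from $0$, so the sum-apartness axiom (taking both summands on the right to be $0$) yields $r(0)\alpha_n \h 0 \ \lor\ s(0)\alpha_{n-1}\h 0$. In the first case, since $r(0)\cdot 0 = 0$, the product-apartness axiom applied to $r(0)\alpha_n \h r(0)\cdot 0$ gives $r(0)\h r(0) \ \lor\ \alpha_n \h 0$, and as $\neg(r(0)\h r(0))$ we conclude $\alpha_n \h 0$; symmetrically the second case gives $\alpha_{n-1}\h 0$. Hence $\alpha_n \h 0 \ \lor\ \alpha_{n-1}\h 0$. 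A small bookkeeping remark is needed: the conclusion concerns $\alpha_n,\alpha_{n-1}\in K[[X]]$ while the computation took place in $K((X))$, but this causes no difficulty because for $\beta\in K[[X]]$ one has $\beta \h 0$ in $K[[X]]$ iff $\beta/1 \h 0$ in $K((X))$.

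I expect this argument to be essentially immediate; the only step requiring a little care is the implication $uv \h 0 \Rightarrow v \h 0$, which, as indicated above, is obtained from the product-apartness axiom together with the irreflexivity $\neg(u\h u)$. No case analysis on the characteristic of $K$, and no assumption that $F$ is monic, is needed.
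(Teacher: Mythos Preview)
Your proof is correct and follows essentially the same approach as the paper's: evaluate the separability identity at $Y=0$ to obtain a relation $r(0)\alpha_n + s(0)\alpha_{n-1}\h 0$, then use the apartness axioms to deduce the disjunction. The only cosmetic difference is that the paper first clears denominators so as to work entirely in $K[[X]]$ (writing $PF+QF_Y=\gamma$ with $\gamma\h 0$ in $K[[X]]$), whereas you stay in $K((X))$ and add the compatibility remark at the end; both variants are fine.
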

\begin{proof} 
Since $F$ is separable over $K((X))$ we have $P F + Q F_Y = \gamma \h 0$ for $P, Q \in K[[X]][Y]$ and $\gamma \in K[[X]]$. From this we get that $\gamma$ is equal to the constant term on the left hand side, i.e. $P(0) \alpha_n + Q(0) \alpha_{n-1} = \gamma \h 0$. Thus $\alpha_n \h 0 \lor \alpha_{n-1} \h 0$.
\end{proof}



 One key of Abhyankar's proof is Hensel's Lemma. We formulate a little more general version than the one
in \cite{Abhyankar90algebraicgeometry} by dropping the assumption that the base ring is a field.

\begin{lem}[Hensel's Lemma]
\label{hensel}
Let $R$ be a ring and $F(X,Y) = Y^n + \sum_{i=1}^n a_i(X)\;Y^{n-i}$ be a monic polynomial 
in $R[[X]][Y]$ of degree $n > 1$. 
Given monic $G_0, H_0\in R[Y]$ of degrees $r,s > 0$ respectively and $H^*, G^* \in R[Y]$ such that $F(0,Y) = G_0 H_0$,  $r + s = n$ and $G_0 H^* + H_0 G^*=1$; 
we can find $G(X,Y), H(X,Y) \in R[[X]][Y]$ of degrees $r, s$ respectively, such that $F(X,Y) = G(X,Y) H(X,Y)$ and $G(0,Y) = G_0$, $H(0,Y) = H_0$.
\end{lem}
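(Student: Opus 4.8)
The plan is to construct $G$ and $H$ coefficient by coefficient in the $X$-adic filtration, writing $G(X,Y) = \sum_{k \geq 0} G_k(Y)\, X^k$ and $H(X,Y) = \sum_{k \geq 0} H_k(Y)\, X^k$ with $G_0, H_0$ the given polynomials and each $G_k, H_k \in R[Y]$ of degree $< r$ and $< s$ respectively for $k \geq 1$ (while $G_0$ is monic of degree $r$ and $H_0$ monic of degree $s$). First I would expand the equation $F(X,Y) = G(X,Y)H(X,Y)$ by powers of $X$. Writing $F(X,Y) = \sum_{k \geq 0} F_k(Y) X^k$ with $F_0 = G_0 H_0$, the coefficient of $X^k$ for $k \geq 1$ gives
\[
F_k = G_0 H_k + H_0 G_k + \sum_{i=1}^{k-1} G_i H_{k-i}.
\]
So if $G_0, \dots, G_{k-1}$ and $H_0, \dots, H_{k-1}$ have already been determined, the right-hand sum $C_k := F_k - \sum_{i=1}^{k-1} G_i H_{k-i}$ is a known element of $R[Y]$, and we must solve $G_0 H_k + H_0 G_k = C_k$ for $G_k, H_k$.

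The next step is to solve this linear equation using the Bézout-type relation $G_0 H^* + H_0 G^* = 1$. Multiplying by $C_k$ gives $G_0 (H^* C_k) + H_0 (G^* C_k) = C_k$, so a naive solution is $H_k = H^* C_k$, $G_k = G^* C_k$. The issue is that these need not have degrees $< r$ and $< s$, which is what we need to keep the products monic of the correct degree and to make the recursion well-typed. To fix this I would perform Euclidean division of $H^* C_k$ by $G_0$: since $G_0$ is \emph{monic}, we can write $H^* C_k = G_0 q + H_k$ with $\deg H_k < r = \deg G_0$, with $q, H_k \in R[Y]$ (monic divisors allow division over an arbitrary ring). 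Then set $G_k := G^* C_k + H_0 q$. One checks $G_0 H_k + H_0 G_k = G_0(H^*C_k - G_0 q) + H_0(G^* C_k + H_0 q) = G_0 H^* C_k + H_0 G^* C_k = C_k$, as desired. It remains to verify $\deg G_k < s$: from $G_0 G_k = C_k - H_0 H_k$ and the degree bounds $\deg C_k \le n - 1$ (since $C_k = F_k - \sum G_i H_{k-i}$ and for $k\ge 1$ every term has $Y$-degree at most $n-1$ — here one uses that $F$ is monic so $F_k$ for $k \geq 1$ has degree $\le n-1$, and inductively the $G_i, H_i$ for $1 \le i \le k-1$ have the small degrees) together with $\deg(H_0 H_k) < s + r = n$, we get $\deg(G_0 G_k) < n$, hence $\deg G_k < n - r = s$ by monicity of $G_0$.

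Having produced all the $G_k, H_k$, I would assemble $G(X,Y) = \sum_k G_k X^k$ and $H(X,Y) = \sum_k H_k X^k$ in $R[[X]][Y]$; these are polynomials in $Y$ of degrees exactly $r$ and $s$ because only the $X^0$-coefficients contribute to the top $Y$-degrees and $G_0, H_0$ are monic of those degrees (so in fact $G, H$ are monic). By construction $F(X,Y) = G(X,Y)H(X,Y)$, $G(0,Y) = G_0$, $H(0,Y) = H_0$. The main obstacle, and the step deserving the most care, is the degree bookkeeping: making sure at each stage that the $C_k$ really has $Y$-degree $\le n-1$ so that the Euclidean-division trick yields $G_k$ of degree $< s$, which is what allows the induction to close; this is where monicity of $F$, $G_0$, $H_0$ is essential and is the reason the hypotheses are stated as they are. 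Note no field hypothesis or factorization is used — only division by monic polynomials over a commutative ring — which is exactly the generalization claimed before the statement.
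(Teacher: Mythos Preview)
Your overall strategy is exactly the paper's: pass to $R[Y][[X]]$, match coefficients of $X^k$, and at each step use the B\'ezout relation together with Euclidean division by a monic polynomial to solve $G_0 H_k + H_0 G_k = C_k$ with the right degree bounds. So conceptually there is nothing to add.

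However, your execution of the division step is miswired and, as written, the verification fails. You divide $H^* C_k$ by $G_0$, obtaining $H^* C_k = G_0 q + H_k$ with $\deg H_k < r$ (not $<s$, which is what you announced earlier), and then set $G_k = G^* C_k + H_0 q$. Plugging in,
\[
G_0 H_k + H_0 G_k = G_0(H^* C_k - G_0 q) + H_0(G^* C_k + H_0 q) = C_k + (H_0^2 - G_0^2)\,q,
\]
which is \emph{not} $C_k$ in general; the cross terms do not cancel because you subtracted a $G_0$--multiple and added an $H_0$--multiple. Your degree argument then quotes the equation $G_0 G_k = C_k - H_0 H_k$, which is not the equation you have either. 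The fix (and what the paper does) is to divide $H^* C_k$ by $H_0$ instead: write $H^* C_k = H_0 q + H_k$ with $\deg H_k < s$, and set $G_k = G^* C_k + G_0 q$. Then the cross terms are $\pm G_0 H_0 q$ and genuinely cancel, and from $H_0 G_k = C_k - G_0 H_k$ together with $\deg C_k < n$ and $\deg(G_0 H_k) < r+s = n$ you get $\deg G_k < r$, closing the induction.
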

\begin{proof}
The proof is almost the same as Abhyankar's \cite{Abhyankar90algebraicgeometry}, we present it here for completeness. \\
Since $R[[X]][Y] \subsetneq R[Y][[X]]$, we can rewrite $F(X,Y)$ as a power series in $X$ with coefficients in $R[Y]$. Let $F(X,Y) = F_0(Y) + F_1(Y) X + ...+ F_q(Y) X^q + ...$, with $F_i(Y) \in R[Y]$. Now we want to find $G(X,Y), H(X,Y) \in R[Y][[X]]$ such that $F = G H$. If we let $G = G_0+ \sum_{i=1}^{\infty} G_i(Y) X^i$ and $H = H_0 + \sum_{i=1}^{\infty} H_i(Y) X^i$, then for each $q$ we need to find $G_i(Y), H_j(Y)$ for $i,j \leq q$ such that $F_q = \sum_{i+j = q} G_i H_j$. We also need $\deg G_{k} < r$ and $\deg G_{\ell} < s$ for $ k, \ell > 0$.\\
We find such $G_i, H_j$ by induction on $q$. We have that $F_0 = G_0 H_0$. Assume that for some $q > 0$ we have found all $G_i, H_j$ with $\deg G_i < r$ and $\deg H_i < s$ for $1 \leq i < q$ and $1 \leq j < q$. Now we need to find $H_q, G_q$ such that\\
$F_q = G_0 H_q + H_0 G_q + \displaystyle\sum_{\substack{i+j = q \\ i < q, j < q}} G_i H_j$.
We let $U_q = F_q -  \displaystyle\sum_{\substack{i+j = q \\ i < q, j < q}} G_i H_j$, and we can see that $\deg U_q < n$. We are given that $G_0 H^* + H_0 G^* = 1$. Multiplying by $U_q$ we get $G_0 H^* U_q + H_0 G^* U_q = U_q$. By Euclidean division we can write $U_q H^* = E_q H_0 + H_q$ for some $E_q, H_q$ with $\deg H_q < s$. Thus we write $U_q = G_0 H_q + H_0 (E_q G_0 + G^* U_q)$. We can see that $\deg  H_0 (E_q G_0 + G^* U_q) < n$ since $\deg (U_q - G_0 H_q) < n$. Since $H_0$ is monic of degree $s$ , $\deg (E_q G_0 + G^* U_q) < r$. We take $G_q = E_q G_0 + G^* U_q$.\\
Now, we can write $G(X,Y), H(X,Y)$ as monic polynomials in $Y$ withe coefficients in $R[[X]]$, with degrees $r, s$ respectively.
\end{proof}

 It should be noted that the uniqueness of the factors $G$ and $H$ proven in \cite{Abhyankar90algebraicgeometry} may not necessarily hold when $R$ is not an integral domain.
 
 If $\alpha = \Sigma \alpha(i) X^i$ is an element of $R[[X]]$ we write $m\leqslant \ord~\alpha$ to mean that $\alpha(i)=0$
for $i<m$ and $m = \ord~\alpha$ to mean furthermore that $\alpha(m)$ is invertible. 

\begin{lem}\label{keylemma}
Let $K$ be an algebraically closed field of characteristic zero. \\
Let $F(X,Y) = Y^n + \sum_{i=1}^n \alpha_i(X) Y^{n-i}\in K[[X]][Y]$ be a
monic non-constant polynomial of degree $n \geq 2$ separable over $K((X))$. 
Then there exist $m>0$ and a proper factorization $F(T^m,Y)= G(T,Y)H(T,Y)$ with $G$ and $H$ in $K[[T]][Y].$
\end{lem}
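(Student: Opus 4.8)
The plan is to follow Abhyankar's ``Shreedharacharya'' argument, reducing the whole statement to a single application of Hensel's Lemma (Lemma \ref{hensel}) after a suitable change of variables, while checking that every step is effective.

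First I would normalize $F$ by a Tschirnhaus transformation. Since $\mathrm{char}\,K = 0$ the integer $n$ is a unit, so $\alpha_1/n \in K[[X]]$, and we may replace $F(X,Y)$ by $\tilde F(X,Y) = F(X,\,Y - \alpha_1(X)/n)$, which is again monic of degree $n$, has vanishing coefficient of $Y^{n-1}$, and is still separable over $K((X))$ (a direct computation with the chain rule, compare Lemma \ref{separablevarchangepersist}). Since a factorization $\tilde F(T^m,Y) = \tilde G\,\tilde H$ gives $F(T^m,Y) = \tilde G(T,\,Y + \alpha_1(T^m)/n)\,\tilde H(T,\,Y + \alpha_1(T^m)/n)$ with the $Y$-degrees of the factors unchanged, it suffices to treat $\tilde F$; I henceforth assume $\alpha_1 = 0$.

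Next I would split into two cases according to $F(0,Y) = Y^n + \sum_{i=2}^n \alpha_i(0) Y^{n-i}\in K[Y]$, which is decidable since $K$ is discrete. If $F(0,Y) \neq Y^n$, then since $K$ is algebraically closed $F(0,Y)$ splits into linear factors, and it cannot be $(Y-c)^n$: its coefficient of $Y^{n-1}$ is $0$, forcing $nc = 0$, hence $c = 0$, hence $F(0,Y) = Y^n$, a contradiction. Thus $F(0,Y)$ has at least two distinct roots and can be written $G_0 H_0$ with $G_0, H_0$ monic, coprime, of positive degrees $r,s$, $r+s=n$; coprimality over the field $K$ supplies $G_0 H^* + H_0 G^* = 1$, and Lemma \ref{hensel} (with $m = 1$) lifts this to a proper factorization $F(X,Y) = G(X,Y)H(X,Y)$ over $K[[X]][Y]$. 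In the remaining case $F(0,Y) = Y^n$, i.e. $\ord\alpha_i \geq 1$ for all $i$, separability enters: by Lemma \ref{apartness} at least one $\alpha_i \h 0$, so over the discrete field $K$ each such $\ord\alpha_i$ is computable and $h := \min\{\ord\alpha_i / i : \alpha_i \h 0\}$ is a well-defined positive rational; write $h = p/q$ in lowest terms, $q \geq 1$. Substituting $X = T^q$, $Y = T^p Z$ and dividing by $T^{np}$ gives $\bar F(T,Z) := T^{-np}F(T^q, T^p Z) = Z^n + \sum_{i=2}^n \beta_i(T) Z^{n-i}$ with $\beta_i(T) = T^{-ip}\alpha_i(T^q) \in K[[T]]$ — the estimate $q\,\ord\alpha_i \geq ip$ is exactly the defining inequality for $h$ — where $\beta_i(0) \neq 0$ precisely when $\ord\alpha_i/i = h$; so $\bar F(0,Z) \neq Z^n$, while its coefficient of $Z^{n-1}$ still vanishes. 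Hence $\bar F$ is again in the situation of the previous case, and Lemma \ref{hensel} yields $\bar F(T,Z) = G(T,Z)H(T,Z)$ with $G,H$ monic in $Z$ of positive degrees $r,s$. Reversing the substitution, $\hat G(T,Y) := T^{rp}G(T, T^{-p}Y)$ and $\hat H(T,Y) := T^{sp}H(T, T^{-p}Y)$ lie in $K[[T]][Y]$, are monic of positive $Y$-degrees, and satisfy $\hat G\,\hat H = T^{np}\bar F(T, T^{-p}Y) = F(T^q, Y)$; taking $m = q$ finishes the proof.

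I expect the main obstacle to be the bookkeeping of the last case: verifying that the substituted coefficients $\beta_i$ are genuine power series, that the specialization at $T = 0$ is not a pure $n$-th power (which is where $\mathrm{char}\,K = 0$ together with $\alpha_1 = 0$ re-enters), and that the factorization of $\bar F$ reassembles into one of $F(T^q,Y)$ with the correct degrees. The conceptual content is comparatively light: separability is used only, via Lemma \ref{apartness}, to guarantee that the Newton polygon has a well-defined positive computable slope, so the construction never stalls.
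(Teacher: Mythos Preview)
Your proof is correct and follows essentially the same route as the paper: Tschirnhaus reduction to $\alpha_1 = 0$, then either Hensel's Lemma directly when $F(0,Y)\neq Y^n$, or a substitution $X = T^q$, $Y = T^p Z$ governed by the minimal slope $h = p/q$ to reduce to that case. The one point the paper handles more carefully is the constructive computation of $h$: since the index set $\{i : \alpha_i \h 0\}$ is not decidable, one cannot literally take a minimum over it; instead the paper carries out a finite search over pairs $(i,j)$ with $j/i$ bounded by the known $\ell/k$ supplied by Lemma~\ref{apartness}, which is exactly what makes your $h$ effective.
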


\begin{proof}
We can assume w.l.o.g. that $\alpha_1(X) = 0$. This is Shreedharacharya's\footnote{Shreedharacharya's trick is also known as Tschirnhaus's trick \cite{vonTschirnhaus:2003:MRA}. The technique of removing the second term of a polynomial equation was also known to Descartes \cite{descartes_geom}.} trick \cite{Abhyankar90algebraicgeometry} (a simple change of variable $F(X, W-\alpha_1/n)$).
The simple case is if we have $\ord~\alpha_i = 0$ for some $1<i\leq n$. In this case $F(0,Y) = Y^n + d_2 Y^{n-1} + ... + d_n\in K[Y]$ and $d_i \neq 0$. Thus $\forall a \in K\; F(0,Y) \neq (Y-a)^n$. For any root $b$ of $F(0,b) = 0$
we have then a proper decomposition $F(0,Y) = (Y-b)^pH$ with $Y-b$ and $H$ coprime, and we can use Hensel's Lemma \ref{hensel} to conclude
(In this case we can take $m=1$).

In general, we know by Lemma \ref{apartness} that for $k=n$ or $k= n-1$ we have $\alpha_{k}(X)$ is apart from $0$. 
We then have $\alpha_{k}(\ell)$ invertible for some $\ell$. We can then find $p$ and $m$, $1 < m \leq n$,
such that $\alpha_m(p)$ is invertible and
$\alpha_i(j) = 0$ whenever $j/i  < p/m$ (See explanation below). We can then write
\begin{equation*}
F(T^m,T^{p}Z)= T^{np}(Z^n + c_2(T) Z^{n-2} + \dots + c_n(T))
\end{equation*}
with $\ord~c_m = 0$. As in the simple case, we have a proper decomposition 
$Z^n + c_2(T) Z^{n-2} + \dots + c_n(T) = G_1(T,Z)H_1(T,Z)$ with $G_1(T,Z)$ monic of degree $l$ in $Z$ and
$H_1(T,Z)$ monic of degree $q$ in $Z$, with $l+q = n,~l<n,~q<n$. 
We then take $G(T,Y) = T^{lp}G_1(T,Y/T^p)$ and $H(T,Y) = T^{qp}H_1(T,Y/T^p).$
\end{proof}
We note that since the polynomial is of finite $Y$ degree the search for $m$ and $p$ is finite. For example if the polynomial is of $Y$ degree $7$ (see Figure \ref{searchfinite}) and if $k = 4$ and $\ell=3$ we need only search the finite number of pairs to the left of the dotted line.
\begin{figure}[h]
\label{searchfinite}
\centering
\includegraphics[width=1.0\textwidth]{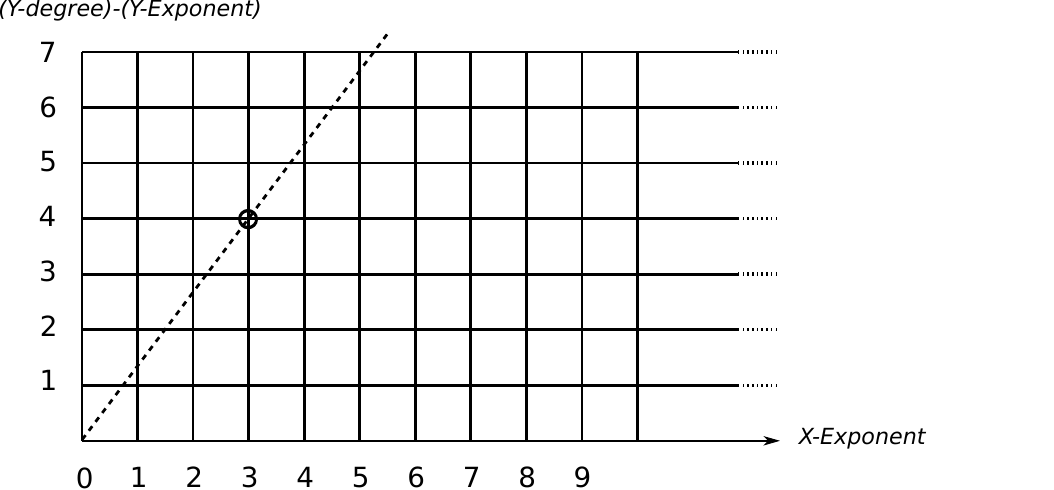}
\caption{Search for $m$ and $p$, Lemma \ref{keylemma}.}
\end{figure}
 
\begin{thm}\label{abhyankarproof}
Let $K$ be an algebraically closed field of characteristic zero.\\ 
Let $F(X,Y) = Y^n + \sum_{i=1}^n \alpha_i(X) Y^{n-i}\in K[[X]][Y]$ be a
monic non-constant polynomial separable over $K((X))$. Then there exist a positive integer $m$ and factorization
\begin{equation*}
F(T^m,Y)= \prod_{i=1}^{n} \big(Y - \eta_i\big)\;\;\;\;\;\;\;\; \eta_i \in K[[T]]
\end{equation*}
\end{thm}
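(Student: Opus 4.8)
The plan is to induct on the $Y$-degree $n$ of $F$. When $n = 1$ we have $F = Y + \alpha_1(X)$ with $\alpha_1 \in K[[X]]$, so $m = 1$ and $\eta_1 = -\alpha_1$ already give the conclusion; hence assume $n \geq 2$ and that the statement holds for every monic non-constant polynomial of $Y$-degree $< n$ separable over the corresponding Laurent field.

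Two preliminary observations drive the inductive step. First, for any $m > 0$ the polynomial $F(T^m, Y)$ is separable over $K((T))$: a Bézout identity $rF + sF_Y = 1$ with $r, s \in K((X))[Y]$ is transported by the substitution $X \mapsto T^m$ (a ring homomorphism $K((X)) \to K((T))$, applied coefficientwise to polynomials in $Y$) to $\tilde r\, F(T^m, Y) + \tilde s\, F_Y(T^m, Y) = 1$; and since differentiation in $Y$ commutes with a substitution in $X$ we have $F_Y(T^m, Y) = \partial_Y\big(F(T^m, Y)\big)$, so this identity is exactly a separability witness for $F(T^m, Y)$ over $K((T))$. Second, by Lemma \ref{sepdivisorsep} every monic factor in $K[[T]][Y]$ of a polynomial separable over $K((T))$ is itself separable over $K((T))$.

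Now apply Lemma \ref{keylemma} to obtain $m_0 > 0$ and a proper factorization $F(T^{m_0}, Y) = G(T, Y) H(T, Y)$ with $G, H \in K[[T]][Y]$ monic (rescaling by the leading coefficients, which are units of $K[[T]]$ with product $1$) of degrees $l$ and $q$, where $l + q = n$ and $1 \leq l, q \leq n - 1$. By the two observations, $G$ and $H$ are monic non-constant polynomials in $K[[T]][Y]$ of $Y$-degree $< n$ that are separable over $K((T))$, so the induction hypothesis supplies $m_1, m_2 > 0$ with $G(S^{m_1}, Y) = \prod_{i=1}^{l}(Y - \zeta_i)$ and $H(S^{m_2}, Y) = \prod_{j=1}^{q}(Y - \xi_j)$, all $\zeta_i, \xi_j \in K[[S]]$. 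Finally put $m = m_0 m_1 m_2$ and substitute $T = U^{m_1 m_2}$ in $F(T^{m_0}, Y) = G(T, Y) H(T, Y)$: then $G(U^{m_1 m_2}, Y) = G\big((U^{m_2})^{m_1}, Y\big) = \prod_i \big(Y - \zeta_i(U^{m_2})\big)$ with each $\zeta_i(U^{m_2}) \in K[[U]]$, and similarly $H(U^{m_1 m_2}, Y) = \prod_j \big(Y - \xi_j(U^{m_1})\big)$ with $\xi_j(U^{m_1}) \in K[[U]]$; multiplying these gives the desired splitting of $F(U^{m}, Y)$ into $n$ linear factors over $K[[U]]$, and renaming $U$ to $T$ completes the induction.

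The one point to watch is that the recursion is well-founded, and it is, precisely because the factorization returned by Lemma \ref{keylemma} is proper, forcing $\deg_Y G, \deg_Y H < n$; this is where the separability hypothesis earns its keep, since separability is what makes Lemma \ref{keylemma} (through Lemma \ref{apartness} and Hensel's Lemma \ref{hensel}) applicable. The remainder — stability of separability under $X \mapsto T^m$ and under passing to monic factors, together with the composition of the two radical substitutions at the end — is routine bookkeeping rather than a genuine obstacle.
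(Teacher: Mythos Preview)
Your proof is correct and follows essentially the same approach as the paper's own proof, which simply says the result follows by induction from Lemma \ref{sepdivisorsep} and Lemma \ref{keylemma} together with the observation that separability persists under $X\mapsto T^m$. You have merely written out the induction and the bookkeeping of combining the two radical substitutions in full; the remark about rescaling leading coefficients is unnecessary since the $G,H$ produced in the proof of Lemma \ref{keylemma} are already monic, but it does no harm.
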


\begin{proof}
If $F(X,Y)$ is separable over $K((X))$ then $F(T^m,Y)$ for some positive integer $m$ is separable over $K((T))$. The proof follows from Lemma \ref{sepdivisorsep} and Lemma \ref{keylemma} by induction.
\end{proof}

\begin{cor}
\label{sepalgclose}
Let $K$ be an algebraically closed field of characteristic zero. The Heyting field of fractional power series over $K$ is separably algebraically closed.
\end{cor}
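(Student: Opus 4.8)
The plan is to deduce the corollary straight from Theorem \ref{abhyankarproof} by two routine changes of variable. Recall $K\servar{X} = \bigcup_{m} K((X^{1/m}))$, and that it suffices to show that every monic separable polynomial of degree $n \geq 1$ over $K\servar{X}$ splits into linear factors over $K\servar{X}$ (a nonconstant separable polynomial with invertible leading coefficient is made monic by dividing by that coefficient, and the identity $rp+sp'=1$ witnessing separability survives scaling by a unit of the coefficient ring). So let $F = Y^n + \sum_{i=1}^n \alpha_i Y^{n-i}$ be monic and separable over $K\servar{X}$. Each $\alpha_i$ lies in some $K((X^{1/m_i}))$; putting $r = \prod_i m_i$ and $S = X^{1/r}$, all the $\alpha_i$ lie in $K((S)) \cong K((X^{1/r}))$ and $F$ is separable over $K((S))$.

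The next step is to get out of the Laurent series and into $K[[S]]$. Since each $\alpha_i$ is a Laurent series in $S$, there is $k \in \nats$ with $S^k \alpha_i \in K[[S]]$ for all $i$, hence $S^{ik}\alpha_i \in K[[S]]$ for all $i \geq 1$. Set
\[
G(Z) \;=\; S^{nk}\, F\!\left(S^{-k} Z\right) \;=\; Z^n + \sum_{i=1}^n \big(S^{ik}\alpha_i\big) Z^{n-i} \;\in\; K[[S]][Z],
\]
a monic polynomial of degree $n$. I would then check $G$ is separable over $K((S))$: by Lemma \ref{separablevarchangepersist}, applied over the ring $K((S))$ with the unit $u = S^{-k}$, the polynomial $F(S^{-k}Z)$ is separable over $K((S))$, and multiplying by the unit $S^{nk}$ (a constant with respect to $Z$) preserves this.

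Now Theorem \ref{abhyankarproof}, with $S$ in the role of $X$, yields a positive integer $m$ and a factorization $G(T^m, Z) = \prod_{i=1}^n (Z - \eta_i)$ with $\eta_i \in K[[T]]$. Substituting $S = T^m$ and $Z = T^{mk} Y$ into the identity $G(S, S^k Y) = S^{nk} F(Y)$ gives $T^{mnk} F(Y) = \prod_{i=1}^n(T^{mk}Y - \eta_i)$, so after cancelling the unit $T^{mnk}$,
\[
F(Y) \;=\; \prod_{i=1}^n \big(Y - \eta_i T^{-mk}\big).
\]
Each factor's root $\eta_i T^{-mk}$ is a Laurent series in $T$, and since $T = S^{1/m} = X^{1/(rm)}$ it lies in $K((X^{1/(rm)})) \subseteq K\servar{X}$. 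Hence $F$ splits completely over $K\servar{X}$, which gives the claim.

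I do not expect a genuine obstacle here: all the content is in Theorem \ref{abhyankarproof}, already proved. The only point that wants care is the clearing of denominators — one must rescale $Y$ (rather than merely multiply $F$ by a power of $S$) in order to keep the polynomial monic, and then observe that the exponent on $\alpha_i$ in $G$ is $ik$, not $k$, which is precisely what makes the single choice $S^k\alpha_i \in K[[S]]$ sufficient for all $i$. The rest is bookkeeping of the exponents through the two substitutions and the (immediate) fact that separability is unaffected by scaling by a unit of the coefficient ring.
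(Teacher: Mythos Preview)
Your proof is correct and follows essentially the same route as the paper's: clear denominators by the substitution $Y = u^{-1}Z$ for a suitable $u$ apart from $0$ (you take $u = S^k$; the paper takes $u = \beta$, the product of the denominators of the $\alpha_i$), apply Theorem~\ref{abhyankarproof}, and undo the substitution. The only small wrinkle is that when you set $r = \prod_i m_i$ you should also absorb the denominators appearing in the B\'ezout witnesses $P,Q$ of separability, so that $F$ is separable already over $K((S))$ rather than merely over $K\servar{X}$; this is a one-line fix.
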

\begin{proof}
Let $F (X,Y)\in K((X))[Y]$ be a monic separable polynomial of degree $n > 1$. Let $\beta \h 0$ be the product of the denominators of the coefficients of $F$. Then we can write $F(X, \beta^{-1}Z) = \beta^{-n} G$ for $G \in K[[X]][Z]$. By Lemma \ref{separablevarchangepersist} we get that $F$, hence $G$, is separable in $Z$ over $K((X))$. By Theorem \ref{abhyankarproof}, $G(T^m,Z)$ factors linearly over $K[[T]]$ for some positive integer $m$. Consequently we get that $F(T^m, Y)$ factors linearly over $K((T))$.
\end{proof}
 
 In the following we show that the elements in $K\servar{X}$ algebraic over $K(X)$ form a discrete algebraically closed field.

\begin{lem}
\label{rootorderbound}
Let $K$ be a field and $F(X,Y) = Y^n + b_1 Y^{n-1} + ... + b_n \in K(X)[Y]$ be a non-constant monic polynomial such that $b_n\neq 0$. If $\gamma \in K((T))$ is a root of $F(T^q, Y)$, then $\ord~\gamma \leq d$ for some positive integer $d$.
\end{lem}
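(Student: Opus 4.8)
The plan is to exploit the relation $F(T^q,\gamma)=0$, rewritten in terms of $\delta=\gamma^{-1}$, together with the ultrametric behaviour of $\ord$ on $K((T))$. Morally this is the ``steepest edge'' estimate of the Newton polygon of $F(T^q,Y)$, but it can be carried out without leaving $K((T))$.

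First I would check that $\gamma\h 0$. From $F(T^q,\gamma)=0$ we obtain
\[
b_n(T^q)=-\,\gamma\bigl(\gamma^{n-1}+b_1(T^q)\gamma^{n-2}+\dots+b_{n-1}(T^q)\bigr),
\]
and since $b_n\neq 0$ in the discrete field $K(X)$ we have $b_n(T^q)\h 0$ in $K((T))$; as $K((T))$ is an integral domain this forces $\gamma\h 0$. So $\gamma$ is invertible in the Heyting field $K((T))$, the integer $v:=\ord~\gamma$ is well defined, and $\ord~\gamma^{-1}=-v$.

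Next I would pass to $\delta:=\gamma^{-1}$. Multiplying $F(T^q,\gamma)=0$ by $\delta^{\,n}$ gives $1+b_1(T^q)\delta+\dots+b_n(T^q)\delta^{\,n}=0$, that is,
\[
b_n(T^q)\,\delta^{\,n}=-\bigl(1+b_1(T^q)\,\delta+\dots+b_{n-1}(T^q)\,\delta^{\,n-1}\bigr).
\]
For each $j\in\{0,\dots,n-1\}$ with $b_j\neq 0$ (convention $b_0=1$) put $e_j:=\ord_X b_j\in\mathbb{Z}$; then $\ord~b_j(T^q)=q\,e_j$, since writing $b_j=X^{e_j}u$ with $u$ a unit of $K[[X]]$ gives $b_j(T^q)=T^{q e_j}u(T^q)$ with $u(T^q)$ again a unit of $K[[T]]$. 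Hence $\ord\bigl(b_j(T^q)\delta^{\,j}\bigr)=q e_j-j v$, and applying $\ord$ to the last identity together with $\ord(a_1+\dots+a_k)\geq\min_i\ord~a_i$ yields
\[
q\,e_n-n v\ \geq\ \min_{\,0\le j\le n-1,\ b_j\neq 0}\bigl(q\,e_j-j v\bigr).
\]
Thus some $j_0\in\{0,\dots,n-1\}$ with $b_{j_0}\neq 0$ satisfies $q e_n-n v\geq q e_{j_0}-j_0 v$, which (using $n-j_0\geq 1$) rearranges to $v\leq q\,(e_n-e_{j_0})/(n-j_0)$.

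It then suffices to take $d$ to be any positive integer with $d\geq\max_{\,0\le j\le n-1,\ b_j\neq 0}\lceil\,q\,(e_n-e_j)/(n-j)\,\rceil$, a finite maximum over a nonempty index set since $j=0$ always qualifies; this search is legitimate because $K(X)$ has decidable equality, so for each $j$ we can decide $b_j=0$ or $b_j\neq 0$. Then $\ord~\gamma=v\leq d$, as required. I do not expect a genuine obstacle: the only points needing care are the valuation identity $\ord~b_j(T^q)=q\cdot\ord_X b_j$ and the nonemptiness of the minimum's index set --- and it is precisely to secure the latter that one inverts $\gamma$, since the equation for $\delta$ then always contains the constant term $1$, of order $0$, to compare against.
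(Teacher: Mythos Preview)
Your argument is correct, but it takes a different and more elaborate route than the paper's. The paper simply clears denominators: choose $h\in K[X]$ with $G=hF=a_0Y^n+\dots+a_n\in K[X][Y]$ and $a_n\neq 0$, set $d=\ord~a_n(T^q)\geq 0$, and observe that if $\ord~\gamma>d$ then every term $a_i(T^q)\gamma^{n-i}$ with $i<n$ has order $>d$ (the $a_i$ being polynomials have nonnegative order, and $\gamma^{n-i}$ contributes order $>d$), whereas $a_n(T^q)$ has order exactly $d$; hence $G(T^q,\gamma)\h 0$, contradicting the root hypothesis. No inversion of $\gamma$, no ultrametric minimum, no Newton-polygon slope comparison is needed.

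What your approach buys is a sharper, intrinsic bound---essentially the maximal slope of the Newton polygon of $F(T^q,Y)$---expressed directly in terms of the $b_j$, without passing to integral coefficients. The cost is the extra step of proving $\gamma\h 0$ so that $\delta=\gamma^{-1}$ exists, and the bookkeeping of the valuation inequality. The paper's argument trades that sharpness for brevity: once you clear denominators, the single term $a_n(T^q)$ already witnesses the contradiction, and the bound $d=\ord~a_n(T^q)$ drops out immediately.
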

\begin{proof}
We can find $h \in K[X]$ such that $G = h F = a_0(X) Y^n + a_1(X) Y^{n-1} + ... + a_n(X)\in K[X][Y]$ with $a_n \neq 0$. Let $d = \ord~a_n(T^q)$. If $\ord~\gamma > d$ then so is $\ord~a_i \gamma^{n-i}$ for $0 \leq i < n$. But we know that in $a_n$ there is a non-zero term with $T-$degree $d$. Thus $G(T^q, \gamma) \h 0$; Consequently $F(T^q,\gamma) \h 0$
\end{proof}

Note that if $\alpha, \beta \in K\servar{X}$ are algebraic over $K(X)$ then $\alpha + \beta$ and $\alpha \beta$ are algebraic over $K(X)$ \cite [Ch 6, Corollary 1.4] {Mines}. 

\begin{lem}
\label{discreteness}
Let $K$ be a field. The set of elements in $K\servar{X}$ algebraic over $K(X)$ is a discrete set; More precisely $\#$ is decidable on this set.
\end{lem}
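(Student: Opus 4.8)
The plan is to reduce the decidability of $\h$ on the set of algebraic Puiseux series to the case of a single element: given $\gamma \in K\servar{X}$ algebraic over $K(X)$, decide whether $\gamma = 0$ or $\gamma \h 0$. This reduction is immediate. If $\alpha,\beta \in K\servar{X}$ are algebraic over $K(X)$, then so is $\alpha - \beta = \alpha + (-1)\cdot\beta$, since the algebraic elements are closed under sums and products; and in a ring with apartness $\alpha \h \beta$ holds if and only if $\alpha - \beta \h 0$. Finally, by tightness of the apartness on $K\servar{X}$, deciding whether $\gamma = 0$ or $\gamma \h 0$ is the same as deciding $\h$.

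So fix $\gamma \in K((X^{1/m}))$ for some $m$, together with a non-zero monic polynomial $F \in K(X)[Y]$ with $F(\gamma) = 0$ (dividing by the leading coefficient, a unit of $K(X)$, makes $F$ monic). Since $K(X)$ is a discrete field we can compute the largest power of $Y$ dividing $F$ and write $F = Y^k G$ with $G \in K(X)[Y]$ monic and having non-zero constant term as a polynomial in $Y$. If $G = 1$, then $F = Y^{\deg F}$, so $\gamma^{\deg F} = 0$, and since $K((X^{1/m}))$ is an integral domain, $\gamma = 0$. Otherwise $G$ is a non-constant monic polynomial over $K(X)$ with non-zero constant term, and applying Lemma \ref{rootorderbound} to $G$ with $q = m$ produces an explicitly computable integer $d$ such that every root $\delta \in K((T))$ of $G(T^m, Y)$, where $T = X^{1/m}$, satisfies $\ord~\delta \le d$. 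On the other hand $F(\gamma) = \gamma^k G(\gamma) = 0$, and $K((X^{1/m}))$ is a Heyting field; so if $\gamma \h 0$, then $\gamma^k$ is invertible, hence $G(\gamma) = 0$, hence $\gamma$ is a root of $G(T^m,Y)$ and $\ord~\gamma \le d$.

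To conclude, write $\gamma = T^{-f}\gamma'$ with $\gamma' \in K[[T]]$, where $f = \ord~b$ for the given representation $\gamma = a/b$ of $\gamma$ with $a,b \in K[[T]]$ and $b \h 0$; then $\gamma \h 0$ iff $\gamma' \h 0$ and $\gamma = 0$ iff $\gamma' = 0$. Inspect the coefficients $\gamma'(0), \dots, \gamma'(d+f)$ — possible because $K$ is discrete. If one of them is $\h 0$, then $\gamma' \h 0$, so $\gamma \h 0$. If they all vanish, then $\neg(\gamma \h 0)$: otherwise $\ord~\gamma \le d$, meaning $\gamma$ has an invertible coefficient at some index $\le d$, i.e.\ $\gamma'$ has one at some index $\le d+f$, contradicting that $\gamma'(0), \dots, \gamma'(d+f)$ are all $0$. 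By tightness of the apartness on $K[[T]]$ (it is tight because $\h$ is tight on the discrete field $K$) we then get $\gamma' = 0$, hence $\gamma = 0$.

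The order bookkeeping and the apartness manipulations are routine; the one genuine ingredient is Lemma \ref{rootorderbound}, which converts the algebraicity of $\gamma$ into an \emph{a priori} bound on $\ord~\gamma$ and so replaces the otherwise infinite search for a non-zero coefficient of $\gamma$ by a finite one. The point requiring a little care is that we are not given the minimal polynomial of $\gamma$ — no factorization over $K(X)$ is assumed — so we must argue with an arbitrary annihilating polynomial; splitting off the power of $Y$ and then reasoning by contradiction inside the remaining factor $G$ is what deals with this.
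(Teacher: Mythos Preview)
Your proof is correct and follows essentially the same route as the paper's: reduce to a single element, split off the maximal power of $Y$ from an annihilating polynomial, invoke Lemma~\ref{rootorderbound} on the cofactor to bound $\ord~\gamma$, and then inspect finitely many coefficients. You spell out more carefully than the paper does how the Laurent representation $\gamma = T^{-f}\gamma'$ reduces the check to finitely many coefficients of a power series, and how tightness of the apartness closes the argument; the paper compresses all of this into the single line ``$\gamma \h 0$ if and only if $\ord~\gamma \leq d$''.
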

\begin{proof}
It suffices to show that for an element $\gamma$ in this set $\gamma \h 0$ is decidable.
Let $F = Y^n + a_1(X) Y^{n-1} +...+a_n\in K(X)[Y]$ be a monic non-constant polynomial. Let $\gamma \in K((T))$ be a root of $F(T^q,Y)$. If $F = Y^n$ then $\neg \gamma \h 0$. Otherwise, $F$ can be written as $Y^m (Y^{n-m} + ...+a_m)$ with $0\leq m < n$ and $a_m \neq 0$.
By Lemma \ref{rootorderbound} we can find $d$ such that any element in $K((T))$ that is a root of $Y^{n-m} + ... + a_m$ has an order less than or equal to $d$. Thus $\gamma \h 0$ if an only if $\ord~\gamma \leq d$.
\end{proof}

If $\alpha\h0 \in K\servar{X}$ is algebraic over $K(X)$ then $1/\alpha$ is algebraic over $K(X)$. Thus the set of elements in $K\servar{X}$ algebraic over $K(X)$ form a field $K\servar{X}^{alg} \subset K\servar{X}$.  This field is in fact algebraically closed \emph{in} $K\servar{X}$ \cite[Ch 6, Corollary 1.5] {Mines}.

Since for an algebraically closed field $K$ we have shown $K\servar{X}$ to be only \emph{separably} algebraically closed, we need a stronger argument to show that $K\servar{X}^{alg}$ is algebraically closed.

\begin{lem}
\label{algebraicallyclosed}
For an algebraically closed field $K$ of characteristic zero, the field $K\servar{X}^{alg}$ is algebraically closed.
\end{lem}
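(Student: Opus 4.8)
The plan is to combine two facts that are already available: that $K\servar{X}$ is \emph{separably} algebraically closed (Corollary~\ref{sepalgclose}), and that $L := K\servar{X}^{alg}$ is algebraically closed \emph{in} $K\servar{X}$ (\cite[Ch~6, Corollary~1.5]{Mines}). First I would record that $L$ is a discrete field of characteristic zero: it is a Heyting field containing $K$, hence $\mathbb{Q}$, and by Lemma~\ref{discreteness} the apartness $\#$ is decidable on $L$, so equality is decidable on $L$ and every element of $L$ is $0$ or invertible. Then, given a monic $F = Y^n + a_1 Y^{n-1} + \dots + a_n \in L[Y]$ with $n \geq 1$, I pass to its separable associate: by Lemma~\ref{squarefree} applied over $L$ we may write $F = hg$ with $g = \gcd(F, F')$ and $h$ separable in $L[Y]$.

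The one place characteristic zero is essential is the claim that $h$ is non-constant. Since $L$ has characteristic zero, $n \cdot 1$ is invertible in $L$, so $F'$ has degree exactly $n-1$; hence $\deg g \leq \deg F' = n-1$ and $\deg h = n - \deg g \geq 1$. (In positive characteristic this can fail, e.g. for $Y^p - X$, which is why only separable algebraic closure was obtained above.) Now $h$ is a monic, non-constant, separable polynomial over $L \subseteq K\servar{X}$. If $\deg h = 1$ then $h = Y - \beta$ with $\beta \in L$. If $\deg h \geq 2$, then at a high enough level the coefficients of $h$, together with those of a pair $r, s$ with $rh + sh' = 1$, all lie in $K((X^{1/N}))$; regarding this as the Laurent series field $K((Z))$ with $Z = X^{1/N}$, Corollary~\ref{sepalgclose} produces, after adjoining an $m$-th root of $Z$, a linear factor of $h$ over $K((Z^{1/m})) = K((X^{1/(Nm)})) \subseteq K\servar{X}$. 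In either case $h$, and hence $F$, has a root $\beta \in K\servar{X}$.

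Finally, $\beta$ is a root of the monic non-constant polynomial $F$ whose coefficients all lie in $L$, so $\beta$ is algebraic over $L$; since $L$ is algebraically closed in $K\servar{X}$, we get $\beta \in L$. Thus every monic non-constant polynomial over $L$ has a root in $L$, and splitting off the factor $Y - \beta$ and proceeding by induction on the degree shows $L$ is algebraically closed. I do not expect a serious obstacle here: the mathematical content is carried entirely by Corollary~\ref{sepalgclose} and Lemma~\ref{squarefree}, and the only points needing care are the non-degeneracy of the separable associate in characteristic zero and the routine bookkeeping of which level $K((X^{1/m}))$ the coefficients and the root inhabit.
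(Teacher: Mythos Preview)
Your proposal is correct and follows essentially the same route as the paper: invoke Lemma~\ref{discreteness} to see $L=K\servar{X}^{alg}$ is discrete, use Lemma~\ref{squarefree} to pass to a non-constant separable factor, apply Corollary~\ref{sepalgclose} to find a root in $K\servar{X}$, and conclude via algebraic closedness of $L$ in $K\servar{X}$. You simply make explicit two points the paper leaves implicit---the characteristic-zero argument that the separable associate has positive degree, and the level bookkeeping needed to apply Corollary~\ref{sepalgclose} to a polynomial with coefficients in $K((X^{1/N}))$ rather than $K((X))$.
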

\begin{proof}
Let $F \in K\servar{X}^{alg} [Y]$ be a monic non-constant polynomial of degree $n$. By Lemma \ref{discreteness} $K\servar{X}^{alg}$ is a discrete field. By Lemma \ref{squarefree} we can decompose $F$ as $F = H G$ with $H \in K\servar{X}^{alg} [Y]$ a non-constant monic separable polynomial. By Corollary \ref{sepalgclose}, $H$ has a root $\eta$ in $K\servar{X}$. Since $K\servar{X}^{alg}$ is algebraically closed in $K\servar{X}$ we have that $\eta \in K\servar{X}^{alg}$.
\end{proof}

We can draw similar conclusions in the case of real closed fields \footnote{We reiterate that by a field we mean a discrete field.}.
\begin{lem}
\label{seprealclose}
Let $R$ be a real closed field. Then 
\begin{enumerate}
\item For any $\alpha \h 0 \in R\servar{X}$ we can find $\beta \in R\servar{X}$ such that $\beta^2 = \alpha$ or $-\beta^2 = \alpha$. 
\item A separable monic polynomial of odd degree in $R\servar{X}[Y]$ has a root in $R\servar{X}$.
\end{enumerate}
\end{lem}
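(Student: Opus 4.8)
The two parts call for different techniques, and neither is long once the right machinery is in place. For part~(1): given $\alpha \h 0$ in $R\servar{X}$ I would first note that $\alpha$ lies in $R((T))$ for $T = X^{1/m}$ and some $m$, and write $\alpha = T^{n} u$ with $n \in \mathbb{Z}$ and $u = a_0 + a_1 T + \dots \in R[[T]]$, $a_0 \neq 0$ in the discrete field $R$. Since $R$ is real closed, either $a_0 > 0$ or $a_0 < 0$; assume first $a_0 > 0$, so $a_0 = c^2$ with $c \h 0$. I would extract a square root of the unit $u$ inside $R[[T]]$ by applying Hensel's Lemma~\ref{hensel} to $Y^2 - u \in R[[T]][Y]$, using the factorization $Y^2 - a_0 = (Y-c)(Y+c)$ and the B\'ezout relation $(Y-c)\cdot\frac{-1}{2c} + (Y+c)\cdot\frac{1}{2c} = 1$ (valid since $2c$ is a unit, $R$ having characteristic zero); this yields $\delta \in R[[T]]$ with $\delta^2 = u$. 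It then remains to absorb $T^{n}$: if $n$ is even, $\beta = T^{n/2}\delta$ works; if $n$ is odd I would pass to the finer ramification $S = X^{1/(2m)}$, with $S^2 = T$, and take $\beta = S^{n}\delta \in R\servar{X}$. When $a_0 < 0$ the same applied to $-\alpha$ gives $\beta$ with $-\beta^2 = \alpha$.

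For part~(2): let $\bar R = R[i]$, algebraically closed of characteristic zero, and let $\sigma$ be the involutive automorphism of $\bar R\servar{X}$ acting coefficientwise by $i \mapsto -i$; one checks that $\bar R\servar{X} = R\servar{X}[i]$ and that the fixed field of $\sigma$ is exactly $R\servar{X}$. A separable monic $F \in R\servar{X}[Y]$ of odd degree $n$ (the case $n = 1$ being immediate) stays separable over $\bar R\servar{X}$, so by Corollary~\ref{sepalgclose} it factors there as $F = \prod_{j=1}^{n}(Y-\eta_j)$ with $\eta_j \in \bar R\servar{X}$. Separability is used again to get that the roots are pairwise apart: from $rF + sF' = 1$ one has $F'(\eta_j) = \prod_{l\neq j}(\eta_j - \eta_l) \h 0$, and in a Heyting field a product apart from $0$ has every factor apart from $0$. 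Now $\sigma\eta_j$ is again a root of $F$, and the key point is to show that $\sigma$ induces an actual permutation $\pi$ of $\{1,\dots,n\}$ with $\sigma\eta_j = \eta_{\pi(j)}$. Since $\bar R\servar{X}$ is not discrete, we cannot simply compare $\sigma\eta_j$ with each $\eta_l$; instead I would use that $F'(\sigma\eta_j) = \sigma(F'(\eta_j)) \h 0$, so in $F'(\sigma\eta_j) = \sum_k \prod_{l\neq k}(\sigma\eta_j - \eta_l)$ some summand $\prod_{l\neq \pi(j)}(\sigma\eta_j - \eta_l)$ is apart from $0$; then each factor $\sigma\eta_j - \eta_l$ with $l\neq\pi(j)$ is invertible, while $\prod_k(\sigma\eta_j - \eta_k) = F(\sigma\eta_j) = 0$, which forces $\sigma\eta_j = \eta_{\pi(j)}$. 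From $\sigma^2 = \mathrm{id}$ and the injectivity of $j\mapsto\eta_j$ (the roots being pairwise apart) one checks that $\pi$ is a well-defined involution; as $n$ is odd, $\pi$ has a fixed point $j_0$, and then $\sigma\eta_{j_0} = \eta_{j_0}$, so $\eta_{j_0} \in R\servar{X}$ is the required root.

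The step I expect to be the genuine obstacle is this constructive extraction of the permutation $\pi$: because equality of fractional power series is undecidable, the naive orbit-counting argument fails, and it is exactly the separability of $F$ --- via $F'(\sigma\eta_j) \h 0$ --- that lets us pin down effectively which linear factor $\sigma\eta_j$ realizes. Everything else is routine: the Hensel computation in part~(1), the verification that $\pi$ is an involution, and the finite search for a fixed point of an involution on a set of odd cardinality. The only external fact invoked is that $R[i]$ is algebraically closed of characteristic zero, which is what makes Corollary~\ref{sepalgclose} applicable.
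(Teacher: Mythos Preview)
Your argument is correct in both parts. Part~(1) is essentially the paper's proof, just with the square-root extraction spelt out via Hensel's Lemma rather than by direct recursion on coefficients; the handling of the exponent parity by passing to $X^{1/(2m)}$ is exactly what is needed.

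Part~(2) is a genuinely different route. The paper stays entirely over $R$: it re-runs the Newton--Puiseux reduction (assume $\alpha_1=0$, find $m,p$ so that $F(T^m,T^pZ)=T^{np}V$ with $V(0,Z)\neq(Z+a)^n$), then uses only that an odd-degree polynomial over the real closed field $R$ has a root to factor $V(0,Z)$, lifts by Hensel, and continues by induction on the degree, always tracking the odd-degree factor. Your proof instead passes to $\bar R=R[i]$, invokes Corollary~\ref{sepalgclose} to factor $F$ completely over $\bar R\servar{X}$, and then uses the conjugation involution and the parity of $n$ to locate a fixed root. The paper's version is more algorithmically explicit over $R$ and parallels the main proof structure; yours is shorter once Corollary~\ref{sepalgclose} is available and isolates the constructive content cleanly in the permutation step. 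Your identification of that step as the crux is exactly right: the apartness argument you give (from $F'(\sigma\eta_j)\h0$ find a summand $\prod_{l\neq\pi(j)}(\sigma\eta_j-\eta_l)\h0$, whence each factor is a unit of the Heyting field, whence the remaining factor of $F(\sigma\eta_j)=0$ vanishes) is the correct constructive replacement for the classical ``$\sigma$ permutes the roots'' slogan, and the fixed-point search for an involution on $\{1,\dots,n\}$ is decidable.
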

\begin{proof}
Since $R$ is real closed, the first statement follows from the fact an element $a_0 + a_1 X + ... \in R[[X]]$ with $a_0 > 0$ has a square root in $R[[X]]$.

Let $F(X,Y) = Y^n + \alpha_1 Y^{n-1} + ... + \alpha_n\in R[[X]][Y]$ be a monic polynomial of odd degree $n > 1$ separable over $R((X))$. We can assume w.l.o.g. that $\alpha_1 = 0$. Since $F$ is separable, i.e. $P F + Q F_Y = 1$ for some $P, Q \in R((X))[Y]$, then by a similar construction to that in Lemma \ref{keylemma} we can write $F(T^m,T^{p}Z)= T^{np} V$ for $V \in R[[T]][Z]$ such that $V(0,Z) \neq (Z + a)^n$ for all $a\in R$. Since $R$ is real closed and $V(0,Z)$ has odd degree, $V(0,Z)$ has a root $r$ in $R$. We can find proper decomposition into coprime factors $V(0,Z) = (Z-r)^\ell q$. By Hensel's Lemma\ref{hensel}, we lift those factors to factors of $V$ in $R[[T]][Z]$ thus we can write $F = G H$ for monic non-constant $G,H \in R[[T]][Y]$. By Lemma \ref{sepdivisorsep} both $G$ and $H$ are separable. Either $G$ or $H$ has odd degree. Assuming $G$ has odd degree greater than $1$, we can further factor $G$ into non-constant factors. The statement follows by induction.
\end{proof}

Let $R$ be a real closed field. By Lemma \ref{discreteness} we see that $R\servar{X}^{alg}$ is discrete. A non-zero element in $\alpha \in R\servar{X}^{alg}$ can be written $\alpha = X^{m/n} (a_0 + a_1 X^{1/n} + ...)$ for $n> 0, m \in \mathbb{Z}$ with $a_0 \neq 0$.  Then $\alpha$ is positive iff its initial coefficient $a_0$ is positive \cite{basu}. We can then see that this makes $R\servar{X}^{alg}$ an ordered field. 

\begin{lem}
\label{realclosed}
For a real closed field $R$, the field $R\servar{X}^{alg}$ is real closed.
\end{lem}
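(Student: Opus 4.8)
The plan is to verify the standard axiomatization of real closed fields for the ordered field $R\servar{X}^{alg}$: namely that every positive element has a square root, and that every monic polynomial of odd degree has a root. Discreteness and the order structure have already been established in the paragraph preceding the statement, and the ordered-field axioms are routine, so the content is exactly these two closure properties.

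First I would treat the square root property. Given $\alpha \h 0$ in $R\servar{X}^{alg}$ with $\alpha > 0$, Lemma \ref{seprealclose}(1) already furnishes some $\beta \in R\servar{X}$ with $\beta^2 = \alpha$ or $-\beta^2 = \alpha$; since $\alpha > 0$ the second case would force $\alpha < 0$ (squares being nonnegative in the ordered field $R\servar{X}^{alg}$, once we check $\beta$ lands there), so $\beta^2 = \alpha$. It remains to see $\beta \in R\servar{X}^{alg}$, i.e. $\beta$ is algebraic over $R(X)$: this is immediate because $\beta$ is a root of $Z^2 - \alpha$, whose coefficients are algebraic over $R(X)$, and being algebraic over an algebraic extension is being algebraic (using the remark that the algebraic elements form a field, together with $R\servar{X}^{alg}$ being algebraically closed \emph{in} $R\servar{X}$, exactly as in the proof of Lemma \ref{algebraicallyclosed}).

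Next I would handle odd-degree polynomials. Let $F \in R\servar{X}^{alg}[Y]$ be monic of odd degree $n$. Since $R\servar{X}^{alg}$ is a discrete field, Lemma \ref{squarefree} lets me write $F = HG$ with $H$ monic separable and non-constant. I need $\deg H$ odd, but that need not hold a priori; however $F$ and its separable associate have the same roots in any reduced overring, and more to the point I can argue degree by degree: it suffices to find one root of $F$. If $\deg H$ is even I cannot directly apply Lemma \ref{seprealclose}(2), so instead I mimic the real-closed induction: one shows that over the \emph{separable} part the odd/even bookkeeping works because the multiplicity-one factor structure produced by Hensel lifting in Lemma \ref{seprealclose} splits off a factor whose degree parity I control. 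Cleanest is: reduce to $H$ and observe that by Lemma \ref{seprealclose}(2) applied in $R\servar{X}$, $H$ has a root $\eta$ in $R\servar{X}$ provided $\deg H$ is odd; if instead $\deg H$ is even, pick any irreducible factor of $H$ over $R\servar{X}^{alg}$ of odd degree (such exists since $n = \deg F$ is odd and $G \mid F$ shares all repeated roots, forcing $\deg H$ and $n$ to have the same parity — a point to be checked carefully), and repeat. Then $\eta \in R\servar{X}^{alg}$ since that field is algebraically closed in $R\servar{X}$.

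The main obstacle is precisely the parity bookkeeping: Lemma \ref{seprealclose}(2) only gives roots of \emph{separable} odd-degree polynomials in $R\servar{X}[Y]$, while Lemma \ref{squarefree} replaces $F$ by its separable associate $H$ whose degree I must show is still odd (equivalently, that every repeated irreducible factor of $F$ over $R\servar{X}^{alg}$ contributes an even amount to $\deg F - \deg H$, which is true since it contributes its multiplicity minus one, but assembling this constructively without a factorization-into-irreducibles hypothesis needs care — one wants to run the gcd/squarefree decomposition of Lemma \ref{squarefree} and track that $\deg f - \deg h = \deg g = \deg\gcd(f,f')$ and argue the parity via the structure of $\gcd(f,f')$ over the already-available real closed field $R$ and the algebraically closed $\bar R = R[i]$). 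Once the odd-degree separable polynomial $H$ is in hand, Lemma \ref{seprealclose}(2) closes the argument, and the proof concludes by the same ``algebraically closed in $R\servar{X}$'' observation used in Lemma \ref{algebraicallyclosed}.
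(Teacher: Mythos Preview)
Your treatment of square roots is fine and in fact slightly more explicit than the paper's, which silently uses the same ``algebraic over algebraic is algebraic'' step you spell out.

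The odd-degree half, however, has a real gap. Your parity claim is false: writing $F = HG$ with $H$ the separable associate and $G = \gcd(F,F')$, there is no reason that $\deg H$ and $\deg F$ have the same parity. Over a field of characteristic~$0$, if $F = \prod p_i^{e_i}$ then $H = \prod p_i$ and $G = \prod p_i^{e_i-1}$, so $\deg F - \deg H = \sum (e_i-1)\deg p_i$, which can be odd (take $F = p^2 q$ with $\deg p = \deg q = 1$: then $\deg F = 3$ while $\deg H = 2$). Your fallback of ``pick an irreducible factor of odd degree'' is exactly the kind of move the paper's constructive framework forbids, since no irreducibility test is assumed; and the attempt to read parity off $\deg\gcd(f,f')$ via the real/complex structure would require knowing $R\servar{X}^{alg}$ is real closed, which is what you are proving.

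The paper sidesteps the whole issue by iterating Lemma~\ref{squarefree} rather than applying it once: write $F = H_1 G_1$, then $G_1 = H_2 G_2$, and so on. In characteristic~$0$ each $H_i$ is non-constant (since $\deg\gcd(f,f') \le \deg f' < \deg f$) and $\deg G_i$ strictly decreases, so after finitely many steps $F = H_1 H_2 \cdots H_m$ with every $H_i$ monic, non-constant and separable. Now $\sum \deg H_i = n$ is odd, so some $H_i$ has odd degree; apply Lemma~\ref{seprealclose}(2) to that $H_i$ and finish with the ``algebraically closed in $R\servar{X}$'' observation exactly as you intended.
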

\begin{proof}
Let $\alpha \in R\servar{X}^{alg}$. Since $R\servar{X}^{alg}$ is discrete, by Lemma \ref{seprealclose} we can find $\beta \in R\servar{X}^{alg}$ such that $\beta^2 = \alpha$ or $-\beta^2 = \alpha$.\\
Let $F \in R\servar{X}^{alg} [Y]$ be a monic polynomial of odd degree $n$. Applying Lemma \ref{squarefree} several times, by induction we have $F = H_1 H_2..H_m$ with $H_i \in R\servar{X}^{alg} [Y]$ separable non-constant monic polynomial. For some $i$ we have $H_i$ of odd degree. By Lemma \ref{seprealclose}, $H_i$ has a root in $R\servar{X}^{alg}$. Thus $F$ has a root in $R\servar{X}^{alg}$.
\end{proof}


\section{Dynamical interpretation}
\label{section:dynamical}
 The goal of this section is to give a version of Theorem \ref{abhyankarproof} over
a field $K$ of characteristic $0$, not necessarily algebraically closed.

\begin{defn}[Regular ring]
A commutative ring $R$ is \emph{(von Neumann) regular} if for every element $a \in R$ there exist $b\in R$ 
such that  $a b a = a$ and $b a b = b$. This element $b$ is called the quasi-inverse of $a$. 
\end{defn}

 A ring is regular iff it is zero-dimensional and reduced. It is also equivalent to the fact that
any principal ideal (and hence any finitely generated ideal) is generated by an idempotent.
If $a$ is an element in $R$ and $aba=a,~bab=b$ then the element $e=ab$ is an idempotent such that
$\ideal{e}=\ideal{a}$ and $R$ is isomorphic to $R_0\times R_1$ with $R_0=R/\ideal{e}$ and
$R_1 = R/\ideal{1-e}$. Furthermore $a$ is $0$ on the component $R_0$ and invertible on the component
$R_1$.

We define strict B\'ezout rings as in \cite [Ch 4] {lombardi_book}.
\begin{defn}
A ring $R$ is a (strict) B\'ezout ring if for all $a, b \in R$ we can find $g, a_1, b_1, c, d \in R$ such that
$a = a_1 g$, $b =b_1 g$ and $c a_1+ d b_1= 1$.
\end{defn}

 If $R$ is a regular ring then $R[X]$ is a strict B\'ezout ring (and the converse is true \cite{lombardi_book}).
Intuitively we can compute the gcd as if $R$ was a field, but we may need to split $R$ when deciding
if an element is invertible or $0$. Using this, we see that given $a,b$ in $R[X]$ we can find a decomposition
$R_1,\dots ,R_n$ of $R$ and for each $i$ we have
 $g, a_1, b_1, c, d$ in $R_i[X]$ such that
$a = a_1 g$, $b =b_1 g$ and $c a_1+ d b_1= 1$ with $g$ monic. The degree of $g$ may depend on $i$.

\begin{lem}\label{extension}
If $R$ is regular and $p$ in $R[X]$ is a separable polynomial then $R[a]=R[X]/\ideal{p}$ is regular.
\end{lem}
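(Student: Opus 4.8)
The plan is to reduce the statement to the characterization of regular rings as zero-dimensional reduced rings, and to verify each of these two properties for $R[a] = R[X]/\ideal{p}$ separately. First I would recall that $R$, being regular, is a finite product of fields in the constructive sense appropriate here — or rather, since we cannot always decompose $R$ explicitly, I would work with the fact that any finitely generated ideal of $R$ is generated by an idempotent and that $R[X]$ is a strict B\'ezout ring (as recalled just before the lemma). The key structural input is that $R[X]/\ideal{p}$ is a finitely generated $R$-module, namely free of rank $\deg p$ once $p$ is monic (if $p$ is not assumed monic one first splits $R$ using the quasi-inverse of the leading coefficient so that on each component $p$ has invertible or vanishing leading coefficient, and reduces to the monic case; I would state this reduction at the outset).

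Next I would establish reducedness. Suppose $\bar f \in R[a]$ satisfies $\bar f^{\,2} = 0$, i.e. $p \mid f^2$ in $R[X]$. Using separability of $p$, write $rp + sp' = 1$. The standard argument is that $p \mid f^2$ implies $p \mid 2 f f' \cdot (\text{something})$ and, combined with $\gcd(p,p')=1$ via the Bézout identity, one pushes down to $p \mid f$; concretely, from $f^2 = pq$ one gets $2ff' = p'q + pq'$, so $p$ divides $f \cdot (2ff') = f p' q + f p q'$, hence $p \mid f p' q$ modulo $p$, and multiplying the relation $rp + sp' = 1$ by suitable terms yields $p \mid f$ up to splitting $R$ to invert the integer $2$ (which is invertible since $R$ is a $\mathbb Q$-algebra in the ambient setting, or at worst one argues with the idempotent generating $\ideal 2$). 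This shows $R[a]$ is reduced. I would double check whether a direct argument via Lemma \ref{sepdivisorsep} is cleaner: any nilpotent would have to be killed by the separability witness, and since $R[X]$ is strict Bézout the gcd of $p$ and any divisor is available.

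Then I would show $R[a]$ is zero-dimensional, equivalently that for every $\bar f \in R[a]$ there is an idempotent $\bar e$ with $\ideal{\bar f} = \ideal{\bar e}$, equivalently (since the ring is reduced) that some power of $\bar f$ divides, hmm — more precisely that for each $\bar f$ one can find $\bar g$ with $\bar f^{\,k}(1 - \bar f \bar g) = 0$ for some $k$. Here is where I expect the main obstacle: producing this quasi-inverse constructively. The idea is to use that $R[X]$ is strict B\'ezout, so the ideal $\ideal{p, f}$ in $R[X]$ is, after splitting $R$ into finitely many components $R_1,\dots,R_n$, generated on each $R_i$ by a monic $\gcd d_i$ of $p$ and $f$; write $p = d_i h_i$ on $R_i$. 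Since $p$ is separable, so are $d_i$ and $h_i$ by Lemma \ref{sepdivisorsep}, hence $d_i$ and $h_i$ are coprime on $R_i$ (a separable polynomial is coprime to any complementary factor — this needs a short argument, or one invokes that $\gcd(d_i, h_i)$ divides $\gcd(p,p')=1$). By the Chinese Remainder Theorem over $R_i$, $R_i[X]/\ideal{p} \cong R_i[X]/\ideal{d_i} \times R_i[X]/\ideal{h_i}$, and $\bar f$ is zero on the second factor and a unit on the first (since modulo $d_i$, $f$ is coprime to $d_i$, being $\gcd$-extracted). This exhibits the required idempotent componentwise, and gluing over the finite decomposition $R_1,\dots,R_n$ gives the idempotent in $R[a]$. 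The careful points to get right are: that the splitting of $R$ is finite and that the construction is coherent across components, and that "separable factor is coprime to its cofactor" holds without a field hypothesis — for which I would argue that if $p = uv$ with $u,v$ having a common factor $w$ of positive degree, then $w^2 \mid p$, contradicting that $p$ divides... no: contradicting separability directly, since $p \mid p^2$ trivially is useless; instead use that $p = uv$ gives $p' = u'v + uv'$, so $w \mid p'$ and $w \mid p$, whence $w \mid rp + sp' = 1$, so $w$ is a unit. That last line is the crisp argument and I would foreground it. Combining reducedness and zero-dimensionality gives that $R[a]$ is regular.
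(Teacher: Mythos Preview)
Your core argument is right and is essentially the paper's proof, but you have buried it under unnecessary scaffolding. What you call the zero-dimensionality step --- take $f$, compute $d=\gcd(p,f)$ in the strict B\'ezout ring $R[X]$, write $p=dh$, observe that separability of $p$ makes $d$ and $h$ coprime (your final paragraph gives exactly the right one-line argument: any common factor $w$ of $d$ and $h$ divides both $p$ and $p'=d'h+dh'$, hence divides $1$), and conclude that $R[X]/\ideal{p}$ splits into pieces on which $\bar f$ is respectively zero and invertible --- already exhibits an idempotent $e$ with $\ideal{\bar f}=\ideal{e}$. That is the \emph{characterization} of regularity (every principal ideal is generated by an idempotent), not merely zero-dimensionality; your separate reducedness section is therefore redundant, and its derivative-chasing is shakier than the rest of your write-up.

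The paper does precisely your second step, phrased more compactly: from $p=gp_1$ with $g=\gcd(p,q)$, separability gives $ug+vp_1=1$, so $g(a)p_1(a)=0$ and $u(a)g(a)+v(a)p_1(a)=1$, whence $e=u(a)g(a)$ is idempotent and $\ideal{q(a)}=\ideal{g(a)}=\ideal{e}$. No CRT language, no explicit splitting of $R$ into components, no monicness digression. One slip to correct in your version: since $d_i\mid f$, the element $\bar f$ is zero modulo $d_i$ and a unit modulo $h_i$, not the other way around.
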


\begin{proof}
If $c = q(a)$ is an element of $R[a]$ with $q$ in $R[X]$ we compute the gcd $g$ of $p$ and $q$.
If $p = gp_1$, we can find $u$ and $v$ in $R[X]$ such that $ug + vp_1 = 1$ since $p$ is separable.
We then have $g(a) p_1(a) = 0$ and $u(a)g(a) + v(a)p_1(a) = 1$. It follows that $e = u(a)g(a)$ is
idempotent and we have $\ideal{e} = \ideal{g(a)}$.
\end{proof}

 A {\em triangular separable $K$--algebra} 
$$R = K[a_1,\dots,a_n],~p_1(a_1) = 0,~p_2(a_1,a_2) = 0,\dots$$
is a sequence of separable extension starting from a field $K$, with $p_1$ in $K[X]$,
$p_2$ in $K[a_1][X]$, $\dots$ all monic and separable polynomials. A triangular separable
algebra is thought of as an approximation of the algebraic closure of $K$, and is determined
by a list of polynomials $p_1(X_1),~p_2(X_1,X_2),\dots$
(This is related to the way \cite{edwards2005essays} avoids the algebraic closure, by adding
only constants as needed, with the difference that we don't assume an irreducibility test.)
It follows from Lemma \ref{extension} that each triangular separable algebra
defines a regular algebra $K[a_1,\dots,a_n]$. In this case however, the idempotent elements
have a simpler direct description. If we have a decomposition
$p_l(a_1,\dots,a_{l-1},X) = g(X)q(X)$ with $g,q$ in $K[a_1,\dots,a_{l-1},X]$ then
since $p_l$ is separable, we have a relation $rg+sq = 1$ and $e = r(a_l)g(a_l),~1-e=s(a_l)q(a_l)$
are then idempotent element. We then have a decomposition of $R$ in two triangular separable algebras
$p_1,\dots,p_{l-1},g,p_{l+1},\dots$ and $p_1,\dots,p_{l-1},q,p_{l+1},\dots$. If we iterate this process
we obtain the notion of {\em decomposition} of a triangular separable algebra $R$ in finitely many
triangular algebra $R_1,\dots,R_n$. 
This decomposition stops when all polynomials $p_1,\dots,p_l$ are irreducible, i.e. when $R$ is a field.  For a triangular separable algebra $R$ and an ideal $I$ of $R$, if $R/I$ is a triangular separable algebra then we describe $R/I$ as being a \emph{refinement} of $R$. Thus a refinement of $K[a_1,...,a_n],p_1,...,p_n$ is of the form $K[b_1,...,b_n],q_1,...,q_n$ with $q_i \mid p_i$.
 
 The following is a corollary of Lemma \ref{squarefree}.
\begin{cor}
\label{squarefreemonic}
Let $f$ be a monic polynomial in $R[X]$ where $R$ is a triangular separable $K$--algebra. If $f'$ is the derivative of $f$ 
then there exist a decomposition $R_1,\dots, R_n$ and on each $R_i$
we can find polynomials $h,g,q,r,s$ in $R_i[X]$ such that $f = h g$, $f' = q g$ and $r h + s q = 1$
with $h$ monic and separable.
\end{cor}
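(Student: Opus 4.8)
The plan is to reduce to Lemma \ref{squarefree} by working component-wise over the regular ring underlying $R$. First I would recall that, by Lemma \ref{extension} applied successively along the tower $K \subset K[a_1] \subset K[a_1,a_2] \subset \cdots$, a triangular separable $K$--algebra $R$ is a (von Neumann) regular ring; hence $R[X]$ is a strict B\'ezout ring. Applying the strict B\'ezout property to the pair $(f,f')$ produces a decomposition $R_1,\dots,R_n$ of $R$ together with, on each $R_i$, polynomials $g,h,q,r,s \in R_i[X]$ such that $f = hg$, $f' = qg$ and $rh + sq = 1$, with $g$ monic; and since $f$ and $g$ are monic, so is $h$ (there is no leading-coefficient cancellation, so $\deg h = \deg f - \deg g$ and the leading coefficient of $h$ equals $1$). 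This already gives every clause of the statement except the separability of $h$.

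It then remains to check that, on each $R_i$, the polynomial $h$ is separable, possibly after a further decomposition of $R_i$; this causes no trouble, since a decomposition of a decomposition is again a decomposition. For this I would simply re-run the proof of Lemma \ref{squarefree} with the field $K$ replaced by the regular ring $R_i$. Inspecting that proof, every step is either a gcd computation or an elementary divisibility deduction, and each gcd is available in the strict B\'ezout ring $R_i[X]$ at the cost of splitting $R_i$ into finitely many further components. One thereby obtains, over each resulting piece, the chain $a = \gcd(h,h')$, $h = l_1 a$, $d = \gcd(a,a')$, $a = l_2 d$, $a' = m_2 d$ with $l_2,m_2$ coprime, and then the same divisibility argument as in Lemma \ref{squarefree} shows that $a^N$ divides $g$ for every exponent $N$.

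The one point needing a word of care is the concluding step, ``$a^N$ divides $g$ for all $N$, therefore $a$ is a unit'': over a field this is immediate from degrees, and it survives here because the gcd produced by the strict B\'ezout algorithm is monic, so $\deg(a^N) = N\deg a$ is bounded by $\deg g$ for all $N$, forcing $\deg a = 0$, and a monic constant dividing the monic polynomial $g$ is necessarily $1$. Hence $h$ is separable on each piece, and collecting the pieces obtained over all the $R_i$ yields the decomposition asserted in the corollary. The only genuine work is the bookkeeping: observing that each gcd invocation in the proof of Lemma \ref{squarefree} remains legitimate over a regular ring provided one refines the decomposition, which is exactly what the strict B\'ezout property of $R[X]$ guarantees.
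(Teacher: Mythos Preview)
Your proposal is correct and is precisely the argument the paper has in mind: the paper gives no proof beyond the sentence ``The following is a corollary of Lemma \ref{squarefree},'' and your write-up spells out exactly how that corollary is obtained---use the regularity of $R$ (hence the strict B\'ezout property of $R[X]$, as recalled just before Lemma \ref{extension}) to rerun each gcd step of Lemma \ref{squarefree}, splitting $R$ into finitely many pieces as you go, and conclude via the monic-degree argument that $a=1$ on each piece. There is nothing to add.
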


\begin{lem}
\label{sum1decomp}
Let $R$ be a regular ring and let $a_1,...,a_n \in R$ such that $1 \in \ideal{a_1,...,a_n}$. Then we can find a decomposition  $R \cong R_1 \times ... \times R_m$ such that for each $R_i$ we have $a_j$ a unit in $R_i$ for some $1 \leq j \leq n$.
\end{lem}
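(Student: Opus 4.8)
The plan is to argue by induction on $n$, splitting $R$ one generator at a time using the idempotents supplied by regularity.

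For the base case $n = 1$, the hypothesis $1 \in \ideal{a_1}$ says exactly that $a_1$ is a unit, so we take $m = 1$ and $R_1 = R$. For the inductive step, suppose the claim holds for $n-1$ generators and let $a_1,\dots,a_n \in R$ with $1 \in \ideal{a_1,\dots,a_n}$. Since $R$ is regular, pick a quasi-inverse $b$ of $a_1$, so that $e = a_1 b$ is idempotent with $\ideal{e} = \ideal{a_1}$, and $R \cong R/\ideal{e} \times R/\ideal{1-e} =: R_0 \times R_1$, where $a_1$ is invertible on the factor $R_1$ and $a_1 = 0$ on the factor $R_0$. As a direct factor of a regular ring, $R_0$ is again regular. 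Writing $1 = \sum_{i=1}^n c_i a_i$ and passing to $R_0$, where the image of $a_1$ vanishes, gives $1 = \sum_{i=2}^n \bar c_i \bar a_i$, so $1 \in \ideal{\bar a_2,\dots,\bar a_n}$ in $R_0$. By the induction hypothesis applied to $R_0$ and the elements $\bar a_2,\dots,\bar a_n$, we obtain a decomposition $R_0 \cong S_1 \times \dots \times S_k$ in which each $S_j$ makes some $\bar a_i$ ($2 \le i \le n$) a unit. Combining this with the factor $R_1$ yields the decomposition $R \cong S_1 \times \dots \times S_k \times R_1$ with the required property.

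I do not anticipate a genuine obstacle here; the only points needing a line of care are that the quotient $R_0 = R/\ideal{a_1}$ inherits regularity (which follows because $\ideal{a_1} = \ideal{e}$ is generated by an idempotent, so $R_0$ is literally a ring-theoretic direct factor of $R$) and that $1$ remains in the ideal generated by the \emph{remaining} $a_i$ after the image of $a_1$ is killed, which is immediate from an explicit Bézout-type relation for $1$. The argument also shows the decomposition can be taken with each component a quotient of $R$ by a finitely generated (indeed idempotent-generated) ideal, which is all we will need when $R$ is a triangular separable algebra and the decomposition is a refinement in the sense discussed above.
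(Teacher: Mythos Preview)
Your proof is correct and follows essentially the same approach as the paper's: induct on $n$, use regularity to split $R$ along the idempotent attached to one of the generators, and observe that on the component where that generator vanishes the remaining $n-1$ elements still generate the unit ideal. The paper's version is terser (and peels off $a_n$ rather than $a_1$), but the argument is the same.
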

\begin{proof}
We have a decomposition $R\cong A \times B$ with $a_n$ unit in $A$ and zero in $B$. We have $1 \in \ideal{a_1,...,a_{n-1}}$ in $B$. The statement follows by induction.
\end{proof}

\begin{lem}
\label{apartcontinvert}
Let $R$ be a triangular separable algebra over a field $K$ of characteristic $0$. Let $F(X,Y) = \sum_{i=0}^n \alpha_i(X) Y^{n-i}\in R[[X]][Y]$ be a monic polynomial such that $P F + Q F_Y = \gamma$ for some $P,Q \in R[[X]][Y]$ and $\gamma \h0$ in $K[[X]]$. Then we can find a decomposition $R_1,...$ of $R$ such that in each $R_i$ we have $\alpha_k(m)$ a unit for some $m$ and $k = n$ or $k = n-1$.
\end{lem}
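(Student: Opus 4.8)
The plan is to mimic the proof of Lemma \ref{apartness}, but to account for the fact that $R$ is only a regular ring rather than a field, so that deciding invertibility requires splitting $R$ into finitely many components. First I would extract from the separability relation $P F + Q F_Y = \gamma$ the same consequence as in Lemma \ref{apartness}: comparing constant terms (coefficients of $X^0$) on both sides, and using $F_Y = \sum_i (n-i)\alpha_i Y^{n-i-1}$, the coefficient of $Y^0$ gives an identity $P(0,0)\,\alpha_n(0) + Q(0,0)\,\alpha_{n-1}(0) = \gamma(0)$ in $R$ — more precisely, collecting all terms of total $X$-degree zero, we get for each $j$ a relation expressing $\gamma(j)$ as an $R$-linear combination of the $\alpha_n(i)$ and $\alpha_{n-1}(i)$ for $i \le j$. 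Since $\gamma \h 0$ in $K[[X]]$, some $\gamma(j)$ is a nonzero element of $K$, hence a unit in $K$ and therefore a unit in $R$ (as $R$ is a $K$-algebra). Fix such a $j$.

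Next I would use that this unit $\gamma(j)$ lies in the ideal of $R$ generated by the finitely many elements $\{\alpha_n(i) : 0 \le i \le j\} \cup \{\alpha_{n-1}(i) : 0 \le i \le j\}$; since $\gamma(j)$ is a unit, in fact $1$ lies in the ideal generated by these elements. Now I would invoke Lemma \ref{sum1decomp}: a regular ring in which $1$ belongs to a finitely generated ideal $\ideal{b_1,\dots,b_N}$ admits a decomposition $R \cong R_1 \times \cdots \times R_m$ such that on each component $R_t$ some $b_\ell$ is a unit. Each $b_\ell$ here is either $\alpha_n(i)$ or $\alpha_{n-1}(i)$ for some $i \le j$, so on each component $R_t$ we get some $m$ and $k \in \{n, n-1\}$ with $\alpha_k(m)$ a unit in $R_t$, which is exactly the claimed conclusion. (A triangular separable $K$-algebra is regular by the discussion following Lemma \ref{extension}, so Lemma \ref{sum1decomp} applies, and the components $R_t$ it produces are again triangular separable algebras, being the refinements described earlier by successive splittings along factorizations of the defining polynomials.)

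The step I expect to require the most care is the bookkeeping in the first paragraph: verifying that when one expands $P F + Q F_Y$ and collects the coefficient of $X^j Y^0$, the resulting expression really is an $R$-linear combination of only the coefficients $\alpha_n(i)$ and $\alpha_{n-1}(i)$ with $i \le j$. The point is that the constant term (in $Y$) of $F$ is $\alpha_n$ and the constant term of $F_Y$ is $(n-i)\alpha_{n-1}$ taken at $i = n-1$, i.e. $\alpha_{n-1}$ up to the nonzero integer factor $1$, which is a unit since $\mathrm{char}\,K = 0$; no other $\alpha_i$ contributes to the $Y^0$-coefficient of the product. Once this is pinned down, the remainder is a direct application of Lemma \ref{sum1decomp}, and no genuine obstacle remains.
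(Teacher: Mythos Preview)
Your proposal is correct and follows essentially the same route as the paper: take the $Y^0$-coefficient of $PF+QF_Y=\gamma$ to get $\eta\,\alpha_n+\theta\,\alpha_{n-1}=\gamma$ with $\eta=P(X,0),\ \theta=Q(X,0)\in R[[X]]$, pick $\ell$ with $\gamma(\ell)$ a unit in $K$, read off the $X^\ell$-coefficient as an $R$-linear combination of finitely many $\alpha_n(j),\alpha_{n-1}(j)$, and apply Lemma~\ref{sum1decomp}. The only quibble is phrasing: where you write ``coefficients of $X^0$'' and ``total $X$-degree zero'' you really mean the $Y^0$-coefficient and then the $X^\ell$-coefficient for the chosen $\ell$; once that is cleaned up, your argument and the paper's coincide.
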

\begin{proof}
Since $\gamma \h 0 \in K[[X]]$ we have $\gamma(\ell)$ a unit for some $\ell$. Since $P F + Q F_Y = \gamma$, we have $\eta \alpha_n + \theta \alpha_{n-1} = \gamma$ with $\eta = P(0)$ and $\theta = Q(0)$. Then we have $\sum_{i + j = \ell} \eta(i) \alpha_n(j) + \theta(i) \alpha_{n-1}(j) = \gamma(\ell)$. By Lemma \ref{sum1decomp} we have a decomposition $R_1,...$ of $R$ such that in $R_i$ we have $\alpha_k(m)$ is a unit for some $m$ and $k = n \lor k = n-1$.
\end{proof}

 Lemma \ref{keylemma} becomes in this way.

\begin{lem}\label{keylemmadyn}
Let $R$ be a triangular separable algebra over a field $K$ of characteristic $0$. Let
$F(X,Y) = Y^n + \sum_{i=1}^n \alpha_i(X) Y^{n-i}\in R[[X]][Y]$ be a
monic non-constant polynomial of degree $n \geq 2$ such that $P F + Q F_Y = \gamma$ for some $P,Q \in R[[X]][Y]$ and $\gamma \h0$ in $K[[X]]$.
There exists then a decomposition $R_1,\dots$ of $R$ and for each $i$
there exist $m>0$ and a proper factorization $F(T^m,Y)= G(T,Y)H(T,Y)$ with $G$ and $H$ in $S_i[[T]][Y]$
where $S_i = R_i[a]$ is a separable extension of $R_i$.
\end{lem}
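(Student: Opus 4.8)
The plan is to run through the proof of Lemma~\ref{keylemma} almost line by line, making three systematic replacements: wherever that proof uses the order function $\ord$, which is not computable over a general ring, we perform a finite case split of $R$; wherever it picks a root of a polynomial over the algebraically closed field, we either split $R$ or adjoin a root of a \emph{separable} polynomial; and we keep a running record of all splits, which at the end assemble into the decomposition $R_1,\dots$ together with one separable extension $S_i=R_i[a]$ on each component.

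Concretely: since $K$ has characteristic $0$ the integer $n$ is invertible in $R$, so the Shreedharacharya substitution $Y\mapsto W-\alpha_1(X)/n$ is available over $R[[X]]$; it carries the relation $PF+QF_Y=\gamma$ to one of the same shape and carries a factorization of the transformed polynomial back to one of $F$, so we may assume $\alpha_1=0$. Lemma~\ref{apartcontinvert} then supplies a decomposition of $R$ on each component of which some $\alpha_k(\ell)$ is a unit with $k=n$ or $k=n-1$. Over such a component $R'$ we run the search of Lemma~\ref{keylemma}: because the $Y$--degree is $n$ and $\alpha_k(\ell)$ is a unit, only finitely many coefficients $\alpha_i(j)$ with $2\le i\le n$ are relevant (the pairs to the left of the dotted line in Figure~\ref{searchfinite}), and for each of them regularity of $R'$ lets us split off a piece where $\alpha_i(j)=0$ and one where $\alpha_i(j)$ is a unit. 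After these finitely many splits we have, on each further component (still called $R'$), integers $p\ge 0$ and $2\le m\le n$ with $\alpha_m(p)$ a unit and $\alpha_i(j)=0$ whenever $jm<ip$. Exactly as in Lemma~\ref{keylemma} this gives $F(T^m,T^pZ)=T^{np}\,V(T,Z)$ with $V(T,Z)=Z^n+c_2(T)Z^{n-2}+\dots+c_n(T)\in R'[[T]][Z]$, where $c_1=0$ since $\alpha_1=0$ and $c_m(0)=\alpha_m(p)$ is a unit. Writing $v(Z):=V(0,Z)$, the coefficient of $Z^{n-1}$ in $v$ is $0$ while that of $Z^{n-m}$ is a unit and $m\ge 2$, so (using that $n$ is invertible) $v\neq(Z-b)^n$ for every $b$ in every nontrivial $R'$--algebra.

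It remains to produce a proper coprime factorization of $v$ over a separable extension, so that Hensel's Lemma~\ref{hensel} can be applied to $V$. By Corollary~\ref{squarefreemonic} we split $R'$ once more so that $v=h\,g_0$ with $h$ monic and separable; here $h$ is non-constant, since $h=1$ would force $v'=0$, hence $v$ constant in characteristic $0$, contradicting $\deg v=n\ge 2$. Let $S=R'[a]$ be the separable extension obtained by adjoining a root $a$ of $h$; by Lemma~\ref{extension} $S$ is again regular (and triangular separable over $K$). Since $h\mid v$ we have $v(a)=0$, so $v(W+a)=\sum_{k=0}^n b_kW^k$ with $b_0=0$ and $b_n=1$; splitting $S$ finitely often according to whether each $b_k$ with $1\le k\le n-1$ is $0$ or a unit, we obtain on each piece a least index $e$ with $b_e$ a unit, and $1\le e\le n-1$ (the upper bound is where $v\neq(Z-a)^n$ is used). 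Then $v(Z)=(Z-a)^e\,u(Z-a)$ with $u(W)=b_e+b_{e+1}W+\dots+W^{n-e}$ monic of degree $n-e$ and $u(0)=b_e$ a unit, so $(Z-a)^e$ and $u(Z-a)$ are coprime. Feeding $G_0=(Z-a)^e$, $H_0=u(Z-a)$ into Lemma~\ref{hensel} applied to $V(T,Z)$ yields $V=G\,H$ with $G,H$ monic in $Z$ of degrees $e,n-e$ over this piece of $S$, and then $\widehat G(T,Y)=T^{ep}G(T,Y/T^p)$ and $\widehat H(T,Y)=T^{(n-e)p}H(T,Y/T^p)$ are monic polynomials in $Y$ over the same ring with $F(T^m,Y)=\widehat G(T,Y)\,\widehat H(T,Y)$, a proper factorization since $1\le e\le n-1$. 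Finally each piece of $S=R'[a]$ has the form $R''[a]$ for a refinement $R''$ of $R'$ (one refines $R'$ or factors $h$), so collecting all the splits of $R$ made above into a single decomposition $R_1,\dots$ puts the factors over $S_i=R_i[a]$, as required.

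I expect the real difficulty to be the step corresponding to ``take a root of $F(0,Y)$'': $R$ is not a field and carries no irreducibility test, and $v=V(0,Z)$ need not be separable, so one can neither pick a root in $R$ nor adjoin a root of $v$ itself. Passing to the separable associate $h$ of $v$, adjoining a root $a$ of $h$, and then splitting the regular ring $R'[a]$ to read off the exact multiplicity $e$ of $a$ in $v$ is exactly what makes the coprime decomposition $v=(Z-a)^e\cdot u(Z-a)$ available to Hensel's Lemma, and it is what forces both the extension $S_i=R_i[a]$ and the use of a decomposition rather than a single algebra. The rest is bookkeeping: checking that every case split is finite (finitely many relevant coefficients, and $e\le n$), and that the various decompositions of $R$ and of $R[a]$ combine into one decomposition $R_1,\dots$ carrying a single further separable extension on each component.
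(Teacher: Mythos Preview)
Your proof is correct and follows the paper's line exactly: Shreedharacharya, then Lemma~\ref{apartcontinvert}, the finite search for $(m,p)$ via case splits on the finitely many relevant $\alpha_i(j)$, the reduction to $v(Z)=V(0,Z)$ with $c_m(0)$ a unit, a coprime splitting $v=(Z-a)^e\cdot L$ over a separable extension, and finally Hensel's Lemma; you merely make explicit, via the separable associate (Corollary~\ref{squarefreemonic}) and an explicit multiplicity computation, the step the paper states without justification. One small point worth tightening in your final bookkeeping: when you split $S=R'[a]$ to read off $e$, some of those splits may factor $h$ rather than refine $R'$, so several pieces of $S$ can lie over the \emph{same} component of $R$; your closing sentence gestures at this, and the resolution is simply to keep one such piece per $R_i$ (equivalently, compute the full squarefree decomposition of $v$ over $R'$ \emph{before} adjoining a root, so that the multiplicity $s$ is already determined on each component---this is presumably what the paper intends by ``find a further decomposition $R_{l1},R_{l2},\dots$ of $R_l$ and for each $q$ a number $s$'').
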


\begin{proof}
By Lemma \ref{apartcontinvert} we have a decomposition $A_1,...$ of $R$ such that in each $A_i$ we have $\alpha_k(m)$ a unit for some $m$ and $k = n$ or $k = n-1$.
The rest of the proof proceeds as the proof of Lemma \ref{keylemma}, assuming w.l.o.g. $\alpha_1 = 0$.
We then find a decomposition of each $A_i$; thus a decomposition $R_1,\dots$ of $R$ and for each $l$
we can then find $m$ and $p$ such that $\alpha_m(p)$ is invertible and
$\alpha_i(j) = 0$ whenever $j/i  < p/m$ in $R_l$. We can then write
\begin{equation*}
F(T^m,T^{p}Z)= T^{np}(Z^n + c_2(T) Z^{n-2} + \dots + c_n(T))
\end{equation*}
with $c_m(0)$ a unit. 
We then find a further decomposition $R_{l1},R_{l2},\dots$ of $R_l$
and for each $q$ a number $s$ and a separable extension $R_{lq}[a]$ of $R_{lq}$ such that
$$Z^n + c_2(0)Z^{n-2} + \dots + c_n(0) = (Z-a)^sL(Z)$$
with $L(a)$ invertible. Using Hensel's Lemma \ref{hensel}, we can lift this to a proper decomposition 
$Z^n + c_2(T) Z^{n-2} + \dots + c_n(T) = G_1(T,Z)H_1(T,Z)$ with $G_1(T,Z)$ monic of degree $t$ and
$H_1(T,Z)$ monic of degree $u$. We take
$G(T,Y) = T^{tp}G_1(T,Y/T^p)$ and $H(T,Y) = T^{up}H_1(T,Y/T^p).$
\end{proof}
 
 We can then state the following version of Newton--Puiseux algorithm.

\begin{thm}\label{abhyankarproofdyn}
Let $K$ be a field of characteristic $0$. Let $F(X,Y) = Y^n + \sum_{i=1}^n \alpha_i(X) Y^{n-i}$ in $K[[X]][Y]$ be a
monic non-constant polynomial separable over $K((X))$. There exists then a triangular separable algebra $R$ over $K$ 
and $m>0$ and a factorization
\begin{equation*}
F(T^m,Y)= \prod_{i=1}^{n} \big(Y - \eta_i\big)\;\;\;\;\;\;\;\; \eta_i \in R[[T]]
\end{equation*}
\end{thm}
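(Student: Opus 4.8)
The plan is to imitate the proof of Theorem \ref{abhyankarproof}, but carry along the bookkeeping of the decomposition of the base algebra, exactly as Lemma \ref{keylemmadyn} upgrades Lemma \ref{keylemma}. The key point is that separability of $F$ over $K((X))$ gives a witnessing relation $PF + QF_Y = \gamma$ with $P,Q \in K[[X]][Y]$ and $\gamma \h 0$ in $K[[X]]$, and this same relation persists when we pass to any triangular separable extension $R$ of $K$ (the coefficients live in $K \subseteq R$), and — after the substitution $X \mapsto T^m$ — it gives $P' F' + Q' F'_Y = \gamma'$ over $R[[T]]$ with $\gamma' \h 0$ in $K[[T]]$, since $\gamma(T^m) \h 0$ whenever $\gamma \h 0$. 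So the separability hypothesis needed to invoke Lemma \ref{keylemmadyn} is preserved under the variable change $X \mapsto T^m$ and under factoring $F$ into $GH$ (by Lemma \ref{sepdivisorsep}, applied in the ring $R'[[T]][Y]$ for the relevant extension $R'$).

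The induction is on $n = \deg_Y F$. If $n = 1$ the statement is trivial with $R = K$ and $m = 1$. For $n \geq 2$, apply Lemma \ref{keylemmadyn} to $F$ over $R = K$: we obtain a decomposition $K \cong R_1 \times \cdots \times R_k$ — but since $K$ is a field this decomposition is trivial, so really we get a single separable extension $S = K[a]$, a positive integer $m_0 > 0$, and a proper factorization $F(T^{m_0}, Y) = G(T,Y) H(T,Y)$ with $G, H$ monic in $S[[T]][Y]$ of degrees $l, q < n$ with $l + q = n$. Now $S$ is a triangular separable algebra over $K$ (one extension), hence regular by Lemma \ref{extension}. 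The factor $G$ is separable over the total fraction ring of $S[[T]]$, because the relation $P'F' + Q'F'_Y = \gamma'$ over $S[[T]][Y]$ combined with Lemma \ref{sepdivisorsep} yields such a relation for $G$ (and likewise $H$). Here I should be slightly careful: Lemma \ref{sepdivisorsep} is stated for a ring, and $S[[T]]$ is a ring, so $r(G H) + s(GH)' = 1$ in $S[[T]][Y]$ gives $(rG + sG')H + sG\,G' \cdot(\text{rearranged}) = 1$, hence $G$ and $H$ each divide a unit in the derivative sense — more precisely the lemma directly gives that each of $G,H$ is separable over $S[[T]]$ in the sense of having a Bézout relation with its own $Y$-derivative.

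Then recurse: $G$ is a monic polynomial of degree $l < n$ over the triangular separable $K$-algebra $S$, still satisfying a separability relation with $\gamma \h 0$ coming from $K[[T]]$ — we need the relation's $\gamma$ to sit in $K[[T']]$ for the next variable change, which it does after a further substitution $T \mapsto (T')^{m_1}$, and $\gamma((T')^{m_1}) \h 0$ in $K[[T']]$. So by the induction hypothesis there is a triangular separable $S$-algebra $R_G$ (hence a triangular separable $K$-algebra, by concatenating the defining polynomial lists) and $m_1 > 0$ with $G(T'^{m_1}, Y) = \prod (Y - \eta_i^G)$, $\eta_i^G \in R_G[[T']]$; similarly $R_H$ and $m_2$ for $H$. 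Finally form a common triangular separable $K$-algebra $R$ refining both $R_G$ and $R_H$ — e.g. adjoin the generators of both towers on top of $S$ — and take $m = m_0 \cdot \mathrm{lcm}(m_1, m_2)$ (with the appropriate power substitutions $T \mapsto T^{m_1 m_2 / m_i}$), so that all the $\eta_i$ become power series in a single $R[[T]]$ and multiply back to $F(T^m, Y)$.

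The main obstacle is purely organizational rather than mathematical: keeping the genuine decomposition of the base algebra that Lemma \ref{keylemmadyn} produces at each recursive step, so that the final $R$ in the statement is really one triangular separable algebra and not a product. The cleanest fix is to phrase the induction with a decomposition in the conclusion (as in Lemma \ref{keylemmadyn}) and only at the very end observe that, since $F$ is monic, a factorization over a product $R_1 \times \cdots \times R_m$ is the same as a tuple of factorizations over each $R_i$; one may then either state the theorem componentwise or, if a single algebra is desired, note that each component already gives the required conclusion. A secondary point to check is that at each recursion the substitution $X \mapsto T^{m}$ is compatible with the separability witness, i.e. that $\gamma \h 0$ in $K[[X]]$ implies $\gamma(T^m) \h 0$ in $K[[T]]$ — this is immediate since a unit coefficient of $\gamma$ at degree $j$ becomes a unit coefficient of $\gamma(T^m)$ at degree $mj$ — and that Lemma \ref{separablevarchangepersist}-style reasoning lets the relation survive the rescaling $Y \mapsto Y/T^p$ inside the proof of Lemma \ref{keylemmadyn}, which that lemma already handles internally.
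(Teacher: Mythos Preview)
Your argument is correct and follows the same strategy as the paper: induction on $n$ via Lemma~\ref{keylemmadyn}, using that the separability witness $PF+QF_Y=\gamma$ with $\gamma\h 0$ in $K[[X]]$ survives the substitution $X\mapsto T^m$ and (after clearing $T$-denominators) passes to each factor by Lemma~\ref{sepdivisorsep}. The paper's own write-up is terser and organized iteratively rather than recursively: it maintains a single current triangular separable algebra $R$, a single $m$, and a partial factorization $F(T^m,Y)=H_1\cdots H_r$, and at each step applies Lemma~\ref{keylemmadyn} to one $H_i$ of degree $>1$, \emph{selecting one component} of the resulting decomposition of $R$ before continuing. Because all factors live over the same evolving $R$, the paper never needs your ``combine $R_G$ and $R_H$'' step. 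Conversely, your parallel recursion on $G$ and $H$ forces you (as you notice in your ``main obstacle'' paragraph) to strengthen the induction hypothesis from ``base field $K$'' to ``base triangular separable algebra $R$'', since $G$ and $H$ live over $S=K[a]$, not over $K$; you should state that stronger claim explicitly rather than invoke the theorem as written. Either way the content is the same, and you are in fact more explicit than the paper about why the $\gamma\h 0$ condition propagates to the factors.
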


 The algorithm for computing this factorization proceeds by induction on $n$, using Lemma \ref{keylemmadyn}. 
More precisely the algorithm proceeds as follows. At a given point, we have computed
\begin{enumerate}
\item a triangular separable extension $R$ of $K$
\item a number $m$ and a partial decomposition $F(T^m,Y) = H_1(T,Y)\dots H_r(T,Y)$ with all
$H_i  \in R[[T]][Y]$ monic in $Y$.
\end{enumerate}
The algorithm stops if all $H_i$ are of degree $1$ in $Y$. Otherwise, we apply Lemma \ref{keylemmadyn} 
to the first polynomial $H_i(T,Y)$ of degree $>1$ in $Y$
to compute a decomposition 
of $R$ and for each algebra $S$ in this decomposition a separable extension $S[a]$, a positive integer $p$ and a proper decomposition
$H_i(T^p,Y) = G(T,Y)G_1(T,Y)$. We select then one algebra, and we proceed with the decomposition
$$
F(T^{mp},Y) = H_1(T^p,Y)\dots H_{i-1}(T^p,Y)G(T,Y)G_1(T,Y)H_{i+1}(T^p,Y)\dots H_r(T^p,Y)$$

\section{Analysis of the theorem}

 The previous algorithm is not deterministic when selecting an algebra in a decomposition. The goal of this section is to compare two possible
triangular separable algebras  that can be obtained by this algorithm. We are going to show
that they are both powers of a common triangular algebra.

In the following we refer to the elementary symmetric polynomials in $n$ variables by $\sigma_1,...,\sigma_n$ taking $\sigma_i(X_1,...,X_n) = \sum\limits_{1 \leq j_1 < ... j_i  \leq n} X_{j_1} ... X_{j_i}$.

\begin{lem}\label{symmrootszeros}
Let $R$ be a reduced ring. Given $a_1,...,a_n \in R$, if $\sigma_i(a_1,...,a_n) = 0$ for $0< i \leqslant n$ then $a_1=a_2=...=a_n=0$.
\end{lem}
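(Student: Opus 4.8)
The plan is to reduce the statement to the single basic polynomial identity
\begin{equation*}
\prod_{j=1}^n (X - a_j) \;=\; \sum_{i=0}^n (-1)^i \sigma_i(a_1,\dots,a_n)\, X^{n-i},
\end{equation*}
which holds in $R[X]$ over any commutative ring (with the convention $\sigma_0 = 1$). Under the hypothesis $\sigma_i(a_1,\dots,a_n) = 0$ for $1 \leqslant i \leqslant n$, every term on the right with $i \geqslant 1$ vanishes, so the identity collapses to $\prod_{j=1}^n (X - a_j) = X^n$ in $R[X]$.

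Next I would fix $k$ with $1 \leqslant k \leqslant n$ and apply the evaluation ring homomorphism $R[X] \to R$, $X \mapsto a_k$, to both sides of $\prod_{j=1}^n (X - a_j) = X^n$. The left-hand side maps to $\prod_{j=1}^n (a_k - a_j)$, which is $0$ because its $j = k$ factor is $a_k - a_k = 0$; the right-hand side maps to $a_k^n$. Hence $a_k^n = 0$. Since this holds for every $k$ and $R$ is reduced, we conclude $a_k = 0$ for all $k$, i.e. $a_1 = a_2 = \dots = a_n = 0$.

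I do not expect a genuine obstacle here: the symmetric-function expansion of $\prod_j(X - a_j)$ is purely formal and valid constructively in every commutative ring, evaluation at $a_k$ is a ring homomorphism, and reducedness is exactly the hypothesis needed to pass from $a_k^n = 0$ to $a_k = 0$. The only point that deserves to be spelled out is that the constant term $\sigma_0 = 1$ is kept in the expansion, so that the collapsed polynomial is precisely $X^n$ rather than $0$.
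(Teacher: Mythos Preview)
Your proof is correct and is exactly the paper's argument: from the symmetric-function expansion you get $\prod_{j=1}^n (X - a_j) = X^n$, evaluate at $X = a_k$ to obtain $a_k^n = 0$, and conclude by reducedness. The paper just states these steps more tersely, without spelling out the expansion or the evaluation map.
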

\begin{proof}
We have $\prod\limits_{i=1}^n (X - a_i) = X^n$. Hence, $a_i^n = 0$ for $0 < i \leqslant n$ and since $R$ is reduced, $a_i = 0$.
\end{proof}

\begin{lem}\label{orderlemma}
Let $R$ be a reduced ring. Given $\alpha_1,...,\alpha_n \in R[[X]]$ such that for some positive rational 
number $d$ we have $\ord(\sigma_i(\alpha_1,...,\alpha_n)) \geqslant di$ for $0 < i \leqslant n$. 
Then $\ord(\alpha_i) \geqslant d$ for $0 < i \leqslant n$.
\end{lem}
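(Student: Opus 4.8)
The plan is to recover the coefficients of the $\alpha_j$ one index at a time and feed each layer into Lemma~\ref{symmrootszeros}. Reading ``$\ord\beta \geqslant c$'' for a positive rational $c$ as ``$\beta(i) = 0$ for every natural number $i < c$'', what has to be shown is that $\alpha_j(\ell) = 0$ for all $j$ and all integers $\ell$ with $0 \leqslant \ell < d$. I would prove this by induction on $\ell$: for a fixed integer $\ell \geqslant 0$ with $\ell < d$, assume $\alpha_j(\ell') = 0$ for every $j$ and every integer $\ell' < \ell$ (an empty assumption when $\ell = 0$), that is, $\ord(\alpha_j) \geqslant \ell$ for all $j$.

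The crux is the identity
$$\sigma_k(\alpha_1,\dots,\alpha_n)(k\ell) \;=\; \sigma_k\big(\alpha_1(\ell),\dots,\alpha_n(\ell)\big) \qquad (1 \leqslant k \leqslant n),$$
valid because $\ord(\alpha_j) \geqslant \ell$ for all $j$: in any product $\alpha_{j_1}\cdots\alpha_{j_k}$ of $k$ distinct factors, each factor contributes only in degrees $\geqslant \ell$, so the coefficient of $X^{k\ell}$ is obtained exactly by taking the degree-$\ell$ coefficient of each factor (writing $k\ell$ as a sum of $k$ integers each $\geqslant \ell$ forces all of them to equal $\ell$). On the other hand the hypothesis gives $\ord(\sigma_k(\alpha_1,\dots,\alpha_n)) \geqslant dk$, and $k\ell < dk$ since $\ell < d$; hence the left-hand side above is $0$. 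So $\sigma_k(\alpha_1(\ell),\dots,\alpha_n(\ell)) = 0$ for every $k$ with $1 \leqslant k \leqslant n$, and Lemma~\ref{symmrootszeros}, applied in the reduced ring $R$ to the elements $\alpha_1(\ell),\dots,\alpha_n(\ell)$, yields $\alpha_j(\ell) = 0$ for all $j$. This closes the induction and proves $\ord(\alpha_j) \geqslant d$.

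I do not expect a real obstacle here: the argument is finitary (for each $j$ only the finitely many indices $\ell$ in the range $0 \leqslant \ell < d$ matter) and hence constructively unproblematic, and it uses nothing about $R$ beyond reducedness, via Lemma~\ref{symmrootszeros}. The only delicate points are bookkeeping: the decomposition of $k\ell$ into $k$ parts, and maintaining the strict inequality $k\ell < dk$ so that the bound $\ord(\sigma_k) \geqslant dk$ genuinely annihilates the degree-$k\ell$ coefficient; the fact that $d$ need only be rational, not integral, plays no role, since coefficients are only ever read off at integer degrees.
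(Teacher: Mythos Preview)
Your proof is correct and follows essentially the same approach as the paper: induction on the coefficient index, identifying the coefficient of $X^{k\ell}$ in $\sigma_k(\alpha_1,\dots,\alpha_n)$ with $\sigma_k(\alpha_1(\ell),\dots,\alpha_n(\ell))$ under the inductive hypothesis, and then invoking Lemma~\ref{symmrootszeros}. You are somewhat more explicit than the paper in justifying the coefficient identity and in tracking the strict inequality $k\ell < dk$, but the argument is the same.
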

\begin{proof}
Let $\alpha_i = \sum_{j=0}^\infty  \alpha_i(j) X^j$. We show that $\alpha_i(j) = 0$ if $j < d$.
Assume that we have $\alpha_i(j) = 0$ for $j<m< d$. We show then $\alpha_i(m) = 0$ for $i=1,\dots,n$.
The coefficient of $X^{im}$ in $\sigma_i(\alpha_1,...,\alpha_n)$ is $\sigma_i(\alpha_1(m),\dots,\alpha_n(m))$.
Since $\ord(\sigma_i(\alpha_1,...,\alpha_n)) > mi$ we get that $\sigma_i(\alpha_1(m),\dots,\alpha_n(m))=0$ and hence by Lemma 
\ref{symmrootszeros} we get that $\alpha_i(m) = 0$ for $i=1,\dots,n$.
\end{proof}

\begin{lem}
\label{splitlemma}
For a ring $R$ and a reduced extension $R\rightarrow A$, 
let $F =Y^n+\sum_{i=1}^n \alpha_i Y^{n-i}$ be an element of $R[[X]][Y]$ such that
$F(T^q,T^pZ) = T^{np} F_1(T,Z)$ with $F_1$ in $R[[T]][Z]$ for some $q>0,p$. If $F(U^m,Y)$ factors linearly over $A[[U]]$
for some $m>0$ then $F_1(0,Z)$ factors linearly over $A$.
\end{lem}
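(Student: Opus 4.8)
The plan is to deduce the linear factorization of $F_1(0,Z)$ over $A$ from the given linear factorization of $F(U^m,Y)$ over $A[[U]]$ by passing to a common ramification of the variables $U$ and $T$ and then evaluating at $X=0$; the single quantitative ingredient needed is the order estimate of Lemma \ref{orderlemma}. The case $p=0$ is immediate, since then $F_1(T,Z)=F(T^q,Z)$, so $F_1(0,Z)=F(0,Z)=\prod_{j=1}^n\bigl(Z-\eta_j(0)\bigr)$ after setting $U=0$ in the given factorization; so I assume $p>0$ from now on.

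First I would extract the order information. Comparing the coefficient of $Z^{n-i}$ on the two sides of $F(T^q,T^pZ)=T^{np}F_1(T,Z)$ shows that $T^{pi}$ divides $\alpha_i(T^q)$ in $R[[T]]$, equivalently $\ord~\alpha_i\ge pi/q$. On the other hand, expanding the product $F(U^m,Y)=\prod_{j=1}^n\bigl(Y-\eta_j\bigr)$ in $A[[U]][Y]$ gives $\alpha_i(U^m)=(-1)^i\sigma_i(\eta_1,\dots,\eta_n)$ for $0<i\le n$. Since substituting $X\mapsto U^m$ multiplies orders by $m$, the bound $\ord~\alpha_i\ge pi/q$ yields $\ord~\sigma_i(\eta_1,\dots,\eta_n)\ge (mp/q)\,i$ for every $i$. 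As $A$ is reduced, Lemma \ref{orderlemma} applied with $d=mp/q$ now gives $\ord~\eta_j\ge mp/q$ for each $j$.

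Next I would introduce a variable $W$ with $X=W^{mq}$, so that $T=W^m$ and $U=W^q$, and work in $A[[W]][Z]$. The bound $\ord~\eta_j\ge mp/q$ is exactly what is needed for $\zeta_j:=W^{-mp}\eta_j(W^q)$ to lie in $A[[W]]$. Substituting $T=W^m$ in the defining identity, and $U=W^q$, $Y=W^{mp}Z$ in the factorization, one obtains
\[
W^{mnp}F_1(W^m,Z)=\prod_{j=1}^n\bigl(W^{mp}Z-\eta_j(W^q)\bigr)=W^{mnp}\prod_{j=1}^n\bigl(Z-\zeta_j\bigr).
\]
Multiplication by $W$ is injective on $A[[W]]$ (no integrality of $A$ is needed for this), hence $W^{mnp}$ may be cancelled to give $F_1(W^m,Z)=\prod_{j=1}^n(Z-\zeta_j)$ in $A[[W]][Z]$. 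Applying the evaluation homomorphism $W\mapsto 0$ then produces $F_1(0,Z)=\prod_{j=1}^n\bigl(Z-\zeta_j(0)\bigr)$ with every $\zeta_j(0)\in A$, which is the asserted linear factorization over $A$.

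The delicate point is the middle step: arranging the order estimate on the elementary symmetric functions $\sigma_i(\eta_1,\dots,\eta_n)$ in precisely the shape $\ord\ge(mp/q)i$ demanded by Lemma \ref{orderlemma}, and thereby obtaining $\ord~\eta_j\ge mp/q$ — this is what makes the rescaled roots $\zeta_j$ genuine elements of $A[[W]]$, so that setting $W=0$ is legitimate. Everything after that is a routine formal substitution together with the cancellation of the non-zero-divisor $W^{mnp}$.
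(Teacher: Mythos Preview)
Your proof is correct and follows essentially the same route as the paper: introduce a common ramification variable (your $W$ is the paper's $V$), use Lemma~\ref{orderlemma} to bound the orders of the $\eta_j$ so that the rescaled roots $\zeta_j=W^{-mp}\eta_j(W^q)$ lie in $A[[W]]$, and then evaluate at $0$. The only cosmetic difference is that you apply Lemma~\ref{orderlemma} in $A[[U]]$ with $d=mp/q$ before substituting, whereas the paper substitutes first and applies it in $A[[V]]$ with $d=mp$; the separate treatment of $p=0$ is harmless but unnecessary, since your main argument goes through verbatim for $p\le 0$ as well.
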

\begin{proof}
We have $F(U^m,Y) = \prod_{i=1}^{n} (Y-\eta_i)$, $\eta_i \in A[[U]]$ \\ and hence we have
$F(V^{mq},V^{mp}Z) = \prod_{i=1}^{n} (V^{mp}Z-\eta_i(V^{q}))$, $\eta_i(U) \in A[[U]]$ and 
\begin{equation*} 
F_1(V^m,Z) = \prod_{i=1}^{n} (Z-V^{-mp}\eta_i(V^q)) = Z^n + \sum_{i=1}^n V^{-imp} \beta_i(V^q) Z^{n-i}
\end{equation*} 
Since $F_1(T,Z)$ is in $R[[T]][Z]$ we have $imp\leqslant \ord~ \beta_i(V^q)$. \\
Since $\beta_i(V^q) = \sigma_i(\eta_1(V^q),\dots,\eta_n(V^q))$, 
Lemma \ref{orderlemma} shows that $mp \leqslant \ord~\eta_i(V^q)$ for $0<i\leqslant n$. 
Hence $\mu_i(V) = V^{-mp}\eta_i(V^q)$ is
in $A[[V]]$ and since $F_1(V,Z) = \prod_{i=1}^{n} (Z-\mu_i(V))$, we have that $F_1(0,Z)$ 
factors linearly over $A$, of roots $\mu_i(0)$.
\end{proof}

\begin{defn}
\label{splitdef}
Let $R=K[a_1,...,a_n],p_1,...,p_n$ be a triangular separable algebra with $p_i$ of degree $m_i$ and $A$ an algebra over $K$. Then $A$ splits $R$ if there exist a family of elements $\{a_{i_1,...,i_l} \in A \mid 0 < l \leq n, 0 < i_j \leq m_j\}$ such that \begin{align*}
& p_1=\prod_{d=0}^{m_1} (X - a_d)\\
& p_{l+1}(a_{i_1}, a_{i_1,i_2},...,a_{i_1,...,i_l},X) = \prod_{d=0}^{m_{l+1}} (X-a_{i_1,...,i_l,d})
\end{align*}
for $0 < l < n$
\end{defn}

We can view the previous definition as that of a tree of homomorphisms from the subalgebras of $R$ to $A$. At the root we have the identity homomorphism from $K$ to $A$ under which $p_1$ factors linearly, i.e. $p_1 = \prod_{j=0}^{m_1}(X-\bar{a}_{1j})$. From this we obtain $m_1$ homomorphisms $\varphi_1,...,\varphi_{m_1}$ from $K[a_1]$ to $A$ each taking $a_1$ to a different $\bar{a}_{1j}$. If $p_2$ factors linearly under say $\varphi_1$, i.e. $\varphi_1(p_2) = \prod_{j=0}^{m_2} (X-\bar{a}_{2j})$ then we obtain $m_2$ different (since $p_2$ is separable) homomorphisms $\varphi_{11}, ..., \varphi_{1m_2}$ from $K[a_1,a_2]$ to $A$. Similarly we obtain $m_2$ different homomorphisms from $K[a_1,a_2]$ to $A$ by extending $\varphi_2,\varphi_3,...etc$, thus having $m_1m_2$ homomorphism in total. Continuing in this fashion we obtain the $m$ different homomorphisms of the family $\mathcal{S}$.

We note that if an algebra $A$ over $K$ splits a triangular separable algebra $R$ over $K$ then $A\otimes_K R \cong A^{[R:K]}$. If $A$ is a field then the converse is also true as the following lemma shows.

\begin{lem}\label{splitfield}
Let $L/K$ be a field and $R=K[a_1,...,a_n],p_1,...,p_n$ a triangular separable algebra. Then $L \otimes_K R \cong L^{[R:K]}$ only if $L$ splits $R$.
\end{lem}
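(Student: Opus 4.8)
The plan is to argue by induction on the length $n$ of the triangular tower $K=R_0\subseteq R_1\subseteq\cdots\subseteq R_n=R$ (with $R_l=K[a_1,\dots,a_l]$ and $R_l=R_{l-1}[X]/\ideal{p_l}$), keeping the base field arbitrary so the inductive hypothesis can be reapplied over a larger field. The guiding observation is that ``$L$ splits $R$'' is exactly a statement about the tree of all $K$--algebra homomorphisms $R_l\to L$: it says that $p_1$ splits into linear factors over $L$ and that for every homomorphism $\varphi\colon R_l\to L$ the image $\varphi(p_{l+1})\in L[X]$ again splits into linear factors over $L$; while the hypothesis $L\otimes_K R\cong L^{[R:K]}$ says exactly that there are $[R:K]=\deg p_1\cdots\deg p_n$ such homomorphisms in total, the largest number possible. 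The idempotent bookkeeping below is what localises this to each branch.

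The base case $R=K$ is vacuous. For the inductive step, first transport the class of $X$ through the inclusion $L[X]/\ideal{p_1}=L\otimes_K R_1\hookrightarrow L\otimes_K R\cong L^{[R:K]}$, which is injective because $L$ is $K$--flat. This exhibits roots of $p_1$ in $L$, and injectivity forces the kernel of the induced map $L[X]\to L^{[R:K]}$ --- which is generated by the product of the factors $(X-\beta)$ over the distinct roots $\beta$ that occur --- to coincide with $\ideal{p_1}$; comparing degrees we get $p_1=\prod_{d=1}^{m_1}(X-a_d)$ with the $a_d\in L$ pairwise distinct. Hence $L\otimes_K R_1=L[X]/\ideal{p_1}\cong L^{m_1}$, the $d$--th projection extending the $K$--homomorphism $\tau_d\colon R_1\to L$ sending $a_1\mapsto a_d$.

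Now $L\otimes_K R\cong(L\otimes_K R_1)\otimes_{R_1}R\cong\prod_{d=1}^{m_1}B_d$, where $B_d:=L\otimes_{R_1,\tau_d}R$ is the base change of $R$ along $\tau_d$, again a triangular separable algebra --- now over $L$ --- with defining polynomials $\tau_d(p_2),\dots,\tau_d(p_n)$ and with $\dim_L B_d=[R:K]/m_1$. To push the isomorphism $\prod_d B_d\cong L^{[R:K]}$ down to each factor, use that the idempotents of $L^N$ are precisely the $\{0,1\}$--vectors: the idempotent of $\prod_d B_d$ that is the identity of $B_d$ and zero elsewhere is sent to the indicator of a detachable subset $S_d$ of the index set; these $S_d$ are pairwise disjoint and cover the index set; and $B_d$ is carried isomorphically onto $\prod_{i\in S_d}L$, a copy of $L^{\dim_L B_d}=L^{[B_d:L]}$. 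By the inductive hypothesis (applied with base field $L$ and the trivial extension $L/L$), $L$ splits each $B_d$. Finally, the factorization $p_1=\prod_d(X-a_d)$ together with the splitting families of the $B_d$ assemble into the family $\{a_{i_1,\dots,i_l}\}$ demanded by Definition \ref{splitdef}: the root $a_{i_1}$ of $p_1$ selects the base change $\tau_{i_1}$ and the algebra $B_{i_1}$, and the defining equation $p_{l+1}(a_{i_1},a_{i_1i_2},\dots,a_{i_1\dots i_l},X)=\prod_d(X-a_{i_1\dots i_l d})$ is obtained by applying $\tau_{i_1}$ to $p_{l+1}\in R_l[X]$ and then the splitting homomorphism furnished for $B_{i_1}$.

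The two ingredients doing the work --- exactness of extension of scalars along a field extension, and the fact that idempotents of $L^N$ are $\{0,1\}$--vectors with detachable support --- are elementary and constructive, using only that $L$ is discrete. The one delicate point is the bookkeeping in the last step: one must keep straight that choosing a root $a_{i_1}$ of $p_1$ is the same as choosing a factor $B_{i_1}$, and check that each $\tau_{i_1}(p_i)$ is still monic and separable, which is immediate since a ring homomorphism carries the B\'ezout identity $r p_i+s p_i'=1$ to the corresponding identity for $\tau_{i_1}(p_i)$.
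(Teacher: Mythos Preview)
Your argument is correct, but it proceeds along a genuinely different line from the paper. The paper gives a direct counting argument without induction: from $L\otimes_K R\cong L^m$ one obtains $m=[R:K]$ distinct $K$--algebra homomorphisms $R\to L$, hence $m$ distinct tuples $(a_{1j},\dots,a_{nj})$ of images of the generators; since $p_1$ is separable of degree $m_1$ there are at most $m_1$ values for $a_{1j}$, and for each such value at most $m_2$ values for $a_{2j}$, etc., so the inequality $m\le m_1\cdots m_n=m$ forces equality at every stage, which is exactly the statement that each $p_l$ factors linearly over $L$ along every branch. Your approach instead peels off $p_1$ structurally: flatness of $L$ over $K$ gives the injection $L[X]/\ideal{p_1}\hookrightarrow L^{[R:K]}$, whence $p_1$ splits; the Chinese remainder decomposition $L\otimes_K R\cong\prod_d B_d$ together with the classification of idempotents in $L^N$ then reduces to the same hypothesis for each shorter tower $B_d$ over $L$, and you finish by induction on $n$. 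The paper's proof is shorter and more elementary (no flatness, no base change), while yours makes the recursive structure of Definition~\ref{splitdef} explicit and would adapt more readily if one wanted to vary the base ring. Both are constructive given that $L$ is discrete.
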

\begin{proof}
Let $\deg(p_i) = m_i$, $[R:K] = m = \prod_{i=1}^n m_i$ and let $L \otimes_K R\cong L^{[R:K]}$. Then there exist a system of orthogonal idempotents\footnote{That is $e_i e_j = 0$ if $i \neq j$ and $e_1 + ... + e_m = 1$.} $e_1,...,e_m$ such that $A=L \otimes_K R\cong A/(1-e_1) \times .... \times A/(1-e_m) = L^m$. Let $a_{ij}$ be the image of $a_i$ in $A/(1-e_j)$. Then we have $(a_{11},...,a_{n1}) \neq (a_{12},...,a_{n2}) \neq ... \neq (a_{1m},...,a_{nm})$ since otherwise we will have the ideals $\ideal{1-e_i}=\ideal{1-e_j}$ for some $i \neq j$. Since $p_1$ is separable there are up to $m_1$ different images $a_{1j}$ of $a_1$. Thus the size of the set $\{a_{1j} \mid 0 < j \leq m\}$ is equal to $m_1$ only if $p_1$ factors linearly over $L$. Similarly, for each different image $\bar{a}_1$ of $a_1$ there are up to $m_2$ possible images of $a_2$ in $L$ since the polynomial $p_2(\bar{a}_1,X)$ is separable. Thus the size of the set $\{(a_{1j},a_{2j}) \mid 0 < j \leq m\}$ is equal $m_1m_2$ only if $p_1$ factors linearly over $L$ and for each root $\bar{a}_1$ of $p_1$ the polynomial $p_2(\bar{a}_1,X)$ factors linearly over $L$. Continuing in this fashion we find that the size of the set $\{(a_{1j},...,a_{nj}) \mid 0 < j \leq m\}$ is equal to $m_1...m_n = m$ only if $L$ splits $R$.
\end{proof}

\begin{lem}\label{decompsplitdiv}
Let $A$ be a triangular separable algebra over a field $K$ and let $p$ be a monic non-constant polynomial of degree $m$ in $A[X]$ such that $p = \prod_{i=1}^m (X-a_i)$ with $a_i \in A$. If $g$ is a monic non-constant polynomial of degree $n$ such that $g \mid p$ then we have a decomposition $A \cong R_1 \times ... \times R_l$ such that for any $R_j$ in the product $g = \prod_{i=1}^{n} (X-\bar{a}_i)$ with $\bar{a}_i \in R_j$ the image in $R_j$ of some $a_k, 0 < k \leq m$.
\end{lem}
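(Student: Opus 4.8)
The plan is to argue by induction on the degree $m$ of $p$. The base case $m=1$ is immediate: a monic non-constant $g$ dividing $p$ must equal $p = X-a_1$, so one takes $l=1$, $R_1=A$. We may also dispose of the case $\deg g = m$, in which $g=p$ (both monic, $g\mid p$) and there is nothing to prove; so in the inductive step assume $n=\deg g < m$. Euclidean division in $A[X]$ then gives $p = g\,h$ with $h$ monic of degree $m-n\geq 1$. The idea is now to ``decide'' whether $a_m$ is a root of $g$: since a triangular separable algebra is regular (Lemma~\ref{extension} and the discussion following it), the principal ideal $\ideal{g(a_m)}$ is generated by an idempotent, so $A\cong B_0\times B_1$ with $g(a_m)=0$ in $B_0$ and $g(a_m)$ a unit in $B_1$, and — as in Lemmas~\ref{sum1decomp} and \ref{apartcontinvert} — the factors $B_0,B_1$ are again triangular separable $K$--algebras. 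It suffices to treat $B_0$ and $B_1$ separately and concatenate the resulting decompositions; write $\bar a_i$ for the images of the $a_i$.

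Over $B_0$: the division of $g$ by the monic $X-\bar a_m$ reads $g = g(\bar a_m) + (X-\bar a_m)\,g_1 = (X-\bar a_m)\,g_1$ with $g_1$ monic of degree $n-1$. If $n=1$ this already exhibits $g$ as linear with root $\bar a_m$, the image of $a_m$. If $n\geq 2$, then from $(X-\bar a_m)\prod_{i=1}^{m-1}(X-\bar a_i) = p = (X-\bar a_m)\,g_1 h$ and the fact that a monic polynomial is a non-zero-divisor in $B_0[X]$, I cancel the factor $X-\bar a_m$ to get $g_1 h = \prod_{i=1}^{m-1}(X-\bar a_i)$. Thus $g_1$ is a monic non-constant divisor of a product of $m-1$ linear factors over $B_0$, and the induction hypothesis supplies a decomposition of $B_0$ over which $g_1$, hence $g=(X-\bar a_m)g_1$, splits into linear factors whose roots are images of the $a_k$.

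Over $B_1$: since $g(\bar a_m)$ is a unit, the identity $g = g(\bar a_m) + (X-\bar a_m)\,q$ gives a Bézout relation $u\,g + v\,(X-\bar a_m) = 1$ in $B_1[X]$. Multiplying $(X-\bar a_m)\prod_{i=1}^{m-1}(X-\bar a_i) = g\,h$ appropriately yields $\prod_{i=1}^{m-1}(X-\bar a_i) = g\bigl(u\prod_{i=1}^{m-1}(X-\bar a_i) + v\,h\bigr)$, so $g$ divides the product of the $m-1$ linear factors $X-\bar a_i$ ($i<m$), and $\deg g = n \leq m-1$. The induction hypothesis applies once more. Merging the decompositions obtained for $B_0$ and $B_1$ produces the decomposition of $A$ asserted by the lemma, with every root of $g$ over each piece being the image of some $a_k$.

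The substantive points — i.e.\ where care is needed — are two. First, that splitting $A$ along the idempotent generated by $g(a_m)$ yields \emph{triangular separable} algebras rather than merely regular rings; this is exactly the decomposition mechanism already used throughout the paper, so it is available. Second, the repeated cancellation of the monic factor $X-\bar a_m$, which is legitimate precisely because a monic polynomial is a non-zero-divisor in a polynomial ring over any commutative ring. Everything else — tracking multiplicities in the products $\prod(X-\bar a_i)$, the degenerate subcases where a cofactor collapses to a constant, and verifying that each root produced along the induction is indeed the image of some $a_k$ — is routine bookkeeping carried through the induction.
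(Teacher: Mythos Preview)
Your argument is correct and follows essentially the same route as the paper: induct on $m=\deg p$, evaluate at one of the roots (the paper picks $a_1$, you pick $a_m$), split $A$ into pieces where that value is zero or a unit, and on each piece reduce to a product of $m-1$ linear factors. The only cosmetic difference is your handling of the ``unit'' branch: the paper simply observes that $g(a_1)q(a_1)=p(a_1)=0$ with $g(a_1)$ a unit forces $q(a_1)=0$, then factors $X-a_1$ out of $q$ and cancels --- you instead pass through a B\'ezout relation, which amounts to the same thing. Your caveat about the pieces being triangular separable (not merely regular) is well placed; the paper uses this implicitly throughout and does not justify it any more than you do.
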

\begin{proof}
Let $p = (X-a_1)...(X-a_n)$ for $a_1,...,a_n \in A$. Let $p = g q$. Then $p(a_1) = g(a_1) q(a_1) = 0$. We can find a decomposition of $A$ into triangular separable algebras $A_1 \times ... \times A_t \times B_1 \times B_s$ such that $g(a_1) = 0$ in $A_i, 0 < i \leq t$ and $g(a_1)$ is a unit in $B_i, 0 < i \leq s$ in which case $q(a_1) = 0$ in $B_i$. By induction we can find a decomposition of $A$ into a product of triangular separable algebras $R_1,\dots,R_l$ such that $g$ factors linearly over $R_i$. 
\end{proof}

From Definition \ref{splitdef} it is obvious that if an algebra $A$ splits a triangular separable algebra $R$ then $A/I$ splits $R$ for any ideal $I$ of $A$. 

\begin{lem}\label{splitpersist}
Let $A$ and $R$ be triangular separable algebras over $K$ such that $A$ splits $R$. Let $B$ be a refinement of $R$. Then we can find a decomposition $A \cong A_1 \times ... \times A_m$ into a product of triangular separable algebras such that $A_i$ splits $B$ for $0<i\leq m$.
\end{lem}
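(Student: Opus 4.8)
The plan is to reduce to the case where $B$ differs from $R$ by refining a single polynomial, and then iterate. Write $R = K[a_1,\dots,a_n]$ with defining polynomials $p_1,\dots,p_n$, where $p_i$ has degree $m_i$, and write $B = K[b_1,\dots,b_n]$ with defining polynomials $q_1,\dots,q_n$, where $q_i \mid p_i$ for each $i$ (this is the form of a refinement, as noted after Corollary \ref{squarefreemonic}). By an induction on the number of indices $i$ for which $q_i$ is a proper divisor of $p_i$, it suffices to treat the case where $q_j = p_j$ for all $j \neq i_0$ and $q_{i_0}$ is a proper monic divisor of $p_{i_0}$ for a single index $i_0$.

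So assume $B$ is obtained from $R$ by replacing $p_{i_0}$ with a monic divisor $q_{i_0}$. Since $A$ splits $R$, Definition \ref{splitdef} gives a tree of elements of $A$; in particular, for each branch corresponding to a choice of roots $\bar a_1,\dots,\bar a_{i_0-1}$ of $p_1,\dots,p_{i_0-1}$, the polynomial $p_{i_0}(\bar a_1,\dots,\bar a_{i_0-1},X)$ factors linearly over $A$ as $\prod_{d}(X - \bar a_{\dots,d})$. Because $q_{i_0} \mid p_{i_0}$, the specialized polynomial $q_{i_0}(\bar a_1,\dots,\bar a_{i_0-1},X)$ divides $p_{i_0}(\bar a_1,\dots,\bar a_{i_0-1},X)$ in $A[X]$. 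Now apply Lemma \ref{decompsplitdiv} to this divisibility: we obtain a decomposition of $A$ into triangular separable algebras over each factor of which $q_{i_0}(\bar a_1,\dots,\bar a_{i_0-1},X)$ factors linearly, with roots among the images of the original roots. Doing this for every branch of the tree and taking a common refinement of the finitely many resulting decompositions of $A$, we obtain $A \cong A_1 \times \dots \times A_m$ such that on each $A_k$ all the specialized polynomials $q_{i_0}(\dots,X)$ split linearly, while the data witnessing that the other polynomials $p_j = q_j$ split is inherited (using that splitting persists under quotients, as remarked after Lemma \ref{decompsplitdiv}). This exactly says each $A_k$ splits $B$.

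The main obstacle is bookkeeping: one must check that the root elements produced by Lemma \ref{decompsplitdiv} on each branch can be organized into the tree structure required by Definition \ref{splitdef} for $B$, i.e. that refining $p_{i_0}$ does not disturb the coherence of the choices made at later levels $i_0+1,\dots,n$. This is handled by noting that a root of $q_{i_0}$ over $A_k$ is in particular a root of $p_{i_0}$, so the specialization homomorphism $K[b_1,\dots,b_{i_0}] \to A_k$ factors through one of the homomorphisms $K[a_1,\dots,a_{i_0}] \to A$ already present in the splitting tree for $R$; the remaining levels are then obtained by restricting the existing tree and pulling back through the finite decomposition, again invoking that splitting is preserved under passing to quotients and products.
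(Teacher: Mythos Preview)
Your proof is correct and follows essentially the same strategy as the paper's: both arguments repeatedly invoke Lemma~\ref{decompsplitdiv} to factor the refined polynomial at each branch of the splitting tree, use that the resulting roots are images of the original roots of $p_{i_0}$ so the later levels of the tree are inherited, and rely on the remark that splitting passes to quotients. The only organizational difference is that you first reduce to the case of a single refined index $i_0$ and then induct on the number of such indices, whereas the paper processes the levels $1,2,\dots,n$ directly in order without isolating a single index; this is a cosmetic repackaging of the same induction and bookkeeping.
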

\begin{proof}
Let $R = K[a_1,...,a_n],p_1,...,p_n$. Then $B = K[\bar{a}_1,...,\bar{a}_n], g_1,...,g_n$ where $g_j \mid p_j$ for $0 < j \leq n$. Let $\deg(p_j) = m_j$ and $\deg(g_j) = \ell_j$ for $0< j\leq n$. Since $A$ splits $R$ we have a family of elements $\{a_{i_1,...,i_l} \in A \mid 0 < l \leq n, 0 < i_j \leq m_j\}$ satisfying the condition of Definition \ref{splitdef}. we have $p_1=\prod_{i=1}^{m_1} (X-a_{i_1})$. By Lemma \ref{decompsplitdiv} we decompose $A$ into the product $A_1 \times...\times A_t$ such that for any given $A_k$ in the product we have $p = \prod_{i=1}^{m_1} (X-\bar{a}_{i_1})$ and $g = \prod_{i=1}^{\ell_1} (X-\bar{a}_{i_1})$ with $\bar{a}_{i_1} \in A_k$ for $0 < i \leq m_1$. Since each $\bar{a}_{i_1}$ is an image of some $a_{j_1}$ and $p_2(a_{j_1},X)$ factors linearly over $A$ we have that $p_2(\bar{a}_{i_1},X)$ factors linearly over $A_k$ but then $g_2(\bar{a}_{i_1},X)$ divides $p_2(\bar{a}_{i_1},X)$ and thus by Lemma \ref{decompsplitdiv} we can decompose $A_k$ into the product $B_1 \times ... \times B_s$ such that for a given $B_r$ in the product we have $p_2(\bar{a}_{i_1},X) = \prod_{j=1}^{m_2} (X - \bar{a}_{i_1,j_2})$ and $g_2(\bar{a}_{i_1},X) =  \prod_{j=1}^{\ell_2} (X - \bar{a}_{i_1,j_2})$. By induction on the $m_1$ values of $\bar{a}_{i_1}$ we can find a decomposition $D_1 \times ... \times D_l$ such that in each $D_i$ we have $g_1(X) = \prod_{i=1}^{\ell_1}(X-\bar{a}_{i_1})$ and  $g_2(\bar{a}_{i_1},X) = \prod_{j=1}^{\ell_2} (X-\bar{a}_{i_1,j_2})$ for $0 < i \leq \ell_1$. Continuing in this fashion we can find a decomposition of $A$ such that each algebra in the decomposition splits $B$.
\end{proof}

\begin{lem}\label{decompsplitsplit} Let $A$ and $B$ be triangular separable algebras such that $A\cong A_1 \times ... \times A_t$ and each $A_i$ splits $B$. Then $A$ splits $B$.
\end{lem}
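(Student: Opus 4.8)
The plan is to build the splitting data for $A$ by gluing together, coordinate by coordinate, the splitting data already available for the factors $A_1,\dots,A_t$. Write $B = K[a_1,\dots,a_n]$ with defining polynomials $p_1,\dots,p_n$ and $\deg p_i = m_i$. For each $k$ with $1\le k\le t$, the hypothesis that $A_k$ splits $B$ furnishes a family $\{\,a^{(k)}_{i_1,\dots,i_l}\in A_k\mid 0<l\le n,\ 0<i_j\le m_j\,\}$ satisfying the factorization identities of Definition \ref{splitdef} inside $A_k[X]$. Using the isomorphism $A\cong A_1\times\dots\times A_t$, I would define, for every admissible index tuple, $a_{i_1,\dots,i_l} := (a^{(1)}_{i_1,\dots,i_l},\dots,a^{(t)}_{i_1,\dots,i_l})\in A$, and then check that this family witnesses that $A$ splits $B$.

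For the verification, the observation to exploit is that since $t$ is finite there is a ring isomorphism $A[X]\cong A_1[X]\times\dots\times A_t[X]$, and each projection $\pi_k\colon A\to A_k$ is a $K$-algebra homomorphism which extends coefficientwise to $A[X]\to A_k[X]$; hence a polynomial equation holds in $A[X]$ precisely when its $\pi_k$-image holds in every $A_k[X]$. The identity $p_1=\prod_d(X-a_d)$ then reduces, under $\pi_k$, to $p_1=\prod_d(X-a^{(k)}_d)$, which is true by hypothesis. For the remaining identities I would first note that along each branch the chosen elements satisfy $p_1(a_{i_1})=0$, $p_2(a_{i_1},a_{i_1,i_2})=0,\dots$ — each being literally one of the displayed roots — so that they induce a well-defined $K$-algebra homomorphism $\varphi_{i_1,\dots,i_l}\colon K[a_1,\dots,a_l]\to A$ with $a_j\mapsto a_{i_1,\dots,i_j}$, and by construction $\pi_k\circ\varphi_{i_1,\dots,i_l}$ is the analogous homomorphism $\varphi^{(k)}_{i_1,\dots,i_l}$ into $A_k$. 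Applying $\pi_k$ to $\varphi_{i_1,\dots,i_l}(p_{l+1})$ then gives $\varphi^{(k)}_{i_1,\dots,i_l}(p_{l+1})=\prod_d(X-a^{(k)}_{i_1,\dots,i_l,d})$, i.e. the $\pi_k$-image of $\prod_d(X-a_{i_1,\dots,i_l,d})$; as this holds for all $k$, the required identity holds in $A[X]$, and Definition \ref{splitdef} is satisfied for the combined family.

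I do not anticipate a genuine obstacle: the entire content is this coordinatewise bookkeeping together with the elementary fact that a polynomial identity over a finite product of rings may be checked factor by factor. The only point demanding a little attention is the well-definedness of the evaluations $p_{l+1}(a_{i_1},\dots,a_{i_1,\dots,i_l},X)$, and this is forced by the lower factorization identities themselves (the selected elements are roots of the relevant $p_j$), exactly as in the tree-of-homomorphisms description given after Definition \ref{splitdef}. Once that is in place, everything else is immediate.
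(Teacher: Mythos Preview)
Your proposal is correct and follows essentially the same route as the paper: define $a_{i_1,\dots,i_l}=(a^{(1)}_{i_1,\dots,i_l},\dots,a^{(t)}_{i_1,\dots,i_l})$ and verify the identities of Definition~\ref{splitdef} componentwise via the projections $A\to A_k$. Your extra remark on the well-definedness of the evaluations $p_{l+1}(a_{i_1},\dots,a_{i_1,\dots,i_l},X)$ through the induced homomorphisms $K[a_1,\dots,a_l]\to A$ is a welcome clarification that the paper leaves implicit.
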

\begin{proof}
Let $B = K[a_1,...,a_n],g_1,...,g_n$ with $\deg(g_i) = m_i$. Then we have a family of elements $\{a^{(i)}_{k_1,...,k_l} \mid 0 < k_j \leq m_j, 0 < j \leq n \}$ in $A_i$ satisfying the conditions of Definition \ref{splitdef}. We claim that the family \begin{equation*}\mathcal{S}=\{a_{k_1,...,k_l} \mid a_{k_1,...,k_l} = (a^{(1)}_{k_1,...,k_l},...,a^{(t)}_{k_1,...,k_l}), 0 < k_j \leq m_j, 0 < j \leq n\}\end{equation*} of $A$ elements satisfy the conditions of Definition \ref{splitdef}. Since we have a factorization $g_1 = \prod_{l=1}^{m_1} (X-a^{(i)}_l)$ over $A_i$, we have a factorization $g_1 = \prod_{l=1}^{m_1} (X-(a^{(1)}_l,...,a^{(t)}_l) =\prod_{l=1}^{m_1} (X-a_l)$ over $A$. Since for $0 < l \leq m_1$ we have a factorization $g_2(a^{(i)}_l,X) = \prod_{j=1}^{m_2} (X-a^{(i)}_{l,j})$ of in $A_i$, we have a factorization $g_2(a_l,X) = \prod_{j=1}^{m_2} (X-(a^{(1)}_{l,j},...,a^{(t)}_{l,j}) = \prod_{j=1}^{m_2} (X-a_{l,j})$. Continuing in this fashion we verify that the family $\mathcal{S}$ satisfy the requirements of Definition \ref{splitdef}.
\end{proof}
\begin{cor}\label{splitalgsplitrefinement}
Let $A$ and $B$ be triangular separable algebras such that $A$ splits $B$. Then $A$ splits any refinement of $B$.
\end{cor}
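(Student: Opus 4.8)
The plan is to obtain this corollary directly from the two immediately preceding lemmas, with essentially no new work. Let $C$ be an arbitrary refinement of $B$; recall that, by the definition of refinement, $C$ is again a triangular separable algebra over $K$ (it is presented as $K[b_1,\dots,b_n], q_1,\dots,q_n$ with $q_i \mid p_i$), so it is a legitimate argument for both lemmas invoked below. Our goal is to show that $A$ splits $C$.

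First I would invoke Lemma~\ref{splitpersist}, taking the ``$R$'' of that lemma's statement to be $B$ and the ``$B$'' of that lemma's statement to be $C$. Its hypotheses are satisfied: $A$ and $B$ are triangular separable algebras over $K$, $A$ splits $B$ by assumption, and $C$ is a refinement of $B$. The conclusion then provides a decomposition $A \cong A_1 \times \dots \times A_m$ into a product of triangular separable algebras such that each $A_i$ splits $C$.

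Then I would apply Lemma~\ref{decompsplitsplit} to exactly this decomposition: from $A \cong A_1 \times \dots \times A_m$ together with the fact that each $A_i$ splits $C$, that lemma concludes that $A$ splits $C$. Since $C$ was an arbitrary refinement of $B$, this establishes the corollary.

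The only point requiring any care is the bookkeeping of variable names between the statements of Lemma~\ref{splitpersist} and Lemma~\ref{decompsplitsplit} (in particular ensuring that a refinement of $B$ is genuinely a triangular separable algebra, so that both lemmas apply verbatim); there is no real obstacle, as the corollary is just the composition of these two results.
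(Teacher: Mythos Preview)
Your proof is correct and is exactly the intended argument: the paper states this corollary without proof because it follows immediately by composing Lemma~\ref{splitpersist} with Lemma~\ref{decompsplitsplit}, just as you wrote.
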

Lemmas \ref{splitlemma}, \ref{decompsplitsplit} and Corollary \ref{splitalgsplitrefinement} allow us to extend Lemma \ref{keylemmadyn} as follows.

\begin{lem}\label{keylemmadynext}
Let $R=K[a_1,...,a_n],p_1,...,p_n$ be a triangular separable algebra with $\deg(p_i) = m_i$. Let
$F(a_1,...,a_n, X,Y) = Y^n + \sum_{i=1}^n \alpha_i(X) Y^{n-i}\in R[[X]][Y]$ be a
monic non-constant polynomial of degree $n \geq 2$ such that $P F + Q F_Y = \gamma$ for some $P,Q \in R[[X]][Y]$, $\gamma \in R[[X]]$ with $\gamma \h 0$. 
There exists then a decomposition $R_1,\dots$ of $R$ and for each $i$
there exist $m>0$ and a proper factorization $F(T^m,Y)= G(T,Y)H(T,Y)$ with $G$ and $H$ in $S_i[[T]][Y]$
where $S_i = R_i[b],q$ is a separable extension of $R_i$. \\ Moreover, Let $A$ be a triangular separable algebra such that $A$ splits $R$ and let $\{a_{i_1,...,i_l}\mid 0 < l \leq n, 0 < i \leq m_i\}$ be the family of elements in $A$ satisfying the conditions in Definition $\ref{splitdef}$. If $F(a_{i_1},...,a_{i_1,...,i_n}, X,Y)$ factors linearly over $A[[U]]$ for $0 < i \leq m_i$ where $U^v = X$ for some positive integer $v$ then $A$ splits $S_i$.
\end{lem}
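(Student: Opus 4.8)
The plan is to deduce the first assertion from Lemma~\ref{keylemmadyn} and to spend the real work on the ``Moreover'' clause. For the first assertion I would run the proof of Lemma~\ref{keylemmadyn} essentially verbatim, the only point needing attention being that $\gamma\h0$ now lives in $R[[X]]$ rather than in $K[[X]]$; this is handled by the same idempotent‑splitting device as in Lemma~\ref{apartcontinvert}, using that a nonzero ideal of the regular ring $R$ is generated by a nonzero idempotent, so one still obtains a decomposition of $R$ on which some $\alpha_k(m)$, $k=n$ or $k=n-1$, is a unit. I would record the objects produced on a given component $R_i$: after the substitution $F(T^m,T^pZ)=T^{np}F_1(T,Z)$ with $F_1(T,Z)=Z^n+c_2(T)Z^{n-2}+\dots+c_n(T)$ and $c_m(0)$ a unit, the extension $S_i=R_i[b],q$ is obtained (through Corollary~\ref{squarefreemonic}) by adjoining a separable root $b$ of the radical of $\psi(Z):=F_1(0,Z)$, so that $q$ is monic and separable, $q\mid\psi$ in $R_i[Z]$, and $\psi=(Z-b)^sL$ with $L(b)$ a unit. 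Fix such an $R_i$ for the rest.

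For the ``Moreover'' clause, by Lemma~\ref{decompsplitsplit} it suffices to exhibit a decomposition of $A$ every factor of which splits $S_i$. Since $R_i$ is a refinement of $R$ and $A$ splits $R$, I would first run the argument of Lemma~\ref{splitpersist} to decompose $A$ as $A_1\times\dots\times A_m$ so that each $A_k$ splits $R_i$ via a family whose members are the images in $A_k$ of members of the given splitting family of $R$ (that is exactly how Lemma~\ref{decompsplitdiv} enters the proof of \ref{splitpersist}). The payoff of this step is that every leaf homomorphism $\varphi\colon R_i\to A_k$ of that family is the reduction to $A_k$ of (the composite with $R\to R_i$ of) a leaf homomorphism $\widehat\varphi\colon R\to A$ of the given family of $R$; hence the hypothesis applies to $\widehat\varphi$, and the linear factorization of $\widehat\varphi(F)(U^v,Y)$ over $A[[U]]$ reduces to a linear factorization of $\varphi(F)(U^v,Y)$ over $A_k[[U]]$. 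It then remains, for each such $\varphi$, to split $\varphi(q)(Z)\in A_k[Z]$ into linear factors over a decomposition of $A_k$ with all roots images of $b$: by Definition~\ref{splitdef}, adjoining these roots to the family shows that the corresponding pieces split $S_i$.

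To produce those factorizations I would apply $\varphi$ to the identity $F(T^m,T^pZ)=T^{np}F_1(T,Z)$ over $R_i$, obtaining $\varphi(F)(T^m,T^pZ)=T^{np}\varphi(F_1)(T,Z)$ over $A_k$; then Lemma~\ref{splitlemma}, applied to the reduced extension $\varphi\colon R_i\to A_k$ together with the linear factorization of $\varphi(F)(U^v,Y)$ over $A_k[[U]]$, gives that $\varphi(F_1)(0,Z)=\varphi(\psi)(Z)$ factors linearly over $A_k$. Since $\varphi(q)$ is monic and divides $\varphi(\psi)$ in $A_k[Z]$, Lemma~\ref{decompsplitdiv} then yields a decomposition of $A_k$ on each factor of which $\varphi(q)$ is a product of linear factors with roots among the images of the roots of $\varphi(\psi)$, hence images of $b$. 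Doing this for the finitely many $\varphi$ and finitely many $A_k$, and using that a quotient of a factor splitting $R_i$ still splits $R_i$, produces a decomposition of $A$ every factor of which splits $S_i$, and Lemma~\ref{decompsplitsplit} finishes. I expect the main obstacle to be the bookkeeping that keeps the decomposition of $R$ aligned with that of $A$: it is only because the splitting family of $R_i$ over each $A_k$ can be taken inside the given family of $R$ that the hypothesis on the conjugates of $F$ feeds into Lemma~\ref{splitlemma}, which carries the only genuinely new content; the rest is routine manipulation of decompositions.
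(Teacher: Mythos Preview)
Your proposal is correct and follows essentially the same route as the paper: decompose $A$ via Lemma~\ref{splitpersist} so that each piece splits the relevant refinement of $R$, push the linear factorizations of the conjugates of $F$ down to each piece, invoke Lemma~\ref{splitlemma} to get $F_1(0,Z)$ split linearly there, and reassemble with Lemma~\ref{decompsplitsplit}. Your explicit appeal to Lemma~\ref{decompsplitdiv} to pass from the linear factorization of $\psi=F_1(0,Z)$ to that of its separable divisor $q$ makes rigorous a step the paper leaves tacit.
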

\begin{proof}
The proof proceeds as the proof of Lemma \ref{keylemma}, assuming w.l.o.g. $\alpha_1 = 0$.
We first find a decomposition $R_1,\dots$ of $R$ and for each $l$
we can then find $m$ and $p$ such that $\alpha_m(p)$ is invertible and
$\alpha_i(j) = 0$ whenever $j/i  < p/m$ in $R_l$. We can then write
\begin{equation*}
F(T^m,T^{p}Z)= T^{np}(Z^n + c_2(T) Z^{n-2} + \dots + c_n(T))
\end{equation*}
with $\ord~c_m = 0$.
Since $A$ splits $R$ then by Lemma\ref{splitpersist} we can find a decomposition $A_1,\dots$ of $A$ such that each $A_i$ splits $R_l$ for each $l$.
We then find a further decomposition $R_{l1},R_{l2},\dots$ of $R_l$
and for each $t$ a number $s$ and a separable extension $R_{lt}[a]$ of $R_{lt}$ such that
$$q=Z^n + c_2(0)Z^{n-2} + \dots + c_n(0) = (Z-a)^sL(Z)$$
with $L(a)$ invertible. Similarly, we can decompose each $A_i$ further into $B_1, \dots$ such that each $B_i$ splits each $R_{lt}$ for all $l,t$. Let the family $\mathcal{F}=\{b_{i_1,...,i_l}\mid 0 < l \leq m, 0 < i \leq m_i\}$ be the image of the family $\{a_{i_1,...,i_l}\mid 0 < l \leq n, 0 < i \leq m_i\}$ in $B_i$. Then $B_i$ splits $R$ with $\mathcal{F}$ as the family of elements of $B_i$ satisfying Definition \ref{splitdef}. But then $F(b_{i_1},...,b_{i_1,...,i_n}, X,Y)$ factors linearly over $B_i$. For some subfamily $\{c_{i_1},...,c_{i_1,...,i_l} \mid 0 < l \leq n, 0 < i_j \leq \bar{m}_j \leq m_j\}\subset \mathcal{F}$ of elements in $B_i$ we have that $B_i$ splits $R_{lt}$. Thus $F(c_{i_1},...,c_{i_1,...,i_n}, X,Y)$ factors linearly over $B_i$ for all $c_{i_1},...,c_{i_1,...,i_n}$ in the family. By Lemma \ref{splitlemma} we have that $q(c_{i_1},...,c_{i_1,...,i_n},Z)$ factors linearly over $B_i$ for all $c_{i_1},...,c_{i_1,...,i_n}$. Thus $B_i$ splits the extension $R_{lt}[a]$. But then by Lemma \ref{decompsplitsplit} we have that $A$ splits $R_{lt}[a]$.
Using Hensel's Lemma \ref{hensel}, we can lift this to a proper decomposition 
$Z^n + c_2(T) Z^{n-2} + \dots + c_n(T) = G_1(T,Z)H_1(T,Z)$ with $G_1(T,Z)$ monic of degree $t$ and
$H_1(T,Z)$ monic of degree $u$. We take
$G(T,Y) = T^{tp}G_1(T,Y/T^p)$ and $H(T,Y) = T^{up}H_1(T,Y/T^p).$
\end{proof}
 
 We can then extend Theorem \ref{abhyankarproofdyn} as follows.
 
\begin{thm}\label{dynamicnewton-extended}
Let $F(X,Y) = Y^n + \sum_{i=1}^n \alpha_i(X) Y^{n-i}\in K[[X]][Y]$ be a
monic non-constant polynomial separable over $K((X))$. There exists then a triangular separable algebra $R$ over $K$ 
and $m>0$ and a factorization
\begin{equation*}
F(T^m,Y)= \prod_{i=1}^{n} \big(Y - \eta_i\big)\;\;\;\;\;\;\;\; \eta_i \in R[[T]]
\end{equation*}
Moreover, if $A$ is a triangular separable algebra over $K$ such that $F(X,Y)$ factors linearly over $A[[X^{1/s}]]$ for some positive integer $s$ then $A$ splits $R$.
\end{thm}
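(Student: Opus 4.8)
The plan is to prove both assertions at once, by induction on $n=\deg_Y F$, in the form needed for the recursion: for an arbitrary triangular separable $K$--algebra $R_0$ and a monic $F$ of degree $n$ in $R_0[[X]][Y]$ admitting a relation $PF+QF_Y=\gamma$ with $P,Q\in R_0[[X]][Y]$ and $\gamma\h0$ in $K[[X]]$, the algorithm produces, through its choices, a triangular separable algebra $R$, an $m>0$ and a factorization $F(T^m,Y)=\prod_{i=1}^n(Y-\eta_i)$ over $R[[T]]$; and moreover, if a triangular separable algebra $A$ splits $R_0$ and, for every leaf homomorphism $R_0\to A$ of the splitting tree, the image of $F$ factors linearly over $A[[X^{1/s}]]$ for some $s>0$, then $A$ splits $R$. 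The theorem is the case $R_0=K$: the tree is trivial, every $A$ splits $K$, and separability of $F$ over $K((X))$ furnishes the relation after clearing denominators.

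For $n=1$ take $R=R_0$, $m=1$, $\eta_1=-\alpha_1$; the conclusion ``$A$ splits $R$'' is then literally the hypothesis. For $n\geq2$, apply Lemma \ref{keylemmadynext} to $F$ over $R_0$: we get a decomposition of $R_0$, and after selecting one piece together with a separable extension $S=R_0'[b],q$ a proper factorization $F(T^{m_1},Y)=G_1H_1$ over $S[[T]][Y]$ with $\deg_Y G_1,\deg_Y H_1<n$, as well as the implication that if $A$ splits $R_0$ and the images of $F$ factor linearly over $A[[X^{1/s}]]$ then $A$ splits $S$ (one uses Lemma \ref{splitpersist} to decompose $A$ compatibly with the chosen refinement $R_0'$, and Lemma \ref{decompsplitsplit} to reassemble). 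Both $G_1$ and $H_1$ carry a relation of the same shape over $S[[T]]$ --- obtained from that of $F$ by the product rule, with $\gamma$ still in $K[[T]]$ --- and have $Y$--degree $<n$, so the inductive hypothesis applied first to $G_1$ over $S$, then to $H_1$ over the algebra so obtained, yields a triangular separable $R$ over which, after passing to a common root of $T$, $F(T^m,Y)$ factors linearly. This proves the existence part.

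For the ``moreover'' suppose $A$ splits $R_0$ and the images of $F$ factor linearly over $A[[X^{1/s}]]$; then $A$ splits $S$ by the above, so there are homomorphisms $\varphi\colon S\to A$ fixing $K$. To run the inductive ``moreover'' for $G_1$ over $S$ we must check that $\varphi(G_1)$ factors linearly over $A[[U]]$ for a suitable root $U$ of $X$. Put $v=\mathrm{lcm}(s,m_1)$, $U^v=X$. Since $\varphi$ fixes $K$, $\varphi(F)=F$, so over $A[[U]]$ we have both $F(U^v,Y)=\prod_i(Y-\theta_i)$ with $\theta_i\in A[[U]]$ (lifting the given factorization) and $F(U^v,Y)=\varphi(G_1)(U^{v/m_1},Y)\,\varphi(H_1)(U^{v/m_1},Y)$; hence $\varphi(G_1)(U^{v/m_1},Y)$ is a monic factor of $\prod_i(Y-\theta_i)$. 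Now $F(U^v,Y)$ is separable over $K((U))$, so its discriminant is $U^{2e}$ times a unit of $K[[U]]$; a finite $\gcd$--driven decomposition of $A$ makes the first $e+1$ coefficients of each $\theta_i-\theta_j$ zero or units, and the order estimate then forces each $\theta_i-\theta_j$ to be a power of $U$ times a unit --- the $\theta_i$ become pairwise comparable, so $\prod_i(Y-\theta_i)$ is separable over the localization of $A[[U]]$ at $U$. A further finite decomposition (the idempotent deciding whether $(Y-\theta_i)$ divides $\varphi(G_1)$ lies in the relevant piece, idempotents of $A'[[U]]$ being those of $A'$) then gives $\varphi(G_1)(U^{v/m_1},Y)=\prod_{i\in I}(Y-\theta_i)$ piecewise, which is the desired linear factorization of $\varphi(G_1)$ over $A[[U]]$ after finite decomposition; Lemma \ref{decompsplitsplit} lets us work piecewise. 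The inductive ``moreover'' now gives that $A$ splits the algebra produced from $G_1$; the same descent applied to $H_1$, and a second application of the inductive ``moreover'', together with Corollary \ref{splitalgsplitrefinement} to transport the splitting to the refinement used in the $H_1$--step, give that $A$ splits $R$.

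The main obstacle is this descent: that a linear factorization of $F$ over $A[[X^{1/s}]]$ is inherited by the monic factors $G_1,H_1$ that Lemma \ref{keylemmadynext} produces. Over a field it is just the uniqueness of the factorization of a polynomial separable over $K((U))$ into its pairwise distinct --- hence coprime --- linear factors; over a triangular separable algebra it has to be achieved, with no factorization algorithm at hand, by finitely many $\gcd$--decompositions of $A$, using Lemma \ref{decompsplitdiv} at the level of constant terms and the order estimate of Lemma \ref{orderlemma} (as in the proof of Lemma \ref{splitlemma}) to see that the candidate roots already lie in $A[[U]]$, the pieces then being glued by Lemmas \ref{decompsplitsplit} and \ref{splitpersist}. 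Everything else is bookkeeping of nested decompositions and of the homomorphism tree of Definition \ref{splitdef}, for which those lemmas and Corollary \ref{splitalgsplitrefinement} are exactly the tools provided.
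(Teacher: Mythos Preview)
Your architecture is the paper's own: the paper gives no proof of Theorem~\ref{dynamicnewton-extended}, treating it as the direct iteration of Lemma~\ref{keylemmadynext} just as Theorem~\ref{abhyankarproofdyn} iterates Lemma~\ref{keylemmadyn}. You run that same induction and make explicit the one step the paper suppresses --- the \emph{descent} showing that the factors $G_1,H_1$ produced by one application of Lemma~\ref{keylemmadynext} again satisfy the linear-factorization hypothesis of its ``moreover'' clause, so that the lemma may be re-applied. This is a genuine obligation, and your identification of it as the main obstacle is exactly right.

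Your descent argument is sound in outline but two statements need repair. In the inductive step $F$ lies in $R_0[[X]][Y]$, so ``$\varphi(F)=F$'' and ``$F(U^v,Y)$ separable over $K((U))$'' fail once $R_0\neq K$; you should write $\psi(F)$ for $\psi=\varphi|_{R_0}$ and invoke not separability over $K((U))$ but the B\'ezout identity $\psi(P)\,\psi(F)+\psi(Q)\,\psi(F)_Y=\gamma$ with $\gamma\in K[[U]]$, which you did carry through the recursion. Likewise the discriminant of $\psi(F)$ need not lie in $K[[U]]$; the bound you actually need comes from evaluating that identity at $Y=\theta_i$, giving $\psi(Q)(\theta_i)\prod_{j\ne i}(\theta_i-\theta_j)=\gamma=U^e\cdot(\text{unit})$, which is what forces --- after your finite decomposition of $A$ on the first $e{+}1$ coefficients of each difference --- every $\theta_i-\theta_j$ to be $U^{k_{ij}}$ times a unit on each piece (else the product above would have order $>e$). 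From there your idempotent argument works: $\varphi(G_1)(\theta_i)\,\varphi(H_1)(\theta_i)=0$ together with the same B\'ezout identity (via $F_Y=G_1'H_1+G_1H_1'$) produces an idempotent of $A'((U))$, hence of $A'$, separating the two cases, and a degree count then gives $\varphi(G_1)=\prod_{i\in I}(Y-\theta_i)$ on each piece. With these corrections your proof is complete and in fact more detailed than what the paper writes down.
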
 
 
As we shall see in the examples below, the result of the computation is usually several triangular separable algebras $R_1,...$ over the base field $K$ with linear factorizations of $F$ over $R_i[[X^{1/r}]],...$ for some $r \in \mathbb{Z}^+$. The previous theorem allows us to state the following about these algebras. 

\begin{cor}\label{spliteachother}
Let $A$ and $B$ be two triangular separable algebras obtained by the algorithm of Theorem \ref{abhyankarproofdyn}. Then $A$ splits $B$ and $B$ splits $A$. Consequently, a triangular separable algebra obtained by this algorithm splits itself.
\end{cor}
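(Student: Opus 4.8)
The plan is to read this off the ``moreover'' clause of Theorem \ref{dynamicnewton-extended}. First I would note that whenever a run of the algorithm terminates it returns a triangular separable $K$--algebra $R$ together with an $m>0$ and a factorization $F(T^m,Y)=\prod_{i=1}^n(Y-\eta_i)$ with $\eta_i\in R[[T]]$; substituting $T=X^{1/m}$, this says exactly that $F(X,Y)$ factors linearly over $R[[X^{1/m}]]$. Hence every algebra output by the algorithm is a triangular separable $K$--algebra over which $F$ acquires a linear factorization after a finite fractional extension in $X$.

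Now let $A$ and $B$ be obtained by two (possibly different) runs, with attached integers $m_A$ and $m_B$. Applying the ``moreover'' part of Theorem \ref{dynamicnewton-extended} to the run that produced $B$: since $A$ is a triangular separable $K$--algebra and $F$ factors linearly over $A[[X^{1/m_A}]]$, we conclude that $A$ splits $B$. Exchanging the roles of $A$ and $B$ and applying the same statement to the run that produced $A$ gives that $B$ splits $A$. The final assertion is the special case $B=A$ of this argument: $A$ is itself an algebra obtained by the algorithm and $F$ factors linearly over $A[[X^{1/m_A}]]$, so $A$ splits $A$.

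The one point that needs care is that the splitting conclusion must be available for the algebra that a given execution actually returns, not merely for an algebra whose existence is asserted abstractly in Theorem \ref{dynamicnewton-extended}. This is precisely what the inductive construction underlying that theorem provides: at each step Lemma \ref{keylemmadynext} is invoked, and it propagates the hypothesis ``$A$ splits the current algebra and $F$ factors linearly over the corresponding ring of fractional power series'' through the decomposition of the base algebra and through the separable extension $S_i = R_i[b]$, using Lemmas \ref{splitlemma}, \ref{decompsplitsplit} and Corollary \ref{splitalgsplitrefinement}. Accordingly I would phrase the proof as a direct appeal to this uniform version of the statement, with no further computation required.
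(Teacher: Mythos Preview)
Your argument is correct and is exactly the intended one: the corollary is stated in the paper without a separate proof precisely because it is read off from the ``moreover'' clause of Theorem~\ref{dynamicnewton-extended}, applied symmetrically to the two outputs $A$ and $B$. Your additional remark that the splitting conclusion must hold for \emph{every} algebra the algorithm can return (and that this is guaranteed by the inductive use of Lemma~\ref{keylemmadynext}) is the right observation and matches how the paper sets things up.
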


Thus given any two algebras $R_1$and $R_2$ obtained by the algorithm and two prime ideals $P_1 \in \spec (R_1)$ and $P_2 \in \spec(R_2)$ we have a field isomorphism $R_1/P_1 \cong R_2/P_2$. Therefore all the algebras obtained are approximations of the same field $L$. Since $L$ splits all the algebras and itself is a refinement, $L$ splits itself, i.e. $L \otimes_K L \cong L^{[L:K]}$ and $L$ is a normal, in fact a Galois extension of $K$. 

Classically, this field $L$ is the field of constants generated over $K$ by the set of coefficients of the Puiseux expansions of $F$. The set of Puiseux expansions of $F$ is closed under the action of $\text{Gal}(\bar{K}/K)$, where $\bar{K}$ is the algebraic closure of $K$. Thus the field of constants generated by the coefficients of the expansions of $F$ is a Galois extension. The algebras generated by our algorithm are powers of this field of constants, hence are in some sense minimal extensions.

Even without the notion of prime ideals we can still show interesting relationship between the algebras produced by the algorithm of Theorem \ref{abhyankarproofdyn}. The plan is to show that any two such algebras $A$ and $B$ are essentially isomorphic in the sense that each of them is equal to the power of some common triangular separable algebra $R$, i.e. $A \cong R^m$ and $B \cong R^n$ for some positive integers $m,n$. To show that $A \cong R^m$ we have to be able to decompose $A$. To do this we need to constructively obtain a system of orthogonal nontrivial (unless $A \cong R$ already) idempotents $e_1,...,e_m$. Since $A$ and $B$ split each other, the composition of these maps gives a homomorphism from $A$ to itself. We know that a homomorphism between a field and itself is an automorphism thus as we would expect if there is a homomorphism from a triangular separable algebra $A$ to itself that is not an automorphism we can decompose this algebra non trivially. We use the composition of the split maps from $A$ to $B$ and vice versa as our homomorphism this will enable us to repeat the process after the initial decomposition, that is if $A/e_1, B/e_2$ are algebras in the decompositions of $A$ and $B$, respectively, we know that they split each other. This process of decomposition stops once we reach the common algebra $R$.

\begin{lem}
\label{dynamicnormal1}
Let $A$ be a triangular separable algebra over a field $K$ and let $\pi:A\rightarrow A$ be $K$--homomorphism. Then $\pi$ is either an automorphism of $A$ or we can find a non-trivial decomposition $A \cong A_1 \times ... \times A_t$.
\end{lem}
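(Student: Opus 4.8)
The plan is to leverage that $A$ is finite--dimensional as a $K$--vector space. Since each $p_i$ is monic, $A=K[a_1,\dots,a_n]$ is free over $K$ of rank $\prod_{i=1}^n\deg(p_i)$, with the monomials $a_1^{e_1}\cdots a_n^{e_n}$, $0\le e_i<\deg(p_i)$, as an explicit basis, so a $K$--homomorphism $\pi\colon A\to A$ is in particular a $K$--linear endomorphism of this space. Hence $\pi$ is injective if and only if it is surjective if and only if it is bijective, and a bijective ring homomorphism is an isomorphism. So the first step is the (effective, via a rank computation) case distinction: either $\pi$ is injective, in which case it is a $K$--algebra automorphism of $A$ and we are done, or $\ker\pi\neq 0$.

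In the second case I would extract a nontrivial idempotent from $\ker\pi$. Solving the linear system $\pi(x)=0$ in the chosen basis exhibits $\ker\pi$ as a finitely generated ideal. By Lemma~\ref{extension}, applied successively along the tower $K\subseteq K[a_1]\subseteq\cdots\subseteq A$, the ring $A$ is regular, and hence every finitely generated ideal of $A$ is generated by an idempotent; write $\ker\pi=\ideal{e}$ with $e^2=e$. Since $\ker\pi\neq 0$ we have $e\neq 0$, and since $\pi(1)=1\neq0$ we have $1\notin\ker\pi$, so $e\neq1$. Thus $e$ is a nontrivial idempotent and
\[ A\;\cong\;A/\ideal{e}\times A/\ideal{1-e}, \]
a product of two nonzero rings.

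What remains, and what I expect to need the most care, is checking that the two factors are again triangular separable $K$--algebras, so that this really is a ``decomposition'' in the sense introduced earlier. I would obtain this by following how $e$ splits along the tower: intersecting $\ideal{e}$ with $K[a_1]=K[X]/\ideal{p_1}$ gives an idempotent of $K[a_1]$, i.e.\ a factorization of $p_1$ into two coprime monic factors, which remain separable by Lemma~\ref{sepdivisorsep}; on each branch the image of $e$ induces in the same way a factorization of the image of $p_2$, and so on down the tower. This is precisely the iterated construction of a decomposition of a triangular separable algebra described above, so both $A/\ideal{e}$ and $A/\ideal{1-e}$ inherit a triangular separable structure (they are refinements of $A$). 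The whole argument is constructive: the case split is a finite linear--algebra computation, and the idempotent generating $\ker\pi$ is produced effectively by the gcd computations underlying Lemma~\ref{extension}.
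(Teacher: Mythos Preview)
Your core argument matches the paper's: both use the finite $K$--dimension of $A$ (with the explicit monomial basis) to turn the question into linear algebra, yielding a decidable dichotomy between $\pi$ being bijective and $\ker\pi$ containing a nonzero element, which is automatically a non-unit since $\pi(1)=1$. The paper's proof stops essentially there, appealing implicitly to the earlier discussion (after Lemma~\ref{extension}) that a nonzero non-unit in a triangular separable algebra produces a decomposition into triangular separable algebras. Your explicit passage through regularity and a nontrivial idempotent is a perfectly good way to phrase that same step.

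The one slip is in your final paragraph, where you propose to recover the triangular structure on the factors by first intersecting $\ideal{e}$ with $K[a_1]$. That intersection can well be $\{0\}$: take $p_1$ irreducible (so $K[a_1]$ is a field) and let the idempotent live entirely at a higher level of the tower; then $p_1$ does not factor at all, yet $A$ still decomposes. The correct direction is top-down rather than bottom-up: write your nonzero kernel element (or $e$) as $q(a_n)$ with $q\in K[a_1,\dots,a_{n-1}][X]$ and compute $\gcd(p_n,q)$ over the regular ring $K[a_1,\dots,a_{n-1}]$. That gcd computation may itself branch $K[a_1,\dots,a_{n-1}]$ into triangular pieces, and on each piece you get a monic factorization of $p_n$, hence a refinement of $A$. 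This is exactly the mechanism in the proof of Lemma~\ref{extension} and the paragraph following it, and it is what the paper is tacitly invoking.
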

\begin{proof}
Let $A = K[a_1,...,a_l],p_1,...,p_l$ with $\deg(p_i) = n_i, 0<i\leq l$. Let $\pi$ map $a_i$ to $\bar{a}_i$, for $0 < i \leq l$. Then $\bar{a}_i$ is a root of $\pi(p_i) =p_i(\bar{a}_1,...,\bar{a}_{i-1},X)$. The set of vectors $\mathcal{S}=\{a_1^{i_1}...a_l^{i_l} \mid 0 \leq i_j < n_j, 0< j \leq l\}$ is a basis for the vector space $A$ over $K$. If the image $\pi(\mathcal{S}) =\{\bar{a}_1^{i_1}...\bar{a}_l^{i_l} \mid 0 \leq i_j < n_j, 0< j \leq l\}$ is a basis for $A$, i.e. $\pi(\mathcal{S})$ is a linearly independent set then $\pi$ is surjective and thus an automorphism. \\
Assuming $\pi$ is not an automorphism, then the kernel of $\pi$ is non-trivial, i.e. we have a non-zero non-unit element in $\ker \pi$, thus we have a non-trivial decomposition of $A$. 
\end{proof}

\begin{thm}
\label{dynamicnormal2}
Let $A,B$ be triangular separable algebras over a field $K$ such that $A$ splits $B$ and $B$ splits $A$. Then there exist a triangular separable algebra $R$ over $K$ and two positive integers $m,n$ such that $A\cong R^n$ and $B\cong R^m$.
\end{thm}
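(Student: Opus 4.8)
The plan is to proceed by induction on $[A:K]$, using the composite homomorphism machinery suggested in the text together with Lemma \ref{dynamicnormal1}. Since $A$ splits $B$ and $B$ splits $A$, we obtain in particular (after choosing one branch of the splitting tree from Definition \ref{splitdef}) a $K$--homomorphism $\varphi: B \to A$ and a $K$--homomorphism $\psi: A \to B$; their composite $\pi = \varphi\circ\psi : A \to A$ is a $K$--endomorphism. By Lemma \ref{dynamicnormal1}, either $\pi$ is an automorphism of $A$, or we obtain a non-trivial decomposition $A \cong A_1\times\cdots\times A_t$.

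First I would treat the base case, where $\pi$ is an automorphism. Then $\psi: A\to B$ is injective (as $\varphi\circ\psi$ is), and symmetrically, running the same argument with the roles of $A$ and $B$ exchanged using $\psi\circ\varphi: B\to B$, we get that either $\psi\circ\varphi$ is an automorphism of $B$ — in which case $\varphi$ is injective as well, both $\varphi$ and $\psi$ are injective $K$--linear maps between finite-dimensional $K$--vector spaces so $[A:K]=[B:K]$ and $\psi$ is an isomorphism, giving $A\cong B$ and we may take $R=A$, $m=n=1$ — or $\psi\circ\varphi$ is not an automorphism, in which case $B$ decomposes non-trivially and we are in the inductive case for $B$. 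So the genuinely inductive situation is: at least one of $A$, $B$ decomposes non-trivially, say $A\cong A_1\times\cdots\times A_t$ with each $A_i$ a triangular separable algebra of strictly smaller dimension. By the remark before Lemma \ref{splitpersist} (that a quotient of a splitting algebra still splits), each $A_i$ splits $B$; and since $A$ splits $B$, applying Lemma \ref{splitpersist} to a suitable refinement — or more directly, since each $A_i$ is a quotient of $A$ — we want each $A_i$ to still stand in the mutual-splitting relation with appropriate pieces of $B$. Concretely: $B$ splits $A\cong\prod A_i$; I would argue that the splitting data of $B$ over $A$ distributes over the factors, so that $B$ splits each $A_i$, and hence $A_i$ and $B$ split each other. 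By the induction hypothesis applied to the pair $(A_i,B)$, there is a triangular separable $R_i$ with $A_i\cong R_i^{a_i}$ and $B\cong R_i^{b_i}$.

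The main obstacle — and the step I would spend the most care on — is reconciling the various $R_i$: a priori the induction gives $B\cong R_1^{b_1}\cong R_2^{b_2}\cong\cdots$, and I must show all the $R_i$ are isomorphic (to a common $R$) and then assemble $A\cong\prod_i R_i^{a_i}\cong R^{\sum a_i}$. For this I would use uniqueness of the decomposition of a triangular separable algebra into connected (field) pieces: since $B$ is a fixed triangular separable algebra, its decomposition into indecomposable factors is unique up to isomorphism and permutation, so $R_i^{b_i}\cong B$ forces every indecomposable factor of $R_i$ to be isomorphic to a fixed field $L$ (the common connected component of $B$), whence each $R_i$ is itself a power of $L$; replacing $R$ by $L$ throughout then gives $A\cong L^n$ and $B\cong L^m$ with $n=\sum_i a_i[R_i:L]$ and $m=b_i[R_i:L]$ for any $i$. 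The delicate constructive point is that "indecomposable" and "unique decomposition" must be handled via explicit idempotents rather than via a classical structure theorem; but since every triangular separable algebra over $K$ is regular (Lemma \ref{extension}) and finite-dimensional, one can constructively split off idempotents until each factor is a field, and the multiset of resulting fields is determined (up to $K$--isomorphism) by the algebra — this is what makes the matching of the $R_i$ go through. I would also need the auxiliary observation, used implicitly above, that if $C\times D$ splits $B$ then both $C$ and $D$ split $B$, which is immediate from Definition \ref{splitdef} since $C$ and $D$ are quotients of $C\times D$.
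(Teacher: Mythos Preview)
Your overall strategy --- compose the two splitting homomorphisms to get a $K$--endomorphism, invoke Lemma \ref{dynamicnormal1}, and induct --- is exactly the paper's, and your base case together with the observation that after decomposing $A\cong A_1\times\cdots\times A_t$ each $A_i$ and $B$ still split one another are correct and match the ingredients the paper records (the remark before Lemma \ref{splitpersist} and Corollary \ref{splitalgsplitrefinement}).

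The genuine gap is your reconciliation step. Having obtained $B\cong R_i^{b_i}$ separately for each $i$, you propose to match up the $R_i$ by decomposing $B$ into a product of fields and invoking uniqueness of that decomposition. But decomposing a triangular separable algebra into fields amounts to deciding irreducibility of its defining polynomials over $K$ --- precisely the operation the entire paper is designed to avoid. Constructively, given $K[X]/\ideal{p}$ with $p$ separable, one cannot in general produce a nontrivial idempotent (or certify that none exists) without a factorization algorithm for $K[X]$; so ``split off idempotents until each factor is a field'' is not a terminating procedure in this framework, and ``the multiset of resulting fields is determined'' is not a statement you can cash out. Your parenthetical that this is ``the delicate constructive point'' is exactly right, but the resolution you sketch reintroduces the very hypothesis the dynamic method eliminates.

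The paper's proof differs structurally: it inducts on $\dim A + \dim B$ rather than on $[A:K]$, and it never attempts an after-the-fact reconciliation of independently obtained $R_i$'s. Once one of $A$ or $B$ decomposes nontrivially, the sum $\dim A+\dim B$ strictly drops and the inductive hypothesis is reapplied to the smaller configuration directly, so a single $R$ emerges from the recursion rather than being reassembled from field components. The paper is admittedly terse at this point, but the choice of inductive measure is the substantive difference, and your explicit appeal to a field decomposition of $B$ is where your version leaves the constructive setting.
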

\begin{proof}
First we note that by Corollary \ref{splitalgsplitrefinement} if $A$ splits $B$ then $A$ splits any refinement of $B$. Trivially if $A$ splits $B$ then any refinement of $A$ splits $B$. 
Since $A$ and $B$ split each other then there is $K$--homomorphisms $\vartheta : B \rightarrow A$ and $\varphi : A \rightarrow B$. The maps $\pi = \vartheta \circ \varphi$ and $\varepsilon = \varphi \circ \vartheta$ are $K$--homomorphisms from $A$ to $A$ and $B$ to $B$ respectively. If both $\pi$ and $\varepsilon$ are automorphisms then we are done. Otherwise, by Lemma \ref{dynamicnormal1} we can find a decomposition of either $A$ or $B$. By induction on $\dim(A) + \dim(B)$ the statement follows.
\end{proof}
Theorems \ref{dynamicnormal2} and \ref{dynamicnewton-extended} show that the algebras obtained by the algorithm of Theorem \ref{abhyankarproofdyn} are equal to the power of some common algebra. This common triangular separable algebra is an approximation, for lack of irreducibility test for polynomials, of the normal field extension of $K$ generated by the coefficients of the Puiseux expansions $\eta_i \in \bar{K}[[X^{1/m}]]$ of $F$, where $\bar{K}$ is the algebraic closure of $K$.

The following are examples from a Haskell implementation of the algorithm. We truncate the different factors unevenly for readability.

\begin{exmp}
Applying the algorithm to $F(X,Y) = Y^4 - 3 Y^2 + X Y + X^2 \in Q[X][Y]$ we get.
\begin{align*}
&\bullet\;Q[a,b,c],a=0,\; b^2-13/36=0,\; c^2-3=0  \\
& F(X,Y) =\\
& \;\;\; (Y+(-b-1/6)X+(-31b/351-7/162)X^3+(-415b/41067-29/1458)X^5+...) \\
& \;\;\;   (Y+(b-1/6)X+(31b/351-7/162)X^3+(1415b/41067-29/1458)X^5+...) \\
&\;\;\;   (Y-c+X/6+5cX^2/72+7X^3/162+185cX^4/10368+29X^5/1458+...)\\
&\;\;\;   (Y+c+X/6-5cX^2/72+7X^3/162-185cX^4/10368+29X^5/1458+...)
\end{align*}
\begin{align*}
&\bullet\;Q[a,b,c], a^2-3=0,b-a/3=0,c^2-13/36=0\\
&F(X,Y)=\\
&\;\;\; (Y-a+X/6+5aX^2/72+7X^3/162+185aX^4/10368+29X^5/1458+...) \\
&\;\;\;  (Y+(-c-1/6)X+(-31c/351-7/162)X^3+(-415c/41067-29/1458)X^5+...)\\
&\;\;\;  (Y+(c-1/6)X+(31c/351-7/162)X^3+(1415c/41067-29/1458)X^5+...)\\
&\;\;\;  (Y+a+X/6-5aX^2/72+7X^3/162-185aX^4/10368+29X^5/1458+...) 
\end{align*}
\begin{align*}
&\bullet\;Q[a,b,c], a^2-3=0,b+2a/3=0,c^2-13/36=0\\
&F(X,Y) = \\
&\;\;\;(Y-a+X/6+5aX^2/72+7X^3/162+185aX^4/10368+29X^5/1458+...)\\
&\;\;\; (Y+a+X/6-5aX^2/72+7X^3/162-185aX^4/10368+29X^5/1458+...)\\
&\;\;\; (Y+(-c-1/6)X+(-31c/351-7/162)X^3+(-415c/41067-29/1458)X^5+...) \\
&\;\;\; (Y+(c-1/6)X+(31c/351-7/162)X^3+(1415c/41067-29/1458)X^5+...)
\end{align*}
\end{exmp}
The algebras in the above example can be readily seen to be isomorphic. However, as we will show next, this is not always the case.
\begin{exmp}
To illustrate Theorem \ref{dynamicnormal2} we show how it works in the context of an example computation. The polynomial is $ F(X,Y) = Y^6 + X^6 + 3 X^2 Y^4 + 3 X^4 Y^2 - 4 X^2 Y^2$. The following are two of the several triangular separable algebras obtained by our algorithm along with their respective factorization of $F(X,Y)$.

\begin{align*}
& A=Q[a,b,c,d,e], p_1,p_2,p_3,p_5\\
& p_1=Y^4-4, \; p_2 = Y-a/5, \; p_3 = Y^2-1/4,\\
& p_4 = Y^3+2 a^2Y/3+20 a^3/27,\; p_5 = Y^2+3 d^2/4+2 a^2/3 \\
& F(X,Y) = \big (Y-aX^{\tfrac{1}{2}}+3a^3X^{\tfrac{3}{2}}/16 + ...\big) \big(Y-cX^2+ ...\big) \big(Y+cX^2+ ...\big) \\ 
& \;\;\;\;  \big (Y+(-d+a/3)X^{\tfrac{1}{2}}+(-3ad^2/16-a^2d/16-7a^3/48)X^{\tfrac{3}{2}}+ ...\big) \\
& \;\;\;\; \big (Y+(-e+d/2+a/3)X^{\tfrac{1}{2}}+\\
& \;\;\;\;\;\;\;\;\;\;\;\;(3ade/16-a^2e/16+3ad^2/32+a^2d/32-a^3/48)X^{\tfrac{3}{2}}+ ...\big) \\
& \;\;\;\;  \big (Y+(e+d/2+a/3)X^{\tfrac{1}{2}}+\\
& \;\;\;\;\;\;\;\;\;\;\;\;(-3ade/16+a^2e/16+3ad^2/32+a^2d/32-a^3/48)X^{\tfrac{3}{2}}+ ...\big) \\
\\
& B=Q[r,t,u,v,w], q_1,q_2,q_3,q_5\\
& q_1= Y^4-4,\; q_2= Y+4r/5,\; q_3= Y,\;q_4=Y^2-1/4,\; q_5=Y^2+r^2 \\
& F(X,Y) = (Y-rX^{\tfrac{1}{2}}+3r^3X^{\tfrac{3}{2}}/16+ ...) (Y+rX^{\tfrac{1}{2}}-3r^3X^{\tfrac{3}{2}}/16+ ...) \\
&\;\;\;\; (Y-vX^2+ ...) (Y+vX^2+ ...) \\
&\;\;\;\; (Y-wX^{\tfrac{1}{2}}-3r^2wX^{\tfrac{3}{2}}/16+ ...) (Y+wX^{\tfrac{1}{2}}+3r^2wX^{\tfrac{3}{2}}/16+ ...)
\end{align*}

We now show that the two algebras indeed split each other. Over $B$ the polynomial $p_1$ factors as $p_1 = (Y-r) (Y+r) (Y-w) (Y+w)$. Each of these factors partly specify a homomorphism taking $a$ to a zero of $p_1$ in $B$. For each we get a factorization of $p_4$ over $B$.

\begin{itemize}
\item $a \mapsto r$\\
$p_4 = (Y + 2r/3) (Y - w - r/3) (Y + w - r/3)$
\item $a \mapsto - r$\\
$p_4 = (Y - 2r/3) (Y - w + r/3) (Y + w + r/3)$
\item $a \mapsto w$\\
$p_4 = (Y - r - w/3) (Y+ r - w/3) (Y +2w/3)$
\item $a \mapsto -w$\\
$p_4 = (Y - r + w/3) (Y + r + w/3) (Y - 2w/3)$
\end{itemize}
For each of the $4$ mappings of $a$ we get $3$ mappings of $d$. Now we see we have $12$ different mappings arising from the different mappings of $a$ and $d$. Each of these $12$ mappings will give rise to $2$ different mappings of $e$ (factorization of $p_5$)...etc. Thus we have a number of homomorphisms equal to the dimension of the algebra, that is $48$ homomorphisms. We avoid listing all these homomorphisms here. In conclusion, we see that $B$ splits $A$. Similarly, we have that $A$ splits $B$. We show only one of the $16$ homomorphisms below.
The polynomial $q_1$ factors linearly over $A$ as $q_1 = (Y - a) (Y-d + a/3) (Y - e + d/2 + a/3) (Y + e + d/2 + a/3)$. Under the map $r \mapsto a$ we get a factorization of $q_5$ over $A$ as 
\begin{align*}
q_5 = Y^2 + a^2 =& (Y-a^2d^2e/8+a^3de/12-5e/9-a^3d^2/8-2d/3-2a/9) \\
& (Y + a^2d^2e/8-a^3de/12+5e/9+a^3d^2/8+2d/3+2a/9)
\end{align*}

Now to the application of Theorem \ref{dynamicnormal2}. Under the map above we have an endomorphism $a\mapsto r \mapsto a$ and $d \mapsto -2 r/3 \mapsto -2a/3$. Thus in the kernel we have the non-zero element $d+2a/3$ and as expected $Y+2a/3$ divides $p_4$. Using this we obtain a decomposition of $A \cong A_1 \times A_2$. We have $A_1 = Q[a,b,c,d,e],p_1, p_2,p_3,g_4,p_5$ with $g_4 = Y+2a/3$ and $A_2 = Q[a,b,c,d,e],p_1, p_2,p_3,h_4,p_5$ with $h_4 = Y^2-2aY/3+10a^2/9$.

With $d+2a/3 = 0$ in $A_1$, $p_5 =  Y^2+3 d^2/4+2 a^2/3 = Y^2 + a^2$ and we can see immediately that $A_1 \cong B$. Similarly, we can decompose the algebra $A_2 \cong C_1 \times C_2$, where $C_1 = Q[a,b,c,d,e],p_1,p_2,p_3,h_4, g_5$ with $g_5 = Y-d/2+2a/3$ and $C_2 = Q[a,b,c,d,e],p_1,p_2,p_3,h_4, h_5$ with $h_5 = Y + d/2 - 2a/3$. The polynomial $q_5$ factors linearly over both $C_1$ and $C_2$ as $q_5 = (Y -d + a/3) (Y +d - a/3)$. We can readily see that both $C_1$ and $C_2$ are isomorphic to $B$, through the $C_1$ automorphism $a\mapsto r \mapsto a, d\mapsto w + r/3 \mapsto d$. Thus proving $A \cong B^3$. 
\end{exmp}

\section{Acknowledgments}
We are grateful to Henri Lombardi for the useful comments and discussions during the work leading to this paper. We thank the referees for comments and suggestions that helped to improve the article.

The research leading to the results presented here has been supported by ERC Advanced grant project 247219.

\bibliographystyle{jloganal}

\end{document}